\documentclass{article}
\usepackage{amsmath}
\usepackage{amssymb}
\usepackage{titlesec}
\usepackage{graphicx}
\usepackage{fancyhdr}
\usepackage{latexsym}
\usepackage{mathrsfs}%
\usepackage{booktabs}
\usepackage{mathrsfs}
\usepackage{mathtools}
\usepackage{amsthm}
\usepackage{wasysym}
\usepackage{cite}
\usepackage{color}
\usepackage{amsfonts}
\usepackage{amsmath,amssymb}
\usepackage{graphics}
\usepackage{graphics}
\usepackage{cite}
\usepackage{latexsym}
\usepackage{amsmath}
\usepackage{amssymb}
\usepackage{epsfig}
\usepackage{amscd}
\usepackage{amsmath, amsthm}
\usepackage[linkcolor=blue,colorlinks=true,urlcolor=red,bookmarksopen=true]{hyperref}
\numberwithin{equation}{section}
\newcommand{\R}{\mathbb{R}}

\newtheorem{theorem}{Theorem}[section]
\newtheorem{lemma}{Lemma}[section]
\newtheorem{remark}{Remark}[section]
\newtheorem{proposition}{Proposition}[section]

\title{Large data global strong solutions to the quantum Navier-Stokes equations with degenerate viscosity}
\author{Yongteng G{\small U}$^{a}$, Xiangdi H{\small UANG}$^{a}$\thanks{Email addresses: guyongteng@amss.ac.cn (Y. T. Gu); xdhuang@amss.ac.cn (X. D. Huang)}, Muxi L{\small EI}$^{a}$   \\  
	{\normalsize a. State Key Laboratory of Mathematical Sciences, Academy of Mathematics and Systems Sciences,}\\
	{\normalsize Chinese Academy of Sciences, Beijing 100190, China;}\\
}
\date{}

\begin{document}
\maketitle
\begin{abstract}
This work is devoted to study the Cauchy problem for the quantum Navier-Stokes equations in $\mathbb R^N$, $N=2,3$, with density dependent viscosities $\mu(\rho)=\rho,\lambda(\rho)=0$, quantum capillarity $\kappa(\rho)=1/\rho$, which was derived by Dunn and Serrin (see Arch. Ration. Mech. Anal. 88(2):95–133, 1985) as a phase transition model.
For arbitrarily large initial data satisfying suitable regularity and decay assumptions, with density positive at every finite point and vanishing at infinity, we prove the existence and uniqueness of global strong solutions without symmetry assumptions. The admissible pressure range is $\gamma\ge 1$ in two dimensions and $1\le\gamma< 8/3$ in three dimensions. On every finite time interval, the density remains pointwise comparable to its initial profile, and both the effective velocity and the logarithmic density gradient remain bounded. The central estimates require no uniform positive lower bound for the density.
When $\gamma>1$, a suitable combination of the density and effective velocity equations allows the maximum principle
to yield an $L^\infty$ bound for the effective velocity
using only an upper bound for the density.
We then introduce the relative density with respect to
a fixed profile determined by the initial data.
In the isothermal case $\gamma=1$, we combine estimates for
this relative density with a scalar inequality involving
the effective velocity and the logarithmic density to
close the coupled estimates and obtain the effective
velocity bound.
Combining the relative-density bounds with weighted
gradient estimates and localized De Giorgi iteration,
we control the logarithmic density gradient and close
the higher-order estimates.
\end{abstract}
{\bf Keywords:} quantum Navier-Stokes equations;  degenerate viscosity; large data global well-posedness; De Giorgi iteration; relative-density estimates.\\[4mm]
		{\bf Mathematics Subject Classifications (2020):} 35D35; 35Q35; 35Q40; 76N10.\\[4mm]
\tableofcontents
\section{Introduction}
We study the global well-posedness of the compressible
Navier--Stokes--Korteweg system in $\mathbb R^N$, with $N=2,3$.
The model describes the dynamics of a compressible fluid subject to both viscous and capillary effects and is governed by
\begin{equation}\label{eq:NSK-system}
\begin{cases}
\partial_t\rho+\nabla\cdot(\rho u)=0,\\[1mm]
\partial_t(\rho u)+\nabla\cdot(\rho u\otimes u)
+\nabla P(\rho)
=\nabla\cdot\mathbb S+\nabla\cdot\mathbb K.
\end{cases}
\end{equation}
Here, $t\ge0$ and $x\in\mathbb R^N$ are the temporal and spatial
variables, respectively. The unknowns $\rho=\rho(t,x)$ and
$u=(u_1,\ldots,u_N)^T$ denote the density and velocity, while the
pressure obeys the law
\[
P(\rho)=\rho^\gamma,\qquad \gamma\ge1.
\]
For a Newtonian fluid, the viscous stress is expressed as
\begin{equation}\label{eq:viscous-stress}
\mathbb S
=2\mu(\rho)\mathcal D(u)
+\lambda(\rho)( \operatorname{div} u)\mathbb I,
\end{equation}
where
\[
\mathcal D(u)=\frac12\bigl(\nabla u+(\nabla u)^T\bigr)
\]
is the symmetric part of the velocity gradient, and $\mathbb I$
is the $N\times N$ identity matrix. The density-dependent viscosity
coefficients $\mu$ and $\lambda$ are assumed to satisfy
\[
\mu(\rho)>0,
\qquad
2\mu(\rho)+N\lambda(\rho)\ge0.
\]
To preserve the BD entropy structure, we impose the following
compatibility condition on the viscosity coefficients:
\[
\lambda(\rho)=2\bigl(\rho\mu'(\rho)-\mu(\rho)\bigr).
\]
The capillary force is represented by the divergence of the
Korteweg stress tensor:
\begin{equation}\label{eq:Korteweg-stress}
\nabla\cdot\mathbb K
=
\nabla\left(
\rho\kappa(\rho)\Delta\rho
+\frac{\kappa(\rho)+\rho\kappa'(\rho)}{2}
|\nabla\rho|^2
\right)
-\nabla\cdot\left(
\kappa(\rho)\nabla\rho\otimes\nabla\rho
\right),
\end{equation}
where the positive function $\kappa(\rho)$ specifies the
density-dependent capillarity coefficient.

The compressible Navier-–Stokes--Korteweg model has its origins in the work of Korteweg \cite{Korteweg}, who introduced capillary forces by means of a stress tensor depending on spatial variations of the density. This idea was subsequently extended by Dunn--Serrin \cite{Dunn-Serrin} , who developed a more general continuum formulation for capillary fluids.

The analysis of the Navier--Stokes--Korteweg system has developed
along several directions. Hattori--Li \cite{Hattori-Li-1, Hattori-Li-2}
established local solvability and global existence for small
perturbations of a positive constant state. Danchin--Desjardins
\cite{Danchin-Desjardins} developed a well--posedness theory in
spaces with critical regularity. For the initial-boundary-value problem, Kotschote
\cite{Kotschote} proved local existence and uniqueness of
strong solutions by maximal regularity and a fixed-point
argument. Haspot \cite{Haspot-large-energy} obtained global
strong solutions in Besov spaces under smallness conditions
that allow certain classes of initial data with large or
infinite energy. Large initial data can also be treated
in several one-dimensional settings. For instance, Charve--Haspot \cite{Charve-Haspot} obtained global strong solutions for
a model with density-dependent viscosity and quantum capillarity,
while Germain--LeFloch \cite{Germain-LeFloch} constructed
finite-energy weak solutions for more general constitutive laws.
Global weak solutions to the one-dimensional periodic problem
were also obtained by Antonelli--Bresch--Spirito
\cite{Antonelli-Bresch-Spirito} under suitable conditions on
the viscosity and capillarity coefficients. The effective velocity formulation was also used by
J\"ungel \cite{Jungel-effective} to derive an entropy identity
and establish global existence for one-dimensional quantum
Navier--Stokes flows with strictly positive density.
More recently, Chen--Zhao \cite{Chen-Zhao} treated the case
of unequal viscosity and dispersion coefficients.
Under suitable assumptions on the coefficients, the pressure
exponent, and the initial data, they obtained global
strong/classical solutions for large initial data and
established a vanishing-dispersion limit.

A closely related line of research concerns compressible
Navier--Stokes equations without capillarity. For viscosity
coefficients satisfying the BD relation, the additional entropy
estimate provides control of density gradients that is not
available from the standard energy inequality alone.
This structure underlies the global weak solution theories
developed by Li--Xin \cite{Li-Xin} and Vasseur--Yu
\cite{Vasseur-Yu} for degenerate viscosities, allowing large
initial data and vacuum. Further extensions to more general
viscosity laws were established by Bresch--Vasseur--Yu
\cite{Bresch-Vasseur-Yu}. These results accommodate vacuum at
the level of weak solutions, but do not by themselves determine
whether initially smooth solutions remain regular.

For large classical solutions, substantial progress has been
made under spherical symmetry. Huang--Meng--Zhang
\cite{Huang-Meng-Zhang-111} established global classical
solutions for the two-dimensional viscous Saint--Venant system
and for related compressible Navier--Stokes models satisfying
the BD relation. Gu--Huang \cite{Gu-Huang} subsequently
extended the shallow-water theory to three dimensions and to
a class of non-isentropic flows with transported entropy.
For the Cauchy problem, Chen--Zhang--Zhu
\cite{Chen-Zhang-Zhu} proved global regularity for arbitrarily
large spherically symmetric data with shallow-water viscosities.
Their results allow the density to approach zero at spatial
infinity while remaining positive at every finite point.

The possibility of vacuum formation in a bounded domain is
illustrated by the recent work of Huang--Li
\cite{Huang-Li}. They considered the three--dimensional
spherically symmetric compressible Navier--Stokes equations
without capillarity on a fixed ball, subject to no-slip
boundary conditions and power-law density-dependent
viscosities. In the range
\[
    1<\alpha<11.7,
    \qquad
    2\alpha-1\leq\gamma<3\alpha-1,
\]
they proved global existence of positive classical solutions
for arbitrarily large compatible data when the initial
radial effective velocity is nonnegative at the outer
boundary. They also constructed a family of smooth,
strictly positive initial densities with negative initial
boundary effective velocity for which the corresponding
solutions break down in finite time. At the terminal time,
the density converges uniformly to a nonnegative profile
whose zero set is precisely the boundary. Thus, even when
the initial density is strictly positive, the evolution
can produce boundary vacuum and a loss of classical
regularity.

For the quantum Navier--Stokes system, recent results have
established global strong solutions for large data without
imposing spherical symmetry. For strong solutions, Haspot \cite{Haspot-quantum} studied
the isothermal Cauchy problem with quantum pressure, using
the effective velocity and a De Giorgi argument to estimate
the inverse density. Huang--Meng--Zhang
\cite[Remark~1.1]{Huang-Meng-Zhang} subsequently pointed out
a difficulty in obtaining the velocity moment estimates
uniformly in the exponent, as required in that argument. Huang--Meng--Zhang
\cite{Huang-Meng-Zhang} treated the periodic problem by
developing a modified Nash--Moser iteration that connects
the effective velocity estimate with control of the inverse
density. This estimate allows them to close the positive
lower bound for the density and continue the strong solution.
Huang--Gu--Lei \cite{Huang-Gu-Lei} addressed the Cauchy
problem with strictly positive far-field density.
Using density truncations and refined iteration arguments,
they obtained global strong solutions for arbitrarily large
initial data when $\gamma\geq1$ in two dimensions and
$1\leq\gamma<8/3$ in three dimensions. Their analysis does
not require symmetry of the initial data. These results have been extended to more general viscosity
and capillarity laws of the form
\begin{equation}\label{eq:general-coefficients}
    \mu(\rho)=\nu\rho^\alpha,\qquad
    \lambda(\rho)=2\nu(\alpha-1)\rho^\alpha,\qquad
    \kappa(\rho)=\varepsilon^2\alpha^2\rho^{2\alpha-3},
    \qquad \nu\geq\varepsilon>0.
\end{equation}
For an admissible range of $\alpha<1$, Gu--Huang--Meng--Zhou
\cite{Gu-Huang-Meng-Zhou} established global strong solutions
to the periodic problem, while Huang--Lei--Zhou
\cite{Huang-Lei-Zhou} treated the corresponding Cauchy problem.
Both results apply in two and three dimensions and allow
arbitrarily large initial data with density bounded away
from zero. Their proofs first establish upper and positive
lower bounds for the density, and then exploit the interaction
between fast diffusion, transport, and the Lam\'e operator
to close the higher-order estimates.

These developments motivate the study of large solutions
with far-field vacuum. The result of Chen--Zhang--Zhu \cite{Chen-Zhang-Zhu}
demonstrates that decay of the density to zero at infinity
need not cause finite-time breakdown, whereas the construction
of Huang--Li \cite{Huang-Li} shows that vacuum formation can terminate
a classical solution in a bounded domain. These conclusions
concern different geometries and constitutive laws, and
neither directly settles the Cauchy problem for the quantum
Navier--Stokes system with general, nonsymmetric data and
far-field vacuum. Moreover, the positive lower bound for
the density used in the work of Huang--Gu--Lei \cite{Huang-Gu-Lei} is
unavailable when the far-field density vanishes.

The question we address is therefore whether strong solutions
of the quantum Navier--Stokes system can persist globally
for arbitrarily large initial data in the presence of
far-field vacuum, or whether the degeneracy associated with
the far-field vacuum can lead to finite-time breakdown.
Our purpose is to establish global existence under the
assumptions stated below, without requiring a uniform
positive lower bound for the density on the whole space.
We consider the shallow-water viscosity coefficients
\[
\mu(\rho)=\rho,\qquad \lambda(\rho)=0,
\]
together with the capillarity law
\[
\kappa(\rho)=\frac{1}{\rho}.
\]
For positive densities, this choice gives
\begin{equation}\label{eq:quantum-capillarity}
\nabla\cdot\mathbb K
=\nabla\cdot\bigl(\rho\nabla^2\log\rho\bigr).
\end{equation}
The governing equations therefore reduce to
\begin{equation}\label{eq:quantum-NS}
\begin{cases}
\partial_t\rho+\nabla\cdot(\rho u)=0,\\[1mm]
\partial_t(\rho u)+\nabla\cdot(\rho u\otimes u)
+\nabla P(\rho)
=\nabla\cdot\bigl(2\rho\mathcal D(u)\bigr)
+\nabla\cdot\bigl(\rho\nabla^2\log\rho\bigr).
\end{cases}
\end{equation}
Our aim is to establish the global existence of strong solutions
to \eqref{eq:quantum-NS} in $\mathbb R^N$, $N=2,3$, in the presence
of far-field vacuum. We prescribe the initial conditions
\begin{equation}\label{eq:initial-conditions}
(\rho,u)(x,0)=(\rho_0,u_0)(x),
\qquad x\in\mathbb R^N,
\end{equation}
and impose
\begin{equation}\label{eq:far-field-vacuum}
\rho(x,t)\to0,\qquad u(x,t)\to0,
\qquad\text{as }|x|\to\infty.
\end{equation}
The density is allowed to vanish at spatial infinity while
remaining positive at every finite spatial point. We next state the global existence result for the Cauchy problem
\eqref{eq:quantum-NS}--\eqref{eq:far-field-vacuum}, allowing
arbitrarily large initial data subject to the assumptions
specified below.

Set
\begin{equation}\label{eq:effective-definitions}
 g=\nabla\log\rho,\qquad v=u+\nabla\log\rho,\qquad
\mathfrak p=\gamma\rho^{\gamma-1}g,\qquad
\Phi=|g|^2-v\cdot g,
\end{equation}
the system \eqref{eq:quantum-NS} can be rewritten as 
\begin{equation}\label{eq:effective-system-1}
    \begin{cases}
        \partial_t\rho-\Delta\rho
        +\operatorname{div}(\rho v)=0,\\[2mm]
        \rho\bigl(\partial_t v+u\cdot\nabla v\bigr)
        +\nabla P(\rho)
        =\operatorname{div}(\rho\nabla v).
    \end{cases}
\end{equation}
The far-field conditions are imposed on $\rho$ and $u=v-g$
(see \eqref{eq:far-field-vacuum}). 
For positive density, \eqref{eq:effective-system-1} is equivalent to
\begin{equation}\label{eq:effective-system}
\begin{cases}
\rho_t-\Delta\rho=-\operatorname{div}(\rho v),\\
v_t-\Delta v=-(v-2g)\cdot\nabla v-\mathfrak p,\\
g_t-\Delta g+\nabla\operatorname{div} v=\nabla\Phi,
\end{cases}
\qquad
(\log\rho)_t-\Delta \log\rho=\Phi-\operatorname{div} v.
\end{equation}
All spatial function spaces below are over $\R^N$.

\begin{theorem}\label{thm:global_strong_solution}
Consider the Cauchy problem
\eqref{eq:quantum-NS}--\eqref{eq:far-field-vacuum}
in dimension $N\in\{2,3\}$, with the pressure exponent restricted by
\begin{equation}
\gamma\in
\begin{cases}
(1,\infty), & N=2,\\
(1,8/3),   & N=3.
\end{cases}
\end{equation}
Suppose that the initial data $(\rho_0,u_0)$ satisfy
\begin{equation}\label{ass:initial_data_global}
\begin{aligned}
&0<\rho_0\in L^1(\mathbb{R}^N)\cap H^3(\mathbb{R}^N),
\qquad
u_0\in H^2(\mathbb{R}^N),\\
&g_0:=\nabla\log\rho_0\in L^\infty(\mathbb{R}^N),
\qquad
\nabla g_0\in H^1(\mathbb{R}^N),\\
&\mathfrak{p}_0:=\gamma\rho_0^{\gamma-1}g_0
\in L^2(\mathbb{R}^N).
\end{aligned}
\end{equation}

Then there exists a unique strong solution $(\rho,u)$
defined for all $t\geq0$. For every finite $T>0$, one can find a constant $C_T\geq1$,
depending only on $N$, $\gamma$, the initial data and $T$,
such that
\begin{equation}\label{rhopointwiseest}
C_T^{-1}\rho_0(x)\leq\rho(x,t)\leq C_T\rho_0(x),
\qquad
(x,t)\in\mathbb{R}^N\times[0,T],
\end{equation}
and
\begin{equation}\label{rhovgestimate}
\sup_{0\leq t\leq T}
\left(
\|\rho(t)\|_{L^\infty}
+\|v(t)\|_{L^\infty}
+\|g(t)\|_{L^\infty}
\right)
\leq C_T.
\end{equation}
Moreover, the solution has the regularity
\begin{equation}
\begin{aligned}
\rho
&\in C([0,T];H^3)\cap L^2(0,T;H^4),\\
\rho_t
&\in L^\infty(0,T;H^1)\cap L^2(0,T;H^2),\\
u
&\in C([0,T];H^2)\cap L^2(0,T;H^3),\\
v-v_0,\ g-g_0
&\in C([0,T];H^2),\\
\nabla v,\nabla g
&\in L^\infty(0,T;H^1)\cap L^2(0,T;H^2),\\
u_t,v_t,g_t
&\in L^\infty(0,T;L^2)\cap L^2(0,T;H^1).
\end{aligned}
\end{equation}
Here all spatial function spaces are taken over $\mathbb{R}^N$, with the definition understood componentwise for vector fields. In addition, $\rho$ and $u$ tend to zero at infinity,
uniformly in $t\in[0,T]$.
\end{theorem}
For $\gamma=1$, the pressure force per unit mass is $g$.
Its non-integrable initial part must be included in the
reference velocity.

\begin{theorem}[Global strong solutions when $\gamma=1$]
\label{thm:global-strong-isothermal}
Let $N=2,3$ and $\gamma=1$. Assume
\begin{equation}\label{eq:isothermal-data}
0<\rho_0\in H^3,\qquad u_0\in H^2,\qquad
g_0:=\nabla\log\rho_0\in L^\infty,\qquad
\nabla g_0\in H^1,
\end{equation}
and
\begin{equation}\label{eq:g0-farfield}
g_0(x)\to 0\quad\text{as }|x|\to \infty.
\end{equation}
Then \eqref{eq:quantum-NS}--\eqref{eq:far-field-vacuum} has a
unique global positive strong solution in the class stated below.
With $v_0=u_0+g_0$, the pointwise estimates \eqref{rhopointwiseest}
hold for every finite $T>0$, with $C_T$ depending only on $N,T$
and the norms in \eqref{eq:isothermal-data}. The regularity is
\begin{equation}\label{eq:isothermal-regularity}
\begin{aligned}
\rho&\in C([0,T];H^3)\cap L^2(0,T;H^4),\\
\rho_t&\in C([0,T];H^1)\cap L^2(0,T;H^2),\\
u+t g_0&\in C([0,T];H^2),\\
v-v_0+t g_0,\quad g-g_0&\in C([0,T];H^2),\\
\nabla u,\quad\nabla v,\quad\nabla g
 &\in C([0,T];H^1)\cap L^2(0,T;H^2),\\
u_t+g_0,\quad v_t+g_0,\quad g_t
 &\in C([0,T];L^2)\cap L^2(0,T;H^1).
\end{aligned}
\end{equation}
The limits $\rho(x,t)\to0$ and $u(x,t),v(x,t),g(x,t)\to0$
hold uniformly for $t\in[0,T]$ as $|x|\to\infty$.
\end{theorem}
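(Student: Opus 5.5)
The plan is to run a continuation argument, exactly as for Theorem~\ref{thm:global_strong_solution}, with one change. For $\gamma=1$ the momentum equation in \eqref{eq:effective-system} reads $v_t-\Delta v=-(v-2g)\cdot\nabla v-g$. The source $g$ is not integrable, since $g_0$ only tends to zero at infinity. I would therefore work with the shifted unknowns
\[
w:=v-v_0+tg_0,\qquad h:=g-g_0 .
\]
These satisfy parabolic equations whose forcing terms, for example $-(g-g_0)+t\Delta g_0+\Delta v_0$ and the transport terms, lie in $L^2$ because $\nabla g_0\in H^1$ and $u_0\in H^2$. Local existence and uniqueness in the class \eqref{eq:isothermal-regularity} would follow from a standard fixed point for $(\rho,w,h)$ on a short interval. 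A priori, $\rho$ is locally comparable to $\rho_0$, since $\log\rho-\log\rho_0$ stays bounded for a short time. It then suffices to prove, for each finite $T$, the pointwise bound \eqref{rhopointwiseest} together with uniform bounds on $\|v\|_{L^\infty}$, $\|g\|_{L^\infty}$ and the $H^2$ norms of $w$ and $h$, with all constants depending only on $T$ and the data.

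\textbf{Step 1: relative density.} Set $\sigma=\log(\rho/\rho_0)$. By \eqref{eq:effective-system} it satisfies
\[
\sigma_t-\Delta\sigma-2g_0\cdot\nabla\sigma=\Phi-\operatorname{div} v+|\nabla\sigma|^2-\Delta\log\rho_0 .
\]
This is still nonlinear. A cleaner route is to write the equation for $\rho/\rho_0$ in divergence form, $\rho_0(\rho/\rho_0)_t-\operatorname{div}(\rho_0\nabla(\rho/\rho_0))=-\operatorname{div}(\rho v)+\cdots$. The weight $\rho_0$ has $\nabla\log\rho_0\in L^\infty$, so it is a locally doubling weight with bounded logarithmic gradient. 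On unit balls $\rho_0$ is comparable to a constant, so localized De Giorgi iteration on unit cubes gives upper and lower bounds for $\rho/\rho_0$ in terms of $\|v\|_{L^\infty}$ and $\|g\|_{L^\infty}$, with constants uniform in the cube.

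\textbf{Step 2: coupled scalar inequality.} Without a pressure power there is no $\rho^{\gamma-1}$ damping, so the $L^\infty$ bound for $v$ must be coupled to $g$. Consider a quantity such as $\psi=\tfrac12|v+tg_0|^2$, or $|v|^2$ combined with $\pm\log(\rho/\rho_0)$. The forcing $-g\cdot v$ splits as $-g_0\cdot v-\nabla\sigma\cdot v$. The first part is absorbed by the shift, and the second is expressed through $\partial_t\sigma$ and $\Delta\sigma$ using the $\sigma$-equation. Choosing a combination $\Psi=|v|^2/2+a\sigma+b|g_0|t$ for which the maximum principle yields the differential inequality $\frac{d}{dt}\sup\Psi\le C(1+\sup\Psi)$ would bound $v$ linearly in time on $[0,T]$. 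Feeding this into Step~1 then gives \eqref{rhopointwiseest}.

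\textbf{Step 3: bound on $g$ and higher order.} The bound on $g$ comes from the $g$-equation via weighted energy estimates with weight $\rho$. These use the BD-type entropy available from \eqref{eq:effective-system-1} and the relative-density bounds to compare $\rho$ with $\rho_0$. A localized De Giorgi iteration for $|g|^2$ on unit balls then upgrades the estimate to $L^\infty$. With $v$, $g$ and $\rho/\rho_0$ bounded, the $H^1$ and $H^2$ estimates for $w$, $h$ and $\rho$ are standard parabolic energy estimates. Here $\nabla v$ and $\nabla g$ are treated as the principal unknowns, and the lower-order terms are controlled by Gagliardo--Nirenberg inequalities in $N\le3$. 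The decay at infinity, uniform in $t\in[0,T]$, follows because $w,h\in C([0,T];H^2)$, $g_0\to0$, and $\rho\le C_T\rho_0$. Uniqueness follows from an $L^2$ energy estimate for the difference of two solutions in these variables.

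\textbf{Main obstacle.} I expect Step~2 to be the hardest: closing the $v$ bound without pressure damping and without a lower bound on $\rho$. My first attempt would be the combination $\Psi$ above, with $a$ tuned so that the cross term $\nabla\sigma\cdot v$ cancels against $2g\cdot\nabla\sigma$ from the transport term. If that fails, I would fall back on a Gronwall argument coupling $\|v\|_{L^\infty}$ and $\|g\|_{L^\infty}$ through the De Giorgi constants of Steps~1 and~3.
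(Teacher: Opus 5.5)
\textbf{Step 2 cannot close on its own, and the idea that closes it is missing.} Take $\mathcal P=\partial_t-\Delta+(v-2g)\cdot\nabla$. Without a bound on $g$, the only way to handle the cross term $-v\cdot g$ in $\mathcal P(\tfrac12|v|^2+a\sigma)$ is to complete the square against $-a|g|^2$. This forces $a>0$ and leaves a residual $\tfrac1{4a}|v|^2$. After subtracting $\tfrac1{4a}\int_0^t\|v\|_{L^\infty}^2\,ds$, the comparison principle does bound $\sup_x\Psi$ from above. But because $a>0$, this controls $|v|$ only together with a lower bound for $\sigma$, i.e.\ for $\rho/\rho_0$, and your Step~1 supplies that only in terms of $\|v\|_{L^\infty}$. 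So there is no inequality $\frac{d}{dt}\sup\Psi\le C(1+\sup\Psi)$, and the order ``Step~2, then feed into Step~1'' is circular.

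The paper breaks the circle quantitatively. The Moser bound for the reciprocal relative density $z=\omega/\rho$ depends on $v$ only through $\exp\bigl(C\int_0^t\|v\|_{L^\infty}^2\,ds\bigr)$ and a factor $\bigl(1+\sup_{s\le t}\|v(s)\|_{L^\infty}^2\bigr)^{(N+2)/4}$. Hence $\log\|z\|_{L^\infty}$ is of the same order as $|v|^2$. Write $\tfrac12|v|^2=W+\log z$, where $W=\tfrac12|v|^2+\log(\rho/\omega)$ is bounded above by comparison. This gives, for $F(t)=\sup_{s\le t}\|v(s)\|_{L^\infty}^2$,
$F(t)\le C(1+t)+C\log(1+F(t))+C\int_0^tF\,ds$.
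The logarithm is absorbed, and Gronwall gives $F(t)\le C(1+\|v_0\|_{L^\infty}^2+L^2)e^{Ct}$ with $L=\|\nabla\log\rho_0\|_{L^\infty}$.

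This requires the relative-density bound not to involve $\|g\|_{L^\infty}$, contrary to what your Step~1 states. That is easily fixed, since the drift in your weighted divergence-form equation for $\rho/\rho_0$ is $v-g_0$. Your fallback of a Gronwall loop through $\|g\|_{L^\infty}$ would not close, because the $g$ bounds one can actually derive are exponential in powers of $\|v\|_{L^\infty}$. Also, $\sigma=\log(\rho/\rho_0)$ is not admissible in the comparison principle. Its equation contains $\Delta\log\rho_0=\operatorname{div}g_0$, which is only in $H^1$. The paper uses instead $H=\eta*\log\rho_0$, with $|H-\log\rho_0|\le L$ and $\|\Delta H\|_{L^\infty}\le C_\eta L$.

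\textbf{Step 3 would not produce the $L^\infty$ bound on $g$.} A De Giorgi iteration on $g_t-\Delta g=\nabla(|g|^2-v\cdot g-\operatorname{div}v)$, or on $|g|^2$, does not close with only $\rho$ and $v$ bounded. The source is quadratic in the unknown and contains $\nabla\operatorname{div}v$. Truncation therefore generates $|g|^4$ and $|\operatorname{div}v|^2$ on the superlevel sets, and nothing available controls them.

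The BD-type entropy cannot fill this gap. Its derivation uses finite mass, which this theorem does not assume: $\rho_0=(1+|x|^2)^{-1}$, $u_0=0$ satisfies all hypotheses for $N=2,3$, yet $\rho_0\notin L^1$. Even when available, the entropy is $\rho$-weighted. On $B_1(y)$ it only gives $\int_0^T\!\int_{B_1(y)}|\nabla g|^2\lesssim 1/\rho_0(y)$, which degenerates at infinity.

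The missing idea is to avoid the $g$-equation and pass to conservative variables. With $m=\rho v$ one has $\rho_t-\Delta\rho=-\operatorname{div}m$ and $m_t-\Delta m=-\operatorname{div}(m\otimes v)-\nabla\rho$. Then $Z=\nabla\rho/\omega$ and $M=\nabla m/\omega$ satisfy $\mathcal U_t-\Delta\mathcal U=\operatorname{div}\mathbb F+\mathcal H$, with fluxes and sources bounded linearly by $|Z|+|M|$ and coefficients depending only on $\|v\|_{L^\infty}$ and $L$. The argument then runs as follows:
\begin{enumerate}
\item A localized sixth-power energy estimate gives uniform-local $L^6$ bounds for $Z$ and $M$.
\item De Giorgi with $L^6$ data, which works because $6>N+2$, bounds $\nabla\rho/\omega$.
\item $g=(\nabla\rho/\omega)/(\rho/\omega)$ then follows from the lower relative-density bound.
\end{enumerate}
With these two steps repaired, your higher-order, decay and uniqueness arguments are in line with the paper's.
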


\begin{remark}\label{rem:affine-space}
Neither theorem assumes $v_0,g_0\in L^2$.
For example, $\rho_0(x)=(1+|x|^2)^{-2}$ and $u_0=0$ satisfy
\eqref{eq:isothermal-data}--\eqref{eq:g0-farfield}, whereas
$g_0=-4x/(1+|x|^2)\notin L^2$ in both dimensions.
Thus \eqref{eq:isothermal-regularity} implies $u(t)\notin L^2$
for $t>0$ in this example. The shift $t g_0$ cannot be removed.
Likewise, we do not assert $u+t g_0\in L^2(0,T;H^3)$:
the assumptions give only $\nabla g_0\in H^1$.
\end{remark}
\begin{remark}\label{rem:isothermal-velocity-space}
The assumption $u_0\in H^2$ is consistent with the regularity
$u+t g_0\in C([0,T];H^2)$, although $u(t)$ need not belong
to $H^2$ for positive times. Indeed, the isothermal pressure
force per unit mass is $-g=-g_0-(g-g_0)$, and the initial
component $g_0$ need not be square integrable. The corresponding
correction in the velocity is $-t g_0$, so that
\[
u(t)=-t g_0+\widetilde u(t),
\qquad
\widetilde u\in C([0,T];H^2),
\qquad
\widetilde u(0)=u_0.
\]
In particular, if $g_0\notin L^2$, then $u(t)\notin L^2$,
and hence $u(t)\notin H^2$, for every $t>0$.

The initial velocity is nevertheless attained strongly.
Since $g_0\in L^\infty$, $\nabla g_0\in H^1$, and
$H^2(\mathbb R^N)\hookrightarrow L^\infty(\mathbb R^N)$
for $N=2,3$, we have
\[
\lim_{t\downarrow0}
\left(
\|u(t)-u_0\|_{L^\infty}
+\|\nabla u(t)-\nabla u_0\|_{H^1}
\right)=0.
\]
Thus the initial condition is preserved, while the global
$H^2$ regularity is expressed through the corrected velocity
$u+t g_0$.
\end{remark}
The local construction is given in Appendix~\ref{Appendix7}.
The effective velocity estimates for
$\gamma>1$ and $\gamma=1$ are proved separately in
Sections~\ref{effvelocsec} and \ref{sec:iso-vsec}.
Section~\ref{sec:log-density-gradient} gives an estimate for
$\nabla\log\rho$. Higher-order estimates and continuation are
then established in Sections~\ref{sec:higher-order} and \ref{sec:continuation}.

Far-field vacuum creates substantial analytical difficulties.
As the density approaches zero, the viscosity coefficient
degenerates and the capillarity coefficient becomes singular.
In particular, the density-weighted kinetic energy does not
directly control the unweighted velocity norm. Moreover,
estimates involving the logarithmic density gradient
$\nabla\log\rho=\nabla\rho/\rho$ must account for the vanishing
denominator. A central part of the analysis is therefore to
derive estimates that remain uniform without a positive lower
bound for the density on the whole space.

\section{Preliminaries and Main Strategies}
Before presenting the main new estimates of this paper, we collect
several auxiliary lemmas. We begin with a local existence result
for the above Cauchy problem.
\subsection{Local Well-posedness}

\begin{lemma}[Local strong solutions when $\gamma>1$]\label{lem: loc sol for gamma>1}
   Consider the Cauchy problem
\eqref{eq:quantum-NS}--\eqref{eq:far-field-vacuum}
in dimension $N\in\{2,3\}$, and let $\gamma>1$. Suppose that the initial data $(\rho_0,u_0)$ satisfy
\begin{equation}\label{ass: initial data  of local sol}
    \begin{aligned} &0<\rho_0\in H^3(\mathbb R^N), \qquad u_0\in H^2(\mathbb R^N),\\ &g_0:=\nabla\log\rho_0\in L^\infty(\mathbb R^N), \qquad \nabla g_0\in H^1(\mathbb R^N),\\ &\mathfrak p_0:=\gamma\rho_0^{\gamma-1}g_0\in L^2(\mathbb R^N). \end{aligned}
\end{equation}
Then there exists $0<T\le 1$, depending only on $N,\gamma$ and\begin{equation}
    A_0:=1+\|\rho_0\|_{H^3}+\|u_0\|_{H^2} +\|g_0\|_{L^\infty}+\|\nabla g_0\|_{H^1} +\|\mathfrak p_0\|_{L^2},
\end{equation}
such that the problem has a unique local strong solution $(\rho,u)$ on $[0,T ]$. Set
\begin{equation}
    v:=u+\nabla\log\rho,\qquad g:=\nabla\log\rho,\qquad \mathfrak p:=\gamma\rho^{\gamma-1}g,\qquad v_0:=u_0+g_0.
\end{equation}
For some $B<\infty$, depending only on the initial data,
\begin{equation}
    e^{-B}\rho_0(x)\leq \rho(x,t)\leq e^B\rho_0(x), \qquad (x,t)\in\mathbb R^N\times[0,T ].
\end{equation}
Moreover, the solution has the regularity 
\begin{equation}\label{regular of local sol}
    \begin{aligned} \rho &\in C([0,T ];H^3)\cap L^2(0,T ;H^4),\\ \rho_t &\in C([0,T ];H^1)\cap L^2(0,T ;H^2),\\ u &\in C([0,T ];H^2)\cap L^2(0,T ;H^3),\\   v-v_0,\ g-g_0 &\in C([0,T ];H^2),\\ v,g &\in C([0,T ];\mathcal B),\\ \nabla v,\nabla g &\in L^2(0,T ;H^2),\\ u_t, v_t,g_t &\in C([0,T ];L^2)\cap L^2(0,T ;H^1),\\ \mathfrak p &\in C([0,T ];H^1),\\ \log\rho-\log\rho_0 &\in C([0,T ];BUC). \end{aligned}
\end{equation}
Here \begin{equation}\label{space: BUC}
    BUC(\mathbb{R}^N):=\{f\in C(\mathbb{R}^N):\|f \|_{L^{\infty}}<\infty,\ f\text{ uniformly continuous}\}, 
\end{equation}
i.e. the space of bounded uniformly continuous functions, with norm $\|f\|_{L^\infty}:=\sup_{x\in\mathbb{R}^N}|f(x)|$, and
\begin{equation}\label{space: BUC and nabla H1}
    \mathcal{B}:=\{f\in BUC(\mathbb{R}^N):\nabla f\in H^1(\mathbb{R}^N)\},\qquad \|f\|_{\mathcal{B}}:=\|f\|_{L^\infty}+\|\nabla f\|_{H^1}.
\end{equation}
In addition, $\rho$ and $u$ tend to zero at infinity, uniformly in $t\in[0,T ]$.
\end{lemma}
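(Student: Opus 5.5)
We construct the local solution in the effective variables rather than in $(\rho,u)$. The degeneracy at infinity then appears only through the bounded field $g$ and the bounded logarithm $\log\rho-\log\rho_0$. Write $\rho=\rho_0e^{w}$ with $w:=\log\rho-\log\rho_0$, and $g=g_0+\nabla w$, $v=v_0+\tilde v$. By \eqref{eq:effective-system}, the unknowns $(w,\tilde v)$ solve a semilinear parabolic system:
\begin{equation*}
w_t-\Delta w=\Delta\log\rho_0+|g|^2-v\cdot g-\operatorname{div} v,\qquad
\tilde v_t-\Delta\tilde v=\Delta v_0-(v-2g)\cdot\nabla v-\gamma\rho_0^{\gamma-1}e^{(\gamma-1)w}g.
\end{equation*}
The forcing terms $\Delta\log\rho_0=\operatorname{div}g_0\in H^1$ and $\Delta v_0\in L^2$ lie in Sobolev spaces. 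The coefficients $g_0,v_0$ lie in $\mathcal B$, and the pressure term at $w=0$ equals $\mathfrak p_0\in L^2$. The hypotheses \eqref{ass: initial data of local sol} are designed exactly for this structure.

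\textbf{Step 1: fixed point.} Work in the space
\[
X_T=\{(w,\tilde v):\ w\in C([0,T];BUC),\ \nabla w,\tilde v\in C([0,T];H^2)\cap L^2(0,T;H^3)\},
\]
with $w(0)=\tilde v(0)=0$. Here $\nabla w$ plays the role of $g-g_0$. Linearize by freezing the nonlinear right-hand sides and solve with the heat semigroup. Use Duhamel and maximal $L^2$ regularity for $\nabla w,\tilde v$, and the $L^\infty$ bound $\|e^{t\Delta}f\|_{L^\infty}\le\|f\|_{L^\infty}$ together with $L^\infty$–$H^s$ embeddings ($N\le3$) for $w$ itself. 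Products such as $g\cdot\nabla v$ are handled by splitting $g=g_0+\nabla w$, with $g_0\in L^\infty$ and $\nabla g_0\in H^1$, and using that $H^2$ is an algebra in $N\le 3$. The pressure term $\gamma\rho_0^{\gamma-1}e^{(\gamma-1)w}(g_0+\nabla w)$ needs bounds on $\mathfrak p$ in $H^1$, which follow from $\mathfrak p_0\in L^2$, $\rho_0\in H^3$, the boundedness of $w$, and $\gamma>1$. This last point is where $\gamma>1$ is used: $\nabla(\rho_0^{\gamma-1})=(\gamma-1)\rho_0^{\gamma-1}g_0$ is again controlled by $\mathfrak p_0$. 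We then show that the solution map sends a ball of radius $2C A_0$ in $X_T$ into itself and is a contraction for $T$ small depending on $A_0$, with smallness coming from factors $T^{1/2}$. This yields a unique $(w,\tilde v)$.

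\textbf{Step 2: recovering $(\rho,u)$ and the stated regularity.} Set $\rho=\rho_0e^{w}$ and $u=v-g$. The pointwise bound $e^{-B}\rho_0\le\rho\le e^B\rho_0$ holds with $B=\sup_t\|w\|_{L^\infty}$. Next we check that $g=\nabla\log\rho$ is consistent: apply the gradient to the $w$-equation and compare with the $g$-equation. We then verify that $(\rho,u)$ satisfies \eqref{eq:effective-system-1}, and hence \eqref{eq:quantum-NS}, since the algebraic equivalence holds for positive density.

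For the Sobolev regularity $\rho\in C([0,T];H^3)\cap L^2(H^4)$, treat $\rho_t-\Delta\rho=-\operatorname{div}(\rho v)$ as a linear heat equation. Note that $\rho v=\rho u+\nabla\rho$, and $\rho v_0$ is controlled via $|\rho v_0|\le C\rho_0|v_0|$. Do $H^3$ energy estimates, with $\rho\nabla v$ controlled by $\|\rho\|_{L^\infty}\|\nabla v\|_{H^2}$. For $u=\tilde v+v_0-g_0-\nabla w$: since $u_0\in H^2$, the non-$L^2$ parts of $v_0$ and $g_0$ cancel, and $u\in C(H^2)$ follows. The facts $u\to0$ and $\rho\to0$ at infinity follow from $H^2\subset C_0$ and $\rho\le e^B\rho_0$. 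Time-derivative regularity is then read off from the equations.

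\textbf{Step 3: uniqueness.} Uniqueness within the class is proved by an energy estimate for differences of $(w,\tilde v)$, i.e.\ the contraction estimate at the level of $L^2$-type norms, followed by Gronwall.

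\textbf{Main obstacle.} The hardest step is closing the $w$-component in $L^\infty$ while $\nabla w$ is estimated in $H^2$. The source $|g|^2-v\cdot g$ contains $|g_0|^2-v_0\cdot g_0$, which lies only in $L^\infty$ and not in $L^2$. I would handle it by treating $w$ purely by the $L^\infty$ maximum principle, with $\|w(t)\|_{L^\infty}\le C t(1+\|\cdot\|)^2$. I would estimate $\nabla w$ separately through the $g$-equation, whose source $\nabla\Phi$ involves only $\nabla g_0,\nabla v_0\in H^1$ multiplied by bounded functions.
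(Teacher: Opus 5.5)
Your overall architecture (effective variables, a fixed point for perturbations, $\rho=\rho_0e^{w}$, then $H^3$ bounds for $\rho$ from the linear density equation) is close to the paper's. However, expanding around the \emph{time-independent} data, $v=v_0+\tilde v$ and $g=g_0+\nabla w$, breaks Step~1, because the true solution does not lie in your space $X_T$.

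\textbf{The obstruction.} $X_T$ demands $\tilde v,\nabla w\in L^2(0,T;H^3)$. The lemma itself gives $\nabla v,\nabla g\in L^2(0,T;H^2)$. Since $\nabla^3\tilde v=\nabla^3v-\nabla^3v_0$ and $\nabla^3(\nabla w)=\nabla^3g-\nabla^3g_0$, membership in $X_T$ would force $\nabla^3v_0,\nabla^3g_0\in L^2$. The hypotheses give only $u_0\in H^2$ and $\nabla g_0\in H^1$.

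The same obstruction appears in the forcings you introduce. In the $\tilde v$-equation the forcing is $\Delta v_0$. In the equation for $\nabla w$ it is the combination $\Delta g_0-\nabla\operatorname{div}v_0=-\nabla\operatorname{div}u_0$, using that $g_0$ is a gradient. Both lie in $L^2$ but not in $L^2(0,T;H^1)$, which is what maximal regularity needs to produce $C H^2\cap L^2H^3$. Explicitly, $\int_0^te^{(t-s)\Delta}\Delta v_0\,ds=e^{t\Delta}v_0-v_0$, and the constant part $-v_0$ has no $L^2_tH^3$ regularity. So the solution map cannot send a ball of $X_T$ into itself.

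The problem resurfaces in Step~2. You bound $\rho\nabla v$ by $\|\nabla v\|_{H^2}$, but $\nabla v=\nabla v_0+\nabla\tilde v$ with $\nabla v_0\notin H^2$ yields no $L^2(0,T;H^2)$ bound. That bound is exactly what the $H^3$/$L^2H^4$ estimate for $\rho$ and the claim $u\in L^2(0,T;H^3)$ rest on.

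\textbf{The missing idea.} Expand around the \emph{free evolution} of the data, not the data itself. The paper first builds reference fields $\bar v,\bar g$ solving $\bar v_t-\Delta\bar v=0$, $\bar g_t-\Delta\bar g+\nabla\operatorname{div}\bar v=0$, $(\bar v,\bar g)(0)=(v_0,g_0)$. Since $v_0,g_0\notin L^2$, this is done through the differentiated system for $(\nabla\bar v,\nabla\bar g)$ with $H^1$ data, followed by time integration. It also builds $\bar h$ with $\nabla\bar h=\bar g$ and $\|\bar h-h_0\|_{L^\infty}\le B_0\sqrt t$.

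Parabolic smoothing then gives $\nabla\bar v,\nabla\bar g\in L^2(0,T;H^2)$ and $\bar v-v_0,\bar g-g_0\in C([0,T];H^2)$. The perturbations $v-\bar v$ and $g-\bar g$ have zero data and sources in $L^2(0,T;H^1)$. The term $\log\rho-\bar h$ is handled in $C([0,T];BUC)$, with its gradient tied to $g-\bar g$.

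\textbf{A smaller point.} If you freeze $-\operatorname{div}v$ in the $w$-equation, the difference term $\nabla\operatorname{div}(\tilde v_1-\tilde v_2)$ enters the estimate for $\nabla w$ with the full $L^2H^3$ norm and no factor $T^{1/2}$. So the contraction does not follow from $T^{1/2}$ factors alone. The paper instead places $\nabla\operatorname{div}$ of the \emph{new} velocity iterate on the left-hand side of the linear system and weights the velocity component by $4N^2$ in the norm. A weighted norm would serve you equally well.
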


\begin{proof}
    See Subsection \ref{subsec: proof of loc sol}.
\end{proof}
\begin{lemma}[Local strong solutions]\label{lem:local-isothermal}
Let $N=2,3$, and suppose that
\[
0<\rho_{0}\in H^3,\qquad
g_{0}:=\nabla\log\rho_{0}\in\mathcal B,\qquad
v_{0}\in\mathcal B.
\]
There exists $0<T\le1$, depending only on $N$ and
\[
\tilde{A}_{0}:=1+\|\rho_{0}\|_{H^3}
       +\|v_{0}\|_{\mathcal B}+\|g_{0}\|_{\mathcal B},
\]
such that the effective system \eqref{eq:effective-system} with
$\gamma=1$ and initial data $(\rho_{0},v_{0},g_{0})$ has a unique positive
strong solution on $[0,T]$ satisfying
\begin{equation}\label{eq:iso-local-class}
\begin{aligned}
\rho&\in C([0,T];H^3)\cap L^2(0,T;H^4),\\
\rho_t&\in C([0,T];H^1)\cap L^2(0,T;H^2),\\
v,g&\in C([0,T];\mathcal B),\qquad
\nabla v,\nabla g\in L^2(0,T;H^2),\\
v-v_{0}+t g_{0},\quad g-g_{0}&\in C([0,T];H^2),\\
v_t+g_{0},\quad g_t&\in C([0,T];L^2)\cap L^2(0,T;H^1),\\
\log\rho-\log\rho_{0}&\in C([0,T];BUC).
\end{aligned}
\end{equation}
Moreover, $g=\nabla\log\rho$ and
$e^{-C}\rho_{0}\le\rho\le e^C\rho_{0}$, where $C$ depends only on
$N,\tilde{A}_{0}$. With $u=v-g$ and $u_{0}=v_{0}-g_{0}$, one has
\[
u-u_{0}+t g_{0}\in C([0,T];H^2),\qquad
u_t+g_{0}\in C([0,T];L^2)\cap L^2(0,T;H^1),
\]
and $(\rho,u)$ solves \eqref{eq:quantum-NS} with $\gamma=1$.
If $u_{0}\in H^2$, the first of these assertions becomes
$u+t g_{0}\in C([0,T];H^2)$.
The lifespan requires neither $u_{0}\in H^2$ nor $g_{0}\in L^2$.
If $v_{0},g_{0}\in C_0(\mathbb R^N)$, all the corresponding far-field
limits hold uniformly on $[0,T]$.
\end{lemma}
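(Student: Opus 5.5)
The proof will be a contraction argument for the parabolic system \eqref{eq:effective-system} with $\gamma=1$, set in an affine space that absorbs the non-integrable parts of the data. With $\gamma=1$ we have $\mathfrak p=g$, so the $v$-equation reads $v_t-\Delta v=-(v-2g)\cdot\nabla v-g$. Writing $g=g_0+(g-g_0)$, the forcing $-g_0$ is bounded but possibly not in $L^2$. We therefore introduce the unknowns
\[
w:=v-v_0+tg_0,\qquad h:=g-g_0,\qquad \ell:=\log\rho-\log\rho_0 .
\]
These satisfy
\[
w_t-\Delta w=\Delta v_0-t\Delta g_0-(v-2g)\cdot\nabla v-h,\qquad
h_t-\Delta h+\nabla\operatorname{div}v=\Delta g_0+\nabla\Phi,
\]
and $\ell_t-\Delta\ell=\Delta\log\rho_0+\Phi-\operatorname{div}v$. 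Every source term lies in $L^2$, since $\nabla v_0,\nabla g_0\in H^1$ and quadratic terms such as $v\cdot\nabla v$ and $|g|^2-v\cdot g$ have at least one factor in $L^2$ or are differences. For example, $\Phi=|g_0|^2-v_0\cdot g_0+(\text{terms with }h,w)$. The term $|g_0|^2-v_0\cdot g_0$ is only in $L^\infty$, so $\ell$ is controlled in $C([0,T];BUC)$, using $\|e^{t\Delta}f\|_{L^\infty}\le\|f\|_{L^\infty}$ and Duhamel, rather than in $L^2$. This is also why we close the scheme in $\mathcal B$ for $v,g$.

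Step 1 is the iteration map. Given $(\bar w,\bar h)$ in the ball
\[
X_T=\Bigl\{\|(w,h)\|_{C([0,T];H^2)}+\|(\nabla w,\nabla h)\|_{L^2(0,T;H^2)}\le M,\ \|(w,h)\|_{L^\infty}\le M\Bigr\},
\]
we set $\bar v=v_0-tg_0+\bar w$ and $\bar g=g_0+\bar h$. We then solve the linear heat equations with right-hand sides frozen at $(\bar v,\bar g)$. In the $h$-equation the term $\nabla\operatorname{div}v$ is treated by also freezing $v$, or by solving the $(w,h)$ system jointly since it is upper triangular once the right side is frozen.

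Standard $H^2$ energy estimates for the heat equation give bounds of the form $\|(w,h)\|\le C(\tilde A_0)T^{1/2}P(M)+C\tilde A_0$. Here $H^2\hookrightarrow L^\infty$ handles the products, and $\nabla\bar v,\nabla\bar g\in H^1$ combine with $\bar v,\bar g\in L^\infty$. One derivative can be moved onto the heat kernel through the $L^2(0,T;H^3)$ smoothing. Choosing $M=2C\tilde A_0$ and $T$ small makes the map a self-map and, by the same estimates in a weaker norm ($C([0,T];H^1)$), a contraction. The lifespan depends only on $\tilde A_0$; no $L^2$ norm of $u_0$ or $g_0$ enters.

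Step 2 recovers $\rho$ and the consistency $g=\nabla\log\rho$. This is the main obstacle, because the system \eqref{eq:effective-system} treats $g$ as independent. We define $\rho$ by solving the linear equation $\rho_t-\Delta\rho=-\operatorname{div}(\rho v)$ with the constructed $v$. This is equivalent to the $\ell$-equation $\ell_t-\Delta\ell=\Delta\log\rho_0+|\nabla\log\rho|^2-v\cdot\nabla\log\rho-\operatorname{div}v$, where $\nabla\log\rho$ now appears instead of $g$. Setting $q:=\nabla\log\rho-g$ and subtracting the gradient of this equation from the $g$-equation, we get a linear parabolic system of the form
\[
q_t-\Delta q=\nabla\bigl(q\cdot(2g+q-v)\bigr)
\]
with $q(0)=0$. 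An $L^2$ energy estimate, valid since $q\in C([0,T];H^1)$, together with Grönwall gives $q\equiv0$.

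Positivity and the comparison $e^{-C}\rho_0\le\rho\le e^C\rho_0$ follow from $\|\ell\|_{L^\infty}\le C$, obtained by the Duhamel bound in Step 1. The $H^3$ regularity of $\rho$ then follows from the linear equation, using $v\in\mathcal B$ and $\nabla v\in L^2(H^2)$.

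Step 3 covers the remaining conclusions. The function $u=v-g$ solves \eqref{eq:quantum-NS} by reversing the algebra that led to \eqref{eq:effective-system-1}, and $u-u_0+tg_0=w-h$ gives the stated class. Time derivatives follow directly from the equations. Uniqueness comes from an $L^2$ energy estimate for the difference of two solutions in the class. The far-field limits follow because $w,h\in H^2\subset C_0$, and $v_0,g_0\in C_0$ by assumption.
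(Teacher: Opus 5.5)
Your proposal has two genuine gaps, one in Step 1 and one in Step 2.

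\textbf{Step 1: the iteration space.} You only know $\nabla v_0,\nabla g_0\in H^1$. So the data forcings in your affine equations, $\Delta v_0-t\Delta g_0$ for $w$ and $\Delta g_0-\nabla\operatorname{div}(v_0-tg_0)$ for $h$, lie in $L^2$ but not in $H^1$. The $C([0,T];H^2)\cap L^2(0,T;H^3)$ energy estimate needs forcing in $L^2(0,T;H^1)$, so it does not apply.

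In fact your constraint $\|(\nabla w,\nabla h)\|_{L^2(0,T;H^2)}\le M$ already fails for the linear part. The problem $w_t-\Delta w=\Delta v_0$, $w(0)=0$ gives $w=e^{t\Delta}v_0-v_0$, and $\nabla w\in L^2(0,T;H^2)$ would force $\nabla^3v_0\in L^2$. Likewise $\nabla h=\nabla g-\nabla g_0$ with $\nabla g_0\notin H^2$ in general. Your ball therefore contains no solution, and the map cannot be a self-map of it. This is why the lemma asserts only $v-v_0+tg_0,\ g-g_0\in C([0,T];H^2)$ together with $\nabla v,\nabla g\in L^2(0,T;H^2)$ (compare Remark~\ref{rem:affine-space}).

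The missing idea is to split off the free linear evolution of the data. The paper first solves $\bar v_t-\Delta\bar v=-g_0$, $\bar g_t-\Delta\bar g+\nabla\operatorname{div}\bar v=0$, $(\bar v,\bar g)(0)=(v_0,g_0)$, through the differentiated system for $(\partial_k\bar v,\partial_k\bar g)$. That system has $H^1$ data and $H^1$ forcing $-\partial_kg_0$. Parabolic smoothing gives $\nabla\bar v,\nabla\bar g\in C([0,T];H^1)\cap L^2(0,T;H^2)$, and time integration gives $\bar v-v_0+tg_0,\ \bar g-g_0\in C([0,T];H^2)$. The contraction is then run for $(v-\bar v,g-\bar g)$ in $C([0,T];H^2)\cap L^2(0,T;H^3)$. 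This correction has zero data and forcings $-(v-2g)\cdot\nabla v-(g-g_0)$ and $\nabla F$, both in $L^2(0,T;H^1)$.

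\textbf{Step 2: consistency $g=\nabla\log\rho$.} You define $\rho$ by the linear continuity equation and then prove $q=\nabla\log\rho-g\equiv0$ by an energy estimate. That estimate presupposes $q$ is bounded, since it sits in the coefficient $2g+q-v$, and square integrable, at least with a weight. This is a priori control of $\nabla\rho/\rho$ where $\rho\to0$, which here would require something like $\rho\gtrsim\rho_0$ and $|\nabla\rho|\lesssim\rho_0$. Nothing in your construction supplies it. Sections~\ref{sec:relative-density} and~\ref{sec:log-density-gradient} obtain such bounds only with substantial effort, and only as a priori estimates for solutions already in the local class.

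Your appeal to $\|\ell\|_{L^\infty}\le C$ is circular. The $\ell$ bounded in Step 1 solves the equation with $g$ in place of $\nabla\log\rho$, and it equals $\log\rho-\log\rho_0$ only once $q=0$ is known.

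Reverse the order. The gradient of your $\ell$-equation is exactly your $h$-equation, with the same zero data. Uniqueness for the heat equation among tempered distributions (since $\ell$ is bounded) then gives $\nabla\ell=g-g_0$. Now set $\rho:=\rho_0e^{\ell}$. Then $g=\nabla\log\rho$ and $e^{-C}\rho_0\le\rho\le e^{C}\rho_0$ hold by construction, and the continuity equation follows from the chain rule. $L^2$-uniqueness for the linear equation identifies this $\rho$ with its $C([0,T];H^3)\cap L^2(0,T;H^4)$ solution.

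This is the paper's route: it first shows $\operatorname{curl}g=0$ and sets $\log\rho:=\log\rho_0+\int_0^t(\operatorname{div}g-\operatorname{div}v+F)\,ds$. The time-integral form also avoids the forcing $\Delta\log\rho_0=\operatorname{div}g_0$. That forcing lies in $H^1$ but not in $L^\infty$, so it does not fit your bound $\|e^{t\Delta}f\|_{L^\infty}\le\|f\|_{L^\infty}$.

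Once these two points are repaired, your remaining steps (recovering \eqref{eq:quantum-NS}, uniqueness, and the far-field limits) go through.
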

\begin{proof}
See Subsection~\ref{subsec:local-isothermal}.
\end{proof}
\subsection{The Upper bound of $\rho$}
The following lemmas adapt the estimates in Propositions 3.1--3.5
of Huang--Gu--Lei \cite{Huang-Gu-Lei} to the setting of vacuum
at infinity.

Throughout these lemmas, let $N\in\{2,3\}$ and assume that
\begin{equation}\label{eq:admissible-gamma}
\begin{cases}
1\leq\gamma<\infty, & N=2,\\
1\leq\gamma<8/3,    & N=3.
\end{cases}
\end{equation}
Let $(\rho,v)$ be a sufficiently regular solution of
\eqref{eq:effective-system-1}, with $P(\rho)=\rho^\gamma$,
on its interval of existence $[0,T_*)$, subject to
\eqref{eq:initial-conditions} and \eqref{eq:far-field-vacuum}.
We assume that $\rho(x,t)>0$ at every finite spatial point
and set
\[
u=v-\nabla\log\rho.
\]
No uniform positive lower bound for $\rho$ is imposed.

Suppose that $\rho_0\in L^1(\mathbb R^N)\cap L^\infty(\mathbb R^N)$
and that
\begin{equation}\label{eq:initial-control-quantities}
\begin{aligned}
M_0&:=\int_{\mathbb R^N}\rho_0\,dx<\infty,\\
\mathcal E_0
&:=\int_{\mathbb R^N}
\left(
\rho_0|v_0|^2
+|\nabla\sqrt{\rho_0}|^2
+\rho_0^\gamma
\right)\,dx<\infty.
\end{aligned}
\end{equation}
Unless otherwise specified, all spatial integrals and norms
are taken over $\mathbb R^N$.

\begin{lemma}[Mass conservation and basic energy estimate]
\label{lem:basic-energy}
For every $0<T<T_*$, the mass is conserved:
\begin{equation}\label{eq:mass-conservation}
\int_{\mathbb R^N}\rho(x,t)\,dx=M_0,
\qquad 0\leq t\leq T.
\end{equation}
Moreover,
\begin{equation}\label{eq:basic-energy-estimate}
\sup_{0\leq t\leq T}
\int_{\mathbb R^N}
\left(
\rho|u|^2+|\nabla\sqrt\rho|^2+\rho^\gamma
\right)\,dx
+
\int_0^T\int_{\mathbb R^N}
\rho|\mathcal{D}(u)|^2\,dx\,dt
\leq C_T,
\end{equation}
where
\[
\mathcal{D}(u)=\frac12\bigl(\nabla u+(\nabla u)^{\mathsf T}\bigr),
\]
and $C_T$ depends only on $N$, $\gamma$, $M_0$,
$\mathcal E_0$, and $T$.
When $\gamma>1$, the constant in
\eqref{eq:basic-energy-estimate} can be chosen independently of $T$.
\end{lemma}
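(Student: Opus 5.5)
Mass conservation comes from integrating the continuity equation $\rho_t+\operatorname{div}(\rho u)=0$ over $\mathbb R^N$. In the effective form $\rho_t-\Delta\rho+\operatorname{div}(\rho v)=0$ the flux is $\rho u=\rho v-\nabla\rho$, which is integrable and decays at infinity for the regularity class under consideration. To make this rigorous, we multiply by a cutoff $\phi_R(x)=\phi(x/R)$, integrate in space and time, and let $R\to\infty$ using dominated convergence. This gives \eqref{eq:mass-conservation}.

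For the energy we use the standard combination of the classical energy and the BD entropy; the effective system \eqref{eq:effective-system-1} encodes it.
\begin{itemize}
\item First, testing the momentum equation of \eqref{eq:quantum-NS} with $u$ gives
\[
\frac{d}{dt}\int\Bigl(\tfrac12\rho|u|^2+2|\nabla\sqrt\rho|^2+\Pi(\rho)\Bigr)+2\int\rho|\mathcal D(u)|^2=0.
\]
Here the capillary term $\int\operatorname{div}(\rho\nabla^2\log\rho)\cdot u$ becomes $-\frac{d}{dt}\int 2|\nabla\sqrt\rho|^2$ by the continuity equation and the identity $\operatorname{div}(\rho\nabla^2\log\rho)=2\rho\nabla(\Delta\sqrt\rho/\sqrt\rho)$. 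The potential is $\Pi(\rho)=\rho^\gamma/(\gamma-1)$ for $\gamma>1$ and $\Pi(\rho)=\rho\log\rho-\rho+\dots$ for $\gamma=1$.
\item Second, testing the $v$-equation of \eqref{eq:effective-system-1} with $v$ gives
\[
\frac{d}{dt}\int\tfrac12\rho|v|^2+\int\rho|\nabla v|^2+\int\nabla P\cdot v=0.
\]
For the pressure term we write $v=u+\nabla\log\rho$. Then $\int\nabla P\cdot u=\frac{d}{dt}\int\Pi(\rho)$, and $\int\nabla P\cdot\nabla\log\rho=\gamma\int\rho^{\gamma-2}|\nabla\rho|^2\ge0$.
\end{itemize}

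Both identities would be justified by cutoff arguments. Positivity of $\rho$ at finite points and the far-field behaviour \eqref{eq:far-field-vacuum} ensure that the boundary terms vanish as $R\to\infty$.

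Since $\rho|v|^2$ controls $\rho|u|^2$ up to $4|\nabla\sqrt\rho|^2$, the second identity alone bounds $\sup_t\int(\rho|v|^2+\rho^\gamma)$ for $\gamma>1$ uniformly in $T$. Adding the first identity yields $\sup_t\int(\rho|u|^2+|\nabla\sqrt\rho|^2)$ and the dissipation $\int_0^T\!\!\int\rho|\mathcal D(u)|^2$. Both bounds are independent of $T$, because the potential $\Pi\ge0$ is comparable to $\rho^\gamma$ and its initial value is bounded by $\mathcal E_0$.

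The expected obstacle is $\gamma=1$. There $\Pi=\rho\log\rho$ is not sign-definite under far-field vacuum, because $\rho|\log\rho|$ need not be integrable where $\rho\to0$. I would try to avoid $\Pi$ altogether. In the $v$-identity, $\int\nabla\rho\cdot v$ splits as $\int\nabla\rho\cdot u+\int\rho|g|^2$, and $\int\nabla\rho\cdot u=-\int\rho\operatorname{div}u$. The first identity then contains the term $\int\rho\operatorname{div}u$, which Cauchy–Schwarz bounds as
\[
\Bigl|\int\rho\operatorname{div}u\Bigr|\le\varepsilon\int\rho|\mathcal D(u)|^2+C_\varepsilon M_0,
\]
using the mass conservation just proved. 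Grönwall's inequality then gives a bound growing linearly in $T$. This is consistent with the statement that only for $\gamma>1$ is the constant $T$-independent, and the bound on $\int\rho^\gamma=M_0$ is trivial when $\gamma=1$.
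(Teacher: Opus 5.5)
Your proposal is correct and takes essentially the paper's route: you test the momentum equation with $u$ to get the standard energy identity, with $\Pi(\rho)=\rho^\gamma/(\gamma-1)$ when $\gamma>1$ (so the bound is $T$-independent), and for $\gamma=1$ you bound the pressure work directly by $\int\rho\operatorname{div}u\le\int\rho|\mathcal D(u)|^2+\frac N4M_0$ via mass conservation and then integrate in time. The $v$-identity is not needed for this lemma (the paper uses it for Lemma~\ref{lem:effective-velocity-energy}), and no Gr\"onwall step is required because the right-hand side is constant; you should also say explicitly that the initial energy is converted to $\mathcal E_0$ using $u_0=v_0-g_0$ and $\rho_0|g_0|^2=4|\nabla\sqrt{\rho_0}|^2$.
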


\begin{lemma}[Weighted estimates for the effective velocity]
\label{lem:effective-velocity-energy}
For every $0<T<T_*$, there exists
$C_T=C(N,\gamma,M_0,\mathcal E_0,T)>0$ such that
\begin{equation}\label{eq:effective-velocity-energy}
\sup_{0\leq t\leq T}
\int_{\mathbb R^N}\rho|v|^2\,dx
+
\int_0^T\int_{\mathbb R^N}
\left(
\rho|\nabla v|^2
+|\nabla\rho^{\gamma/2}|^2
\right)\,dx\,dt
\leq C_T.
\end{equation}
In addition,
\begin{equation}\label{eq:additional-dissipation}
\int_0^T\int_{\mathbb R^N}
\left(
\rho|\nabla u|^2
+\rho|\nabla^2\log\rho|^2
+|\nabla^2\sqrt\rho|^2
\right)\,dx\,dt
\leq C_T.
\end{equation}
For $\gamma>1$, these constants can be taken independently of $T$.
\end{lemma}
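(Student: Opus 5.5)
We prove Lemma~\ref{lem:effective-velocity-energy} by a BD-type entropy computation carried out directly on the effective form \eqref{eq:effective-system-1}. We then recover the additional dissipation in \eqref{eq:additional-dissipation} from the basic energy estimate of Lemma~\ref{lem:basic-energy}. The integrations by parts will be justified by cutting off with $\phi_R(x)=\phi(x/R)$ and letting $R\to\infty$. This is legitimate because $\rho v\otimes v$, $\rho|\nabla v|$ and the remaining fluxes are integrable on $[0,T]$ for the local solutions of Lemma~\ref{lem: loc sol for gamma>1}, since $v,g$ are bounded and $\rho\in H^3\cap L^1$.

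\textbf{Step 1: the $v$-energy.} We multiply the momentum equation $\rho(\partial_t v+u\cdot\nabla v)+\nabla P=\operatorname{div}(\rho\nabla v)$ by $v$. The effective mass equation can be rewritten as $\rho_t+\operatorname{div}(\rho u)=0$ with $u=v-g$, since $\rho g=\nabla\rho$. Hence the transport structure gives
\[
\frac{d}{dt}\int\frac12\rho|v|^2\,dx+\int\rho|\nabla v|^2\,dx+\int v\cdot\nabla P\,dx=0 .
\]
For the pressure term we write $v=u+g$. The part $\int u\cdot\nabla P$ gives, as usual, $\frac{d}{dt}\int\frac{\rho^\gamma}{\gamma-1}$ (or $\frac{d}{dt}\int\rho\log\rho$ when $\gamma=1$). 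The part $\int g\cdot\nabla P=\gamma\int\rho^{\gamma-2}|\nabla\rho|^2=\frac{4}{\gamma}\int|\nabla\rho^{\gamma/2}|^2$ is nonnegative dissipation. For $\gamma>1$ this closes immediately and gives a bound uniform in $T$. For $\gamma=1$ we handle $\int\rho|\log\rho|$ in the standard way: the negative part $\rho\log\rho\,\mathbf 1_{\rho<1}$ is controlled by the weighted moment $\int\rho|x|^2$, whose growth is bounded via $\frac{d}{dt}\int\rho|x|^2=2\int\rho u\cdot x\le C\bigl(\int\rho|u|^2\bigr)^{1/2}\bigl(\int\rho|x|^2\bigr)^{1/2}$ together with Lemma~\ref{lem:basic-energy}. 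This is where the $T$-dependence arises. It is the main technical point, since with far-field vacuum the isothermal entropy is not sign-definite. If a finite initial moment is not available, we would instead use the identity $\int v\cdot\nabla\rho=-\int\rho\operatorname{div} v$ and absorb it by Cauchy--Schwarz: $\bigl|\int\rho\operatorname{div}v\bigr|\le\frac12\int\rho|\nabla v|^2+CM_0$. Gronwall then gives a bound with linear growth in $T$, and the term $\int|\nabla\rho^{1/2}|^2$ comes from Lemma~\ref{lem:basic-energy}.

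\textbf{Step 2: the additional dissipation.} From $\nabla u=\nabla v-\nabla g$ and $\nabla g=\nabla^2\log\rho$ we get
\[
\int\rho|\nabla^2\log\rho|^2\le 2\int\rho|\nabla v|^2+2\int\rho|\nabla u|^2 .
\]
So it suffices to bound $\int_0^T\int\rho|\nabla u|^2$. For this we use the basic energy estimate together with the identity
\[
\int\rho|\nabla u|^2=\int\rho|\mathcal D(u)|^2+\text{(antisymmetric part)} .
\]
More directly, we expand $\rho|\nabla v|^2=\rho|\nabla u|^2+2\rho\nabla u:\nabla^2\log\rho+\rho|\nabla^2\log\rho|^2$. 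The cross term is handled by the symmetry of $\nabla^2\log\rho$, which gives $\rho\nabla u:\nabla^2\log\rho=\rho\mathcal D(u):\nabla^2\log\rho$. Then
\[
\rho|\nabla v|^2\ge\rho|\nabla u|^2-C\rho|\mathcal D(u)|^2+\tfrac12\rho|\nabla^2\log\rho|^2-\ldots,
\]
and after absorbing terms via Young's inequality one obtains
\[
\int_0^T\!\!\int\rho\bigl(|\nabla u|^2+|\nabla^2\log\rho|^2\bigr)\le C\int_0^T\!\!\int\rho\bigl(|\nabla v|^2+|\mathcal D(u)|^2\bigr)\le C_T .
\]
We expect to need care here so that the $|\nabla u|^2$ and $|\nabla^2\log\rho|^2$ terms do not cancel. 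If they do, we would fall back on the BD identity $\int\rho|\nabla u|^2=\int\rho|\mathcal D(u)|^2+\ldots$, obtained by integrating $\rho(\partial_iu_j\partial_ju_i)$ by parts.

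\textbf{Step 3: the bound on $\nabla^2\sqrt\rho$.} Finally, we use the algebraic identity $\rho\nabla^2\log\rho=2\sqrt\rho\nabla^2\sqrt\rho-2\nabla\sqrt\rho\otimes\nabla\sqrt\rho$. Combined with the classical inequality $\int|\nabla\rho^{1/4}|^4\le C\int\rho|\nabla^2\log\rho|^2$ (Jüngel), which is obtained by integrating by parts in $\int\rho\,\nabla^2\log\rho:\nabla\log\rho\otimes\nabla\log\rho$, this gives
\[
\int|\nabla^2\sqrt\rho|^2\le C\int\rho|\nabla^2\log\rho|^2 .
\]
This yields the last term of \eqref{eq:additional-dissipation}. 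All constants are $T$-independent when $\gamma>1$.
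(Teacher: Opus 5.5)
Your argument is correct, and for $\gamma>1$ Step~1 is the BD-entropy computation that the paper takes over from Huang--Gu--Lei. The paper itself writes out only the case $\gamma=1$.

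\textbf{The case $\gamma=1$.} Your primary route through $\int\rho\log\rho$ and the second moment is not available under the stated hypotheses. Neither a finite moment nor $\rho_0\log\rho_0\in L^1$ is assumed, and the example $\rho_0=(1+|x|^2)^{-2}$ from Remark~\ref{rem:affine-space} has $\int\rho_0|x|^2\,dx=\infty$ for $N=2,3$. So your fallback is the actual proof.

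It differs mildly from the paper's computation. The paper rewrites $\int\rho\operatorname{div}v=\int\rho\operatorname{div}u-4\int|\nabla\sqrt\rho|^2$. It then bounds $\int\rho\operatorname{div}u\le\int\rho|\mathcal D(u)|^2+\frac N4M_0$ using the dissipation of Lemma~\ref{lem:basic-energy}, which yields $\int_0^T\!\int|\nabla\sqrt\rho|^2$ as a genuine dissipation term. You instead absorb $\int\rho\operatorname{div}v$ directly into $\int\rho|\nabla v|^2$. Your $v$-part therefore needs nothing from Lemma~\ref{lem:basic-energy}, but you recover the $\nabla\sqrt\rho$ term only crudely, as $T\sup_t\int|\nabla\sqrt\rho|^2$. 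Either way the constant depends on $T$, as the lemma allows.

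\textbf{The additional dissipation.} The paper gives no argument for \eqref{eq:additional-dissipation}, and your Steps~2--3 supply a valid one.

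Step~2 needs none of the hedging. Since $\nabla^2\log\rho$ is symmetric, Young's inequality gives exactly $|\nabla u|^2+\frac12|\nabla^2\log\rho|^2\le|\nabla v|^2+2|\mathcal D(u)|^2$ pointwise. There are no omitted terms and no possible cancellation; equivalently, $\nabla u$ and $\nabla v$ have the same antisymmetric part. The BD-identity fallback is therefore superfluous.

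In Step~3 the integration by parts in $\int\rho|\nabla\log\rho|^4$ is legitimate. On $[0,T]$, $\nabla\log\rho$ is bounded and $\rho\in L^1$, so that integral is finite and the cutoff terms vanish. The resulting constant depends only on $N$, so the $T$-independence for $\gamma>1$ is preserved.
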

\begin{remark}
    The estimates in Lemma \ref{lem:basic-energy} and Lemma \ref{lem:effective-velocity-energy} remain valid for \(\gamma=1\), with constants depending on \(T\). We follow the calculations in Propositions 3.1–3.2 of Huang--Gu--Lei \cite{Huang-Gu-Lei}, but estimate the pressure work directly using the finite mass.

    Testing $\eqref{eq:quantum-NS}_2$ with \(u\), using the continuity equation and integrating by parts, we have
    \begin{equation}
         \frac{d}{dt} \left(\frac12\int\rho|u|^2+2\int|\nabla\sqrt\rho|^2\right)dx +2\int\rho|\mathcal D(u)|^2 dx=\int\rho\operatorname{div}udx\le \int\rho|\mathcal D(u)|^2dx+\frac N4M_0.
    \end{equation}
    Integrating in time, using $u_0=v_0-\nabla\log\rho_0$ and $\rho_0|g_0|^2=4|\nabla\sqrt{\rho_0}|^2,$ we therefore obtain
    \eqref{eq:basic-energy-estimate}.

    For Lemma \ref{lem:effective-velocity-energy}, testing $\eqref{eq:effective-system-1}_2$ with $v$ gives
    \begin{equation}
        \frac12\frac{d}{dt}\int\rho|v|^2 dx+\int\rho|\nabla v|^2 dx=\int\rho\operatorname{div}vdx.
    \end{equation}
    By using $v=u+\nabla\log\rho$, we arrive at $\int\rho\operatorname{div}v=\int\rho\operatorname{div}u -4\int|\nabla\sqrt\rho|^2.$ Consequently,
   $$\frac12\frac{d}{dt}\int\rho|v|^2 dx+\int\rho|\nabla v|^2dx +4\int|\nabla\sqrt\rho|^2 dx\le \int\rho|\mathcal D(u)|^2dx+\frac N4M_0.$$ 
   Integrating this inequality and applying \eqref{eq:basic-energy-estimate} yields
   \eqref{eq:effective-velocity-energy}.

\end{remark}
\begin{lemma}[Integral bounds for the density]
\label{lem:density-integrability}
For every $0<T<T_*$, one has
\begin{equation}\label{eq:density-integrability}
\sup_{0\leq t\leq T}
\left(
\|\sqrt{\rho(t)}\|_{H^1}
+\|\rho(t)\|_{L^2}
\right)
\leq C_T,
\end{equation}
where $C_T$ depends only on $N$, $\gamma$, $M_0$,
$\mathcal E_0$, and $T$.
If $\gamma>1$, the constant can be chosen independently of $T$.
\end{lemma}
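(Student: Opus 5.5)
Since $\|\sqrt\rho\|_{L^2}^2=\int\rho\,dx=M_0$ by \eqref{eq:mass-conservation}, and $\|\nabla\sqrt\rho\|_{L^2}^2$ is bounded by \eqref{eq:basic-energy-estimate}, the bound on $\sup_t\|\sqrt{\rho(t)}\|_{H^1}$ follows at once. The constant inherits its dependence on $N,\gamma,M_0,\mathcal E_0,T$ from Lemma~\ref{lem:basic-energy}, and it is independent of $T$ when $\gamma>1$.

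For the $L^2$ bound on $\rho$, I will write $\|\rho\|_{L^2}^2=\|\sqrt\rho\|_{L^4}^4$ and use the Sobolev embedding $H^1(\mathbb R^N)\hookrightarrow L^4(\mathbb R^N)$, which holds for $N=2,3$. This gives
\[
\|\rho\|_{L^2}=\|\sqrt\rho\|_{L^4}^2\le C\|\sqrt\rho\|_{H^1}^2 .
\]
More precisely, Gagliardo--Nirenberg gives $\|f\|_{L^4}\le C\|f\|_{L^2}^{1-N/4}\|\nabla f\|_{L^2}^{N/4}$. Taking $f=\sqrt\rho$ yields
\[
\|\rho\|_{L^2}\le C M_0^{(4-N)/4}\|\nabla\sqrt\rho\|_{L^2}^{N/2},
\]
which is controlled by the previous step.

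All constants therefore come only from mass conservation and the energy estimate, so the time-independence for $\gamma>1$ carries over.

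The only point needing care is that the argument uses no lower bound on $\rho$. It only requires $\sqrt\rho\in H^1$, and this holds for the regular solutions under consideration because $\rho>0$ and $\rho\in H^3$. The constant-$\sqrt\rho$ issue does not arise, since $\sqrt\rho\in L^2$. I therefore expect no real obstacle.
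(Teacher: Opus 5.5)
Your argument is correct and is essentially the one the paper intends. The paper states this lemma without a separate proof, as an adaptation of Huang--Gu--Lei, and it follows exactly from mass conservation, the energy bound for $\nabla\sqrt\rho$, and the embedding $H^1(\mathbb R^N)\hookrightarrow L^4(\mathbb R^N)$ for $N=2,3$. One small correction to your regularity remark: $\rho>0$ and $\rho\in H^3$ alone do not give $\nabla\sqrt\rho\in L^2$ when $\rho\to0$ at infinity; finiteness instead comes from $|\nabla\sqrt\rho|^2=\frac14\rho|\nabla\log\rho|^2$ with $\rho\in L^1$ and $\nabla\log\rho\in L^\infty$ on the local interval (or directly from the energy estimate), so your conclusion is unaffected.
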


\begin{lemma}[Higher integrability of the effective velocity]
\label{lem:effective-velocity-higher-integrability}
Choose $q$ such that
\begin{equation}\label{eq:choice-of-q}
\begin{cases}
q>0, & N=2,\\[1mm]
1<q<4,\qquad
\displaystyle\gamma\leq\frac{2q+6}{q+2}, & N=3.
\end{cases}
\end{equation}
Such a choice is possible for every $\gamma$ satisfying
\eqref{eq:admissible-gamma}.
Assume additionally that
\begin{equation}\label{eq:initial-velocity-moment}
\mathcal V_{0,q}
:=\int_{\mathbb R^N}\rho_0|v_0|^{q+2}\,dx<\infty.
\end{equation}
Then, for every $0<T<T_*$,
\begin{equation}\label{eq:effective-velocity-higher-integrability}
\begin{aligned}
&\sup_{0\leq t\leq T}
\int_{\mathbb R^N}\rho|v|^{q+2}\,dx\\
&\quad+
\int_0^T\int_{\mathbb R^N}
\rho|v|^q
\left(
|\nabla v|^2+q|\nabla|v||^2
\right)\,dx\,dt
\leq C_T,
\end{aligned}
\end{equation}
where $C_T$ depends only on $N$, $\gamma$, $q$, $M_0$,
$\mathcal E_0$, $\mathcal V_{0,q}$, and $T$.
In particular,
\begin{equation}\label{eq:weighted-velocity-Lq}
\sup_{0\leq t\leq T}
\|\rho^{1/(q+2)}v(t)\|_{L^{q+2}}
\leq C_T.
\end{equation}
\end{lemma}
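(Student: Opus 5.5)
We test the effective momentum equation $\eqref{eq:effective-system-1}_2$ against $|v|^q v$, following the scheme of Proposition 3.4 in Huang--Gu--Lei \cite{Huang-Gu-Lei}. The vanishing far-field density is handled by the weighted structure: every integral carries a factor $\rho$, except the pressure term, which we control with the mass and the energy bounds. Using the continuity equation $\rho_t+\operatorname{div}(\rho u)=0$, the transport part gives exactly $\frac1{q+2}\frac{d}{dt}\int\rho|v|^{q+2}$. The viscous term $\operatorname{div}(\rho\nabla v)$ yields, after integration by parts,
\[
\int\rho|v|^q|\nabla v|^2+q\int\rho|v|^q|\nabla|v||^2 ,
\]
which is the dissipation in \eqref{eq:effective-velocity-higher-integrability}. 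Boundary terms at infinity vanish by the regularity of the solution, via a cutoff argument if needed. This leaves
\[
\frac1{q+2}\frac{d}{dt}\int\rho|v|^{q+2}+\int\rho|v|^q\bigl(|\nabla v|^2+q|\nabla|v||^2\bigr)
=-\int\nabla\rho^\gamma\cdot|v|^qv .
\]

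For the pressure term we integrate by parts once more:
\[
\int\rho^\gamma\operatorname{div}(|v|^qv)\le C(q)\int\rho^\gamma|v|^q|\nabla v|.
\]
Cauchy--Schwarz bounds this by
\[
\tfrac12\int\rho|v|^q|\nabla v|^2+C\int\rho^{2\gamma-1}|v|^q .
\]
Writing $\rho^{2\gamma-1}|v|^q=(\rho|v|^{q+2})^{q/(q+2)}\rho^{\theta}$ with $\theta=2\gamma-1-\frac{q}{q+2}$, Hölder gives
\[
\int\rho^{2\gamma-1}|v|^q\le\Bigl(\int\rho|v|^{q+2}\Bigr)^{q/(q+2)}\|\rho\|_{L^{\theta(q+2)/2}}^{\theta}.
\]
It remains to bound $\|\rho\|_{L^{s}}$ with $s=\frac{(2\gamma-1)(q+2)-q}{2}=(\gamma-1)(q+2)+1$.

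Lemma~\ref{lem:density-integrability} gives $\sqrt\rho\in L^\infty_tH^1$, hence $\rho\in L^\infty_t L^{p}$ for $p\in[1,\infty)$ when $N=2$, so any $q>0$ works there. When $N=3$, $\sqrt\rho\in L^6$ only gives $\rho\in L^\infty_tL^3$. For larger $s$ we use time integrability: Lemma~\ref{lem:effective-velocity-energy} provides $\nabla^2\sqrt\rho\in L^2_{t,x}$ and $\nabla\rho^{\gamma/2}\in L^2_{t,x}$. By Gagliardo--Nirenberg, this puts $\sqrt\rho$ in $L^2_tL^\infty$ and hence $\rho\in L^1_tL^\infty$. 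Interpolating,
\[
\|\rho\|_{L^s}^\theta\le\|\rho\|_{L^3}^{\theta(1-a)}\|\rho\|_{L^\infty}^{\theta a},
\]
and we need $\theta a\le1$ so that the coefficient is integrable in time. Then, with $Y=\int\rho|v|^{q+2}$, we obtain $Y'\le C f(t)\,(1+Y)$ with $f\in L^1(0,T)$, and Gronwall closes, using $\mathcal V_{0,q}<\infty$. Estimate \eqref{eq:weighted-velocity-Lq} is then immediate.

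The main obstacle is checking that the exponent arithmetic in three dimensions matches exactly $\gamma\le\frac{2q+6}{q+2}$ with $1<q<4$. If $\rho\in L^1_tL^\infty$ alone is insufficient, the first thing to try is a different split: keep part of $\nabla\rho^\gamma$ without integrating by parts and use $\nabla\rho^{\gamma/2}\in L^2_{t,x}$ together with the $L^3$ bound, balancing via Young so that the power of $Y$ is at most one. One would also verify that admissible $q$ exist for all $\gamma<8/3$; indeed $\frac{2q+6}{q+2}\to 8/3$ as $q\downarrow1$, which recovers the stated range.
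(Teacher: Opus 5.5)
Your scheme is sound: testing with $|v|^qv$, integrating the pressure by parts, absorbing $\frac12\int\rho|v|^q|\nabla v|^2$, and using H\"older with $Y^{q/(q+2)}$ all work, and the two-dimensional case closes as you say. The paper does not write this proof out; it defers to the corresponding estimate of Huang--Gu--Lei \cite{Huang-Gu-Lei}. The gap is the three-dimensional exponent count, which you left unchecked and which fails with the bound you use. Since $\theta=2s/(q+2)$ and $a=1-3/s$, one has $\theta a=\frac{2(s-3)}{q+2}=2(\gamma-1)-\frac{4}{q+2}$. So the requirement $\theta a\le1$, coming from $\rho\in L^1(0,T;L^\infty)$, is $\gamma\le\frac32+\frac{2}{q+2}$. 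For every admissible $q$ this misses the pairs with $\frac32+\frac{2}{q+2}<\gamma\le\frac{2q+6}{q+2}$. In particular, since $q>1$, no $\gamma\ge 13/6$ is reached, whereas the lemma claims the range up to $8/3$.

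The loss comes from bounding $\sqrt\rho$ in $L^2_tH^2\subset L^2_tL^\infty$. Use instead the scale-invariant Gagliardo--Nirenberg inequality in $\mathbb R^3$, $\|f\|_{L^\infty}\le C\|\nabla f\|_{L^2}^{1/2}\|\nabla^2 f\|_{L^2}^{1/2}$, with $f=\sqrt\rho$:
\[
\|\rho(t)\|_{L^\infty}\le C\,\|\nabla\sqrt{\rho(t)}\|_{L^2}\,\|\nabla^2\sqrt{\rho(t)}\|_{L^2}.
\]
The first factor is bounded uniformly on $[0,T]$ by Lemma~\ref{lem:basic-energy}. The second is square integrable in time by \eqref{eq:additional-dissipation}. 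Hence $\rho\in L^2(0,T;L^\infty)$, not merely $L^1$.

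Now it suffices that $\theta a\le 2$, which is exactly $\gamma\le 2+\frac{2}{q+2}=\frac{2q+6}{q+2}$, the hypothesis of the lemma. Then
\[
\|\rho\|_{L^s}^\theta\le\|\rho\|_{L^3}^{6/(q+2)}\|\rho\|_{L^\infty}^{\theta a}\le C_T\bigl(1+\|\rho\|_{L^\infty}^2\bigr)\in L^1(0,T),
\]
and your Gronwall argument closes. For $s\le 3$ no time integrability is needed. The alternative split you suggest is therefore unnecessary. Your argument never uses the restriction $1<q<4$, which is harmless.
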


\begin{lemma}[Upper bound for the density]
\label{lem:density-upper-bound}
Under the assumptions of
Lemma~\ref{lem:effective-velocity-higher-integrability},
for every $0<T<T_*$ there exists a constant $C_T>0$ such that
\begin{equation}\label{eq:density-upper-bound}
\sup_{0\leq t\leq T}\|\rho(t)\|_{L^\infty}\leq C_T.
\end{equation}
The constant depends only on $N$, $\gamma$, $q$, $M_0$,
$\mathcal E_0$, $\mathcal V_{0,q}$,
$\|\rho_0\|_{L^\infty}$, and $T$.
In particular, this estimate is independent of any positive
lower bound for the density.
\end{lemma}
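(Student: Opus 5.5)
We propose to obtain the $L^\infty$ bound for $\rho$ from the parabolic form of the continuity equation, $\rho_t-\Delta\rho=-\operatorname{div}(\rho v)$ in \eqref{eq:effective-system-1}. This is a heat equation with a divergence-form drift. The drift $\rho v$ is controlled in weighted Lebesgue spaces by Lemma~\ref{lem:effective-velocity-higher-integrability} and Lemma~\ref{lem:density-integrability}. We then run a De Giorgi (or Moser) iteration on the truncations $(\rho-k)_+$. The important point is that truncations from above only see the region where $\rho$ is large, so no lower bound for $\rho$ ever enters.

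The first step is the energy inequality for the truncations. For $k\ge\|\rho_0\|_{L^\infty}$, test the equation with $(\rho-k)_+$ and use the far-field condition \eqref{eq:far-field-vacuum}; this implies $(\rho-k)_+$ has compact support in $x$ for each $t$, so the integrations by parts are justified. We obtain
\[
\frac12\frac{d}{dt}\int(\rho-k)_+^2\,dx+\int|\nabla(\rho-k)_+|^2\,dx
=\int\rho v\cdot\nabla(\rho-k)_+\,dx .
\]
Young's inequality bounds the right-hand side by $\tfrac12\int|\nabla(\rho-k)_+|^2+\tfrac12\int_{\{\rho>k\}}\rho^2|v|^2$. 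Next we write $\rho^2|v|^2=\rho^{2-2/(q+2)}\,\bigl(\rho^{1/(q+2)}|v|\bigr)^2$. Hölder's inequality with \eqref{eq:weighted-velocity-Lq} then bounds this term by
\[
\bigl\|\rho^{1/(q+2)}v\bigr\|_{L^{q+2}}^2\,
\bigl\|\rho^{\frac{2(q+1)}{q+2}}\mathbf 1_{\{\rho>k\}}\bigr\|_{L^{\frac{q+2}{q}}} .
\]
On $\{\rho>k\}$ with $k\ge1$ we have $\rho\le 2(\rho-k)_+ + 2k$. This produces a polynomial expression in $(\rho-k)_+$ and in $k$ times the measure of $A_k(t)=\{\rho(t)>k\}$. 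The measure of $A_k$ is bounded a priori by $M_0/k$, using the mass conservation \eqref{eq:mass-conservation}.

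The second step is the iteration itself. Set $k_j=K(2-2^{-j})$ and $Y_j=\sup_t\int(\rho-k_j)_+^2+\int_0^T\!\int|\nabla(\rho-k_j)_+|^2$. The parabolic Sobolev embedding $L^\infty_tL^2_x\cap L^2_t\dot H^1_x\hookrightarrow L^{2(N+2)/N}_{t,x}$ and Chebyshev-type estimates for $|A_{k_{j+1}}|$ should give a recursion $Y_{j+1}\le C b^j K^{-\delta}Y_j^{1+\epsilon}$ for some $\epsilon>0$. The needed superlinearity comes from the exponent $\frac{q+2}{q}$ together with the extra integrability of $\rho$: by Lemma~\ref{lem:density-integrability}, $\sqrt\rho\in L^\infty_tH^1$, hence $\rho\in L^\infty_t L^{p}$ with $p=3$ if $N=3$ and any $p<\infty$ if $N=2$. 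We also have $\nabla\rho^{\gamma/2}\in L^2_{t,x}$ from \eqref{eq:effective-velocity-energy}. Choosing $K$ large in terms of $C_T$ makes $Y_0$ small relative to the constants, and then $Y_j\to0$, which gives $\rho\le 2K$.

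The main obstacle is the exponent count in dimension three. The weight $\rho^{2(q+1)/(q+2)}$ must be absorbed using only the available integrability ($\rho\in L^\infty_tL^3$, $\rho^{\gamma/2}\in L^2_tH^1$) while still leaving $\epsilon>0$ in the recursion. This is exactly where the constraint $\gamma\le\frac{2q+6}{q+2}$ and $q<4$ from \eqref{eq:choice-of-q} should enter, as in Proposition~3.5 of Huang--Gu--Lei \cite{Huang-Gu-Lei}. If the direct Hölder count fails, we would instead first bootstrap $\rho\in L^\infty_tL^r$ for large $r$, by testing with $\rho^{r-1}$ and handling the same drift term. We would then start De Giorgi from that improved integrability, where the recursion closes comfortably.
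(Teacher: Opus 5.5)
The paper writes out no proof of this lemma; it only says it adapts Proposition~3.5 of \cite{Huang-Gu-Lei} to finite mass. Your framework is the right one for such an argument: upper truncations of the linear equation $\rho_t-\Delta\rho=-\operatorname{div}(\rho v)$, driven only by \eqref{eq:weighted-velocity-Lq}, never involve a lower bound for $\rho$. The gap is in the iteration. On $A_k=\{\rho>k\}$ with $k\ge1$, the information on $v$ you are using is $v\mathbf 1_{A_k}\in L^\infty_tL^{q+2}_x$. This, not the integrability of $\rho$, decides whether the recursion is superlinear. You propose to use only the isotropic embedding into $L^{2(N+2)/N}_{t,x}$ and space--time Chebyshev. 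That amounts to treating $v$ as an $L^{q+2}_{t,x}$ drift, and the right-hand side of your truncated energy inequality then has degree $\frac{N+2}{N}\cdot\frac{q}{q+2}$ in $Y_j$, which exceeds $1$ only when $q>N$. For $N=3$, the constraint $\gamma\le\frac{2q+6}{q+2}$ forces $q\le3$ once $\gamma\ge 12/5$; for $N=2$, every $q\le2$ is lost. No extra integrability of $\rho$ changes this count: not $\rho\in L^\infty_tL^3$, not $\nabla\rho^{\gamma/2}\in L^2_{t,x}$, not a bootstrapped $\rho\in L^\infty_tL^r$. So your fallback does not close comfortably either. You have also misplaced the hypotheses. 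The conditions $\gamma\le\frac{2q+6}{q+2}$ and $q<4$ are used up in Lemma~\ref{lem:effective-velocity-higher-integrability}; the condition that matters here is $q+2>N$, i.e.\ $q>1$ in \eqref{eq:choice-of-q} when $N=3$.

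The missing idea is to exploit that \eqref{eq:weighted-velocity-Lq} holds at each fixed time. Write $\rho v\mathbf 1_{A_k}=(w+k)v\mathbf 1_{A_k}$ with $w=(\rho-k)_+$, and use $|v|\mathbf 1_{A_k}\le k^{-1/(q+2)}\rho^{1/(q+2)}|v|$. The homogeneous part satisfies $\int w^2|v|^2\mathbf 1_{A_k}\le Ck^{-2/(q+2)}\|w\|_{L^{2(q+2)/q}}^2$. The Gagliardo--Nirenberg inequality at fixed time, with exponent $\theta=N/(q+2)<1$, absorbs this into $\|\nabla w\|_{L^2}^2$ up to $C\|w\|_{L^2}^2$, which Gronwall handles. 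The inhomogeneous part is at most $Ck^{2(q+1)/(q+2)}|A_k(t)|^{q/(q+2)}$. Estimating $\int_0^T|A_k(t)|^{q/(q+2)}\,dt$ through the mixed norms $L^a_tL^b_x$ with $2/a+N/b=N/2$ gives a superlinear recursion exactly when $q+2>N$, with the gain $K^{-2/(q+2)}$ you anticipated. With this count your own splitting $\rho^2|v|^2=\rho^{2(q+1)/(q+2)}(\rho^{1/(q+2)}|v|)^2$ also closes; it is the isotropic embedding that fails. Alternatively, run a Moser iteration. Testing with $\rho^{p-1}$ gives $\frac{d}{dt}\int\rho^p+c\int|\nabla\rho^{p/2}|^2\le Cp^2\int\rho^p|v|^2$. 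The part over $\{\rho\le1\}$ is bounded by \eqref{eq:weighted-velocity-Lq} and $M_0$, and the part over $\{\rho>1\}$ is handled by the same Gagliardo--Nirenberg step. Iterating $p=2^j$ from mass conservation then gives \eqref{eq:density-upper-bound}, using only $q+2>N$.
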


\begin{remark}
All the preceding estimates are independent of a uniform
positive lower bound for $\rho$ and are compatible with
$\rho(x,t)\to0$ as $|x|\to\infty$.
For $\gamma=1$, the quantity $\rho^\gamma$ appearing above
is simply $\rho$ and is controlled by mass conservation.
The corresponding bounds are estimates on finite time intervals;
no time-independent bound based on a nonnegative isothermal
internal energy is asserted.
\end{remark}
\subsection{A comparison principle}
\begin{lemma}[Comparison principle]
    \label{lem:max}
    Let $Z$ be bounded and continuous on
    $[0,S]\times\mathbb R^N$. Assume that, for every $R>0$,
    \[
    Z\in H^1(0,S;L^2(B_R))
    \cap L^2(0,S;H^2(B_R)).
    \]
    Let $b:(0,S)\times\mathbb R^N\to\mathbb R^N$
    be measurable and satisfy
    \[
    \|b\|_{L^\infty((0,S)\times\mathbb R^N)}
    \le M<\infty.
    \]
    Suppose that
    \[
    \mathcal LZ
    :=
    \bigl(\partial_t-\Delta+b(t,x)\cdot\nabla\bigr)Z
    \le0
    \]
    almost everywhere in $(0,S)\times\mathbb R^N$.
    Then
    \begin{equation}\label{eq:maxprinciple}
        Z(t,x)
        \le
        \sup_{y\in\mathbb R^N}Z(0,y),
        \qquad
        0\le t\le S,\quad x\in\mathbb R^N.
    \end{equation}
    In particular, the right-hand side of
    \eqref{eq:maxprinciple} is independent of $M$.
\end{lemma}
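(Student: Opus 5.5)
The plan is a barrier argument: first reduce to a problem whose positive part is compactly supported, and then run an $L^2$ energy estimate on the positive part. That way we never need a pointwise maximum principle for merely measurable drifts.

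Set $K:=\sup_{y}Z(0,y)<\infty$ and $\varphi(x):=\sqrt{1+|x|^2}$. Then $|\nabla\varphi|\le1$ and $|\Delta\varphi|\le N$, so
\[
\bigl(\partial_t-\Delta+b\cdot\nabla\bigr)\bigl(\varphi+(N+M+1)t\bigr)\ge -N-M+N+M+1=1 .
\]
For $\varepsilon>0$ let
\[
W_\varepsilon:=Z-K-\varepsilon\bigl(\varphi+(N+M+1)t\bigr).
\]
Then $\mathcal LW_\varepsilon\le-\varepsilon<0$ a.e. Also $W_\varepsilon(0,\cdot)\le0$. Since $Z$ is bounded and $\varphi\to\infty$, there is $R=R(\varepsilon)$ such that $W_\varepsilon<0$ on $[0,S]\times\{|x|\ge R/2\}$. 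So $(W_\varepsilon)_+$ is supported in $[0,S]\times B_{R/2}$.

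Next I would do the energy estimate on $B_R$. On this ball $W_\varepsilon\in H^1(0,S;L^2(B_R))\cap L^2(0,S;H^2(B_R))$, because $\varphi$ is smooth there. Hence $t\mapsto\frac12\int_{B_R}(W_\varepsilon)_+^2$ is absolutely continuous, with derivative $\int_{B_R}\partial_tW_\varepsilon\,(W_\varepsilon)_+$; this is the standard chain rule for the positive part in $H^1(0,S;L^2)\cap L^2(0,S;H^1)$. Multiply the inequality $\partial_tW_\varepsilon-\Delta W_\varepsilon+b\cdot\nabla W_\varepsilon\le0$ by $(W_\varepsilon)_+\ge0$ and integrate over $B_R$. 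The boundary terms vanish because $(W_\varepsilon)_+=0$ near $\partial B_R$. Using $\nabla W_\varepsilon\,(W_\varepsilon)_+=\nabla(W_\varepsilon)_+\,(W_\varepsilon)_+$, we get
\[
\frac12\frac{d}{dt}\int_{B_R}(W_\varepsilon)_+^2+\int_{B_R}|\nabla (W_\varepsilon)_+|^2
\le M\int_{B_R}|\nabla(W_\varepsilon)_+|(W_\varepsilon)_+
\le\frac12\int_{B_R}|\nabla (W_\varepsilon)_+|^2+\frac{M^2}{2}\int_{B_R}(W_\varepsilon)_+^2 .
\]
Gronwall's inequality with $(W_\varepsilon)_+(0)=0$ then gives $(W_\varepsilon)_+\equiv0$ a.e. Since $Z$ is continuous, this holds everywhere, so
\[
Z(t,x)\le K+\varepsilon\bigl(\varphi(x)+(N+M+1)t\bigr)\quad\text{for all }(t,x).
\]
Letting $\varepsilon\to0$ at fixed $(t,x)$ yields \eqref{eq:maxprinciple}. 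The bound is independent of $M$, since $M$ enters only through the $\varepsilon$-term and the Gronwall constant, both of which disappear.

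The main obstacle is the lack of decay or integrability of $Z$ on $\mathbb R^N$. A direct $L^2$ argument on the whole space is unavailable, and the drift $b$ is only bounded and measurable, so a classical pointwise maximum-point argument does not apply either. The barrier $\varepsilon(\varphi+Ct)$ resolves both issues at once. It grows only linearly, so its drift term $b\cdot\nabla\varphi$ stays bounded and can be absorbed by the time term. It also forces compact support of the positive part, which makes the local regularity hypotheses sufficient to justify the energy identity. The one technical point to check carefully is the time-derivative identity for $\int(W_\varepsilon)_+^2$ under the stated regularity.
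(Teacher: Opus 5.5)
Your proposal is correct and follows essentially the same route as the paper. Both arguments use a barrier that grows at spatial infinity to make the positive part compactly supported, an $L^2$ energy estimate on that positive part with the drift absorbed by Young's inequality, Gronwall, continuity, and finally removal of the barrier. The only difference is the barrier itself: the paper uses the multiplicative $\Psi=e^{\lambda t}(1+|x|^2)$ with $\lambda=2N+M^2+1$, while your additive $\sqrt{1+|x|^2}+(N+M+1)t$ works equally well, since $|\nabla\varphi|\le 1$ and $0<\Delta\varphi\le N$.
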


\begin{proof}
    \textbf{Step 1: Construction of a barrier growing at spatial infinity.}
    Set
    \begin{equation}\label{eq:barrier}
        \lambda=2N+M^2+1,
        \qquad
        \Psi(t,x)=e^{\lambda t}(1+|x|^2).
    \end{equation}
    Direct computation gives
    \[
    \partial_t\Psi
    =
    \lambda e^{\lambda t}(1+|x|^2),
    \qquad
    \nabla\Psi=2e^{\lambda t}x,
    \qquad
    \Delta\Psi=2Ne^{\lambda t}.
    \]
    Since
    \[
    2b\cdot x
    \ge-2M|x|
    \ge-M^2-|x|^2
    \]
    almost everywhere, we have
    \begin{align}\label{eq:Lbarrier}
        \mathcal L\Psi
        &=
        e^{\lambda t}
        \bigl[
            \lambda(1+|x|^2)-2N+2b\cdot x
        \bigr]
        \notag\\
        &\ge
        e^{\lambda t}
        \bigl[
            (\lambda-2N-M^2)
            +(\lambda-1)|x|^2
        \bigr]
        \notag\\
        &=
        e^{\lambda t}
        \bigl[
            1+(2N+M^2)|x|^2
        \bigr]
        >0.
    \end{align}

    \textbf{Step 2: Localization of the positive part.}
    Let
    \[
    m_0=\sup_{y\in\mathbb R^N}Z(0,y).
    \]
    For any $\delta>0$, define
    \[
    Q_\delta(t,x)
    =
    Z(t,x)-m_0-\delta\Psi(t,x).
    \]
    At the initial time,
    \[
    Q_\delta(0,x)
    \le-\delta(1+|x|^2)<0.
    \]
    Since $Z$ is bounded on the entire time interval,
    \[
    Q_\delta(t,x)\to-\infty
    \qquad\text{as }|x|\to\infty,
    \]
    uniformly for $t\in[0,S]$.
    We may therefore choose $R_\delta>0$ sufficiently large
    that
    \[
    Q_\delta(t,x)<0
    \qquad
    \text{whenever }|x|\ge R_\delta/2,\quad 0\le t\le S.
    \]
    Set
    \[
    w_\delta=(Q_\delta)_+.
    \]
    Then $w_\delta$ vanishes outside $B_{R_\delta/2}$
    and has zero boundary trace on $B_{R_\delta}$.
    Moreover, \eqref{eq:Lbarrier} implies
    \begin{equation}\label{eq:LQ}
        \mathcal LQ_\delta
        =
        \mathcal LZ-\delta\mathcal L\Psi
        \le-\delta\mathcal L\Psi<0
    \end{equation}
    almost everywhere.

    \textbf{Step 3: Vanishing of the positive part.}
    The assumed regularity and the smoothness of $\Psi$ give
    \[
    Q_\delta
    \in H^1(0,S;L^2(B_{R_\delta}))
    \cap L^2(0,S;H^2(B_{R_\delta})).
    \]
    Consequently,
    \[
    w_\delta
    \in C([0,S];L^2(B_{R_\delta}))
    \cap L^2(0,S;H_0^1(B_{R_\delta})),
    \]
    and the Sobolev chain rule yields
    \[
    \int_{B_{R_\delta}}
        \partial_tQ_\delta\,w_\delta\,dx
    =
    \frac12\frac{d}{dt}
        \|w_\delta\|_{L^2(B_{R_\delta})}^2
    \]
    for almost every $t\in(0,S)$.
    We also have
    \[
    \nabla w_\delta
    =
    \mathbf 1_{\{Q_\delta>0\}}\nabla Q_\delta
    \qquad\text{almost everywhere}.
    \]

    Multiplying \eqref{eq:LQ} by $w_\delta$ and integrating
    over $B_{R_\delta}$, we obtain
    \[
    \frac12\frac{d}{dt}
        \|w_\delta\|_{L^2(B_{R_\delta})}^2
    +
        \|\nabla w_\delta\|_{L^2(B_{R_\delta})}^2
    \le
    -\int_{B_{R_\delta}}
        (b\cdot\nabla w_\delta)w_\delta\,dx.
    \]
    The diffusion term has been integrated by parts using
    the zero boundary trace of $w_\delta$.
    We estimate the drift term directly:
    \[
    \begin{aligned}
    \left|
        \int_{B_{R_\delta}}
        (b\cdot\nabla w_\delta)w_\delta\,dx
    \right|
    &\le
    M\|\nabla w_\delta\|_{L^2(B_{R_\delta})}
      \|w_\delta\|_{L^2(B_{R_\delta})}\\
    &\le
    \frac12\|\nabla w_\delta\|_{L^2(B_{R_\delta})}^2
    +
    \frac{M^2}{2}\|w_\delta\|_{L^2(B_{R_\delta})}^2.
    \end{aligned}
    \]
    In particular, no derivative of $b$ is required.
    It follows that
    \begin{equation}\label{eq:positive-part-energy}
        \frac{d}{dt}
            \|w_\delta\|_{L^2(B_{R_\delta})}^2
        +
            \|\nabla w_\delta\|_{L^2(B_{R_\delta})}^2
        \le
        M^2\|w_\delta\|_{L^2(B_{R_\delta})}^2.
    \end{equation}
    Since $w_\delta(0)=0$, Gronwall's inequality gives
    \[
    \|w_\delta(t)\|_{L^2(B_{R_\delta})}^2=0,
    \qquad 0\le t\le S.
    \]
    Thus $Q_\delta(t,\cdot)\le0$ almost everywhere
    for every $t\in[0,S]$.
    By continuity, this inequality holds at every spatial point.
    Together with the negativity outside $B_{R_\delta}$,
    we conclude that
    \[
    Q_\delta(t,x)\le0,
    \qquad
    (t,x)\in[0,S]\times\mathbb R^N.
    \]

    \textbf{Step 4: Removal of the barrier.}
    For each fixed $(t,x)$, the preceding inequality gives
    \[
    Z(t,x)
    \le
    m_0+\delta e^{\lambda t}(1+|x|^2).
    \]
    Keeping $t$ and $x$ fixed,
    we let $\delta\downarrow0$ to obtain
    \eqref{eq:maxprinciple}.

    Although $\lambda$ depends on $M$, it appears only
    in the barrier term, which vanishes in this limit.
    The energy argument gives a zero positive part
    for every finite $M$.
    Hence $M$ does not enter the final comparison bound.
\end{proof}
\subsection{Main Strategies}
For initial densities bounded away from zero, or approaching a
strictly positive far-field state, Huang et al.\
\cite{Huang-Meng-Zhang,Huang-Gu-Lei} developed refined Moser
iteration and De Giorgi arguments to establish a control
relation between
\[
\|v\|_{L^\infty}
\quad\text{and}\quad
\sqrt{\log\left(e^{25/9}+\|\rho^{-1}\|_{L^\infty}\right)}.
\]
This relation enabled them to obtain both an upper bound for
the effective velocity and a positive lower bound for the density. 

These arguments, however, rely crucially on the density remaining
uniformly positive. They therefore do not apply directly to the
far-field vacuum setting, where $\rho^{-1}$ is unbounded.
This motivates the search for an estimate of the effective
velocity that does not require a uniform positive lower bound
for the density. In this paper, we show that the intrinsic
structure of the equations provides such an estimate: once
an upper bound for the density is available, an $L^\infty$
bound for the effective velocity follows without invoking
a lower bound for the density. 

The continuation of the local strong solution to a global one is based on the following blow-up criterion:
\begin{equation}
T_{*}<\infty
\quad\Longrightarrow\quad
\limsup_{t\uparrow T_{*}}
\left(
\|\rho(t)\|_{L^\infty}
+\|v(t)\|_{L^\infty}
+\|\nabla\log\rho(t)\|_{L^\infty}
\right)=\infty.
\end{equation}
Following the argument of Huang--Gu--Lei \cite{Huang-Gu-Lei}, one can obtain a uniform upper bound for the density $\rho$. Hence, in view of the above blow-up criterion, it remains to control
\[
\|v(t)\|_{L^\infty}
\qquad\text{and}\qquad
\|\nabla\log\rho(t)\|_{L^\infty}.
\]
The rest of this section is therefore devoted to deriving the required estimates for these two quantities.
\subsubsection{Outline of the effective velocity estimate}
\paragraph{The effective velocity when $\gamma>1$.}
We rewrite the original system in the form
\begin{equation}
\begin{cases}
\displaystyle
\partial_t c-\Delta c
+\bigl(v-2\nabla\log\rho\bigr)\cdot\nabla c
=
-\frac{\gamma-1}{2}c\,\operatorname{div}v
-\frac{\gamma^2-1}{4}c|\nabla\log\rho|^2,
\\[3mm]
\displaystyle
\partial_t v-\Delta v
+\bigl(v-2\nabla\log\rho\bigr)\cdot\nabla v
=
-c^2\nabla\log\rho,
\end{cases}
\qquad
c=\sqrt{\gamma}\,\rho^{\frac{\gamma-1}{2}}.
\end{equation}
The idea is to combine the equation for $|v|$ with that for
$2c/(\gamma-1)$ and apply the maximum principle to their sum.
To avoid the singularity of $|v|$ at $v=0$, we work with
\[
s=\sqrt{|v|^2+\varepsilon^2},\qquad
e=\frac{v}{s},\qquad
W_\varepsilon=s+\frac{2c}{\gamma-1}.
\]
Multiplying the equation for $c$ by $2/(\gamma-1)$ and combining
it with the equation for $s$, we absorb the pressure
contribution into an additional drift $ce$.
Completing the square in the remaining velocity-gradient terms
then gives, with $g=\nabla\log\rho$,
\begin{equation}\label{eq:velocity-strategy}
\begin{aligned}
&\underbrace{\partial_tW_\varepsilon-\Delta W_\varepsilon}
_{\text{parabolic part}} +\underbrace{(v-2g+ce)\cdot\nabla W_\varepsilon}
_{\text{bounded drift on the local interval}}\\
&\qquad=
\underbrace{-\mathcal Q_\varepsilon
-\frac{\gamma+1}{2}c|g|^2}_{\text{nonpositive terms}}
+\underbrace{\frac{c^2s}{4}
\bigl(N-|e|^2\bigr)}
_{\displaystyle\le k(t)W_\varepsilon},
\end{aligned}
\end{equation}
where $\mathcal Q_\varepsilon\ge0$ is the term obtained by
completing the square, and
\[
k(t)=\frac{N\gamma}{4}
\|\rho(t)\|_{L^\infty}^{\gamma-1}.
\]

Thus the only positive contribution is a zeroth-order term
controlled by the density upper bound. It is removed by setting
\[
Z_\varepsilon(t,x)
=\exp\left(-\int_0^t k(\tau)\,\mathrm d\tau\right)
W_\varepsilon(t,x).
\]
Indeed, \eqref{eq:velocity-strategy} implies
\[
\underbrace{
\left[\partial_t-\Delta+
(v-2g+ce)\cdot\nabla\right]Z_\varepsilon
\le0
}_{\text{the form required by the maximum principle}}.
\]
The comparison principle bounds $Z_\varepsilon$ by its initial
supremum. Letting $\varepsilon\downarrow0$ therefore yields
\[
\|v(t)\|_{L^\infty}
\le
\left\||v_0|+\frac{2c_0}{\gamma-1}\right\|_{L^\infty}
\exp\left(
\frac{N\gamma}{4}
\int_0^t\|\rho(\tau)\|_{L^\infty}^{\gamma-1}
\,\mathrm d\tau
\right).
\]
The local boundedness of the drift is needed only to justify
the comparison argument and its size does not enter the resulting
estimate. In particular, this velocity bound requires no
uniform positive lower bound for the density.
\paragraph{The effective velocity when $\gamma=1$.}
The preceding sound-speed quantity is singular as $\gamma\downarrow1$.
Writing $h=\log\rho$ and $g=\nabla h$, we instead use
\[
\mathcal P=\partial_t-\Delta+(v-2g)\cdot\nabla,
\qquad
\mathcal Pv=-g,\qquad
\mathcal Ph=-\operatorname{div}v-|g|^2.
\]
Let $H=\eta*h_0$ be a fixed smoothing of $h_0=\log\rho_0$,
where $\eta\ge0$ is a smooth mollifier supported in $B_1$
with unit integral. Set $\ell=\nabla H$ and
$L=\|g_0 \|_{L^{\infty}}$. Then
\[
|H-h_0|\le L,\qquad
\|\ell \|_{L^{\infty}}\le L,\qquad
\|\Delta H \|_{L^{\infty}}\le C_\eta L.
\]
The idea is to combine the equations for $|v|^2/2$ and $h$
in the quantity
\[
W=\frac12|v|^2+h-H.
\]
Completing the squares gives
\begin{equation}\label{eq:isothermal-velocity-strategy}
\begin{aligned}
\mathcal PW
={}&\underbrace{
-\left|\nabla v+\frac12 I_N\right|_F^2
-\left|g+\frac12v-\ell\right|^2}
_{\text{nonpositive terms}}\\
&+\frac14|v|^2-2v\cdot\ell
  +|\ell|^2+\Delta H+\frac N4
\le\frac12|v|^2+A,
\end{aligned}
\end{equation}
where $A=5L^2+\|\Delta H \|_{L^{\infty}}+N/4\le C(1+L^2)$.
Throughout this argument, $C$ depends only on $N$ and $\eta$.
Put
\[
X(t)=\|v(t) \|_{L^{\infty}}^2,\qquad
F(t)=\sup_{0\le s\le t}X(s),\qquad
V_0=\|v_0 \|_{L^{\infty}}.
\]
The function
\[
Z=W-At-\frac12\int_0^tX(s)\,\mathrm ds
\]
satisfies $\mathcal PZ\le0$. Since $W$ and the drift are
bounded on each local existence interval, the comparison
principle yields
\[
\sup_xW(t,x)
\le\frac12V_0^2+L+At+\frac12\int_0^tX(s)\,\mathrm ds.
\]

To recover an upper bound for $v$, we also need to control
the negative part of $h-H$. For this purpose, introduce
\[
a=\frac{\rho}{e^H},\qquad q=a^{-1},\qquad
a_t-\Delta a+\operatorname{div}(ba)=da,
\]
where
\[
b=v-2\ell,\qquad
d=|\ell|^2-\operatorname{div}\ell-v\cdot\ell.
\]
Negative-power tests for $a$, followed by the Moser iteration, give, with $K_0=1+L^2$ and
$\alpha_N=(N+2)/4$,
\begin{equation}\label{eq:isothermal-strategy-reciprocal}
\begin{aligned}
\|q\|_{L^\infty((0,t)\times\mathbb R^N)}
\le{}&Ce^L(1+t)^{1/2}
      \bigl[C(K_0+F(t))\bigr]^{\alpha_N}\\
&\times\exp\left(
CK_0t+C\int_0^tX(s)\,\mathrm ds
\right).
\end{aligned}
\end{equation}
Here the time supremum $F(t)$ occurs only in a polynomial
factor; the exponential retains the time integral of $X$.
Thus, using $|v|^2/2=W+\log q$ and taking logarithms in
\eqref{eq:isothermal-strategy-reciprocal}, we obtain
\[
F(t)\le C(1+V_0^2+L^2)(1+t)
       +2\alpha_N\log(1+F(t))
       +C\int_0^tF(s)\,\mathrm ds.
\]
The inequality
\[
2\alpha_N\log(1+F)\le\frac12F+C_N
\]
absorbs the logarithmic term. Gronwall's inequality then gives
\[
\sup_{0\le s\le t}\|v(s) \|_{L^{\infty}}^2
\le C(1+V_0^2+L^2)e^{Ct},
\qquad N=2,3.
\]
The size of the local bound for $g$ does not enter this
estimate. No finite-mass assumption or uniform positive
lower bound for the density is needed.
\subsubsection{Outline of the estimate for the logarithmic density gradient}
\label{sec:log-gradient-strategy}

Set $g=\nabla\log\rho$. Differentiating the logarithmic form of
the continuity equation gives
\begin{equation}\label{strat:g-equation}
g_t-\Delta g
=\nabla\bigl(|g|^2-v\cdot g-\operatorname{div}v\bigr).
\end{equation}
A direct De Giorgi argument for this equation does not close
with the bounds available at this stage. Indeed, truncating
a component of $g$ produces terms involving $|g|^4$ and
$|\operatorname{div}v|^2$ on its superlevel sets.
Controlling these terms requires additional integrability of
the gradients, which is not supplied by the $L^\infty$ bounds
for $\rho$ and $v$. In particular, the quadratic term $|g|^2$
cannot yet be treated as a controlled source.

We therefore still use a fixed weight determined by the initial
density:
\[
H=\eta*\log\rho_0,\qquad
\omega=e^H,\qquad
\ell=\nabla H,
\]
where $\eta$ is a fixed nonnegative spatial mollifier.
Writing $L=\|\nabla\log\rho_0\|_{L^\infty}$, we have
\[
e^{-L}\rho_0\le\omega\le e^L\rho_0,
\qquad
\|\ell \|_{L^{\infty}}+\|\operatorname{div}\ell \|_{L^{\infty}}
\le C_\eta L.
\]
For $a=\rho/\omega$, the continuity equation becomes
\begin{equation}\label{strat:a-equation}
a_t-\Delta a
+\operatorname{div}\bigl((v-2\ell)a\bigr)
=
\bigl(|\ell|^2-\operatorname{div}\ell-v\cdot\ell\bigr)a.
\end{equation}
The coefficients in this equation are controlled by the
velocity bound and the initial data. Testing with positive
and negative powers of $a$, followed by iteration, yields (When $\gamma=1$, the $L^\infty$ bound for $v$ and the
relative-density bounds are obtained together by closing
the coupled estimates.)
\begin{equation}\label{strat:a-bound}
C_\rho^{-1}\le a(t,x)\le C_\rho,
\qquad 0\le t\le S,
\end{equation}
with $C_\rho$ independent of the local endpoint $S$.

To estimate the density gradient, we use the conservative
variables
\[
m=\rho v,\qquad
Z_k=\frac{\partial_k\rho}{\omega},\qquad
M_{ki}=\frac{\partial_km_i}{\omega}.
\]
Their advantage is seen from
\[
\rho_t-\Delta\rho=-\operatorname{div}m,
\qquad
m_t-\Delta m
=-\operatorname{div}(m\otimes v)-\nabla(\rho^\gamma).
\]
After differentiation and division by $\omega$, the equations
for $\mathcal U=(Z,M)$ can be written as
\begin{equation}\label{strat:normalized-system}
\mathcal U_t-\Delta\mathcal U
=\operatorname{div}\mathbb F+\mathcal H,
\qquad
|\mathbb F|+|\mathcal H|\le C|\mathcal U|.
\end{equation}
Here $C$ depends on the bounds for $\rho$, $v$, and the fixed
weight, but not on the desired bound for $g$.
Thus the differentiated sources have linear growth in the
normalized gradients.

A localized sixth-power energy estimate for
\eqref{strat:normalized-system}, followed by Gronwall's
inequality, gives
\begin{equation}\label{strat:normalized-L6}
\sup_{0\le t\le S}\sup_{y\in\mathbb R^N}
\left(
\|Z(t)\|_{L^6(B_2(y))}
+\|M(t)\|_{L^6(B_2(y))}
\right)\le M_6.
\end{equation}
The initial quantity is controlled by $g_0\in L^\infty$, $v_0=u_0+g_0\in L^\infty$, $\nabla v_0=\nabla u_0+\nabla g_0\in H^1\hookrightarrow L^6$ for $N=2,3,$ and the comparison between
$\omega$ and $\rho_0$.

We then return to the scalar equation for each $Z_k$:
\begin{equation}\label{strat:Z-equation}
\partial_tZ_k-\Delta Z_k
=\operatorname{div}F_k+H_k,
\end{equation}
where
\[
(F_k)_j=2\ell_jZ_k-M_{kj},
\qquad
H_k=(|\ell|^2-\operatorname{div}\ell)Z_k
-\sum_j\ell_jM_{kj}.
\]
By \eqref{strat:normalized-L6}, these sources belong to
$L^6$ on bounded space-time cylinders, uniformly with respect
to their spatial centers. Since $6>N+2$ for $N=2,3$, a
localized De Giorgi iteration yields
\[
\sup_{0\le t\le S}
\left\|\frac{\nabla\rho(t)}{\omega}\right\|_{L^\infty}
\le M_\nabla.
\]
Finally, using \eqref{strat:a-bound}, we recover
\begin{equation}\label{strat:log-gradient-bound}
\sup_{0\le t\le S}\|\nabla\log\rho(t)\|_{L^\infty}
=
\sup_{0\le t\le S}
\left\|
\frac{\nabla\rho(t)/\omega}{\rho(t)/\omega}
\right\|_{L^\infty}
\le C_\rho M_\nabla.
\end{equation}
The lower bound used here is for $\rho/\omega$.
Since $\omega$ decays with $\rho_0$, this argument remains
compatible with far-field vacuum.
\section{Relative density estimates}\label{sec:relative-density}

In this section, we derive two-sided bounds for the density
relative to its initial profile. We introduce a smooth weight
comparable to $\rho_0$ and obtain these bounds through positive-
and negative-power estimates and Moser iteration. The argument
applies to all $\gamma\ge1$. We keep the dependence on the
effective velocity explicit, since it is needed to close the
coupled estimates in the isothermal case.
\subsection{Regularization of the initial density}\label{sec:fixed-weight}

Let \(N=2,3\), and suppose that
\[
\rho_0(x)>0,
\qquad
L:=\|\nabla\log\rho_0\|_{L^\infty(\mathbb R^N)}<\infty.
\]
Write $h_0=\log\rho_0$. Fix \(T<\infty\) and
\(0<S<\min\{T,T_*\}\). On this fixed local interval, let $V$ satisfy
\[
\|v\|_{L^\infty((0,S)\times\mathbb R^N)}\le V.
\]
At this stage, $V$ is a local bound and may depend on $S$. We first derive relative-density estimates that retain the time dependence of the coefficients. These estimates apply to both $\gamma>1$ and $\gamma=1$. In Section \ref{sec:iso-vsec}, the estimate for the reciprocal relative density will be used to close the effective-velocity bound in the isothermal case. Once the effective velocity has been bounded independently of $S$, the resulting two-sided density bounds are also independent of $S$.

We begin with a regularization procedure. Choose a fixed nonnegative
function
\[
\eta\in C_c^\infty(B_1),
\qquad
\int_{\mathbb R^N}\eta(x)\,dx=1,
\]
and define
\begin{equation}\label{eq:en-weight}
H=\eta*\log\rho_0,
\qquad
\omega=e^H,
\qquad
\ell=\nabla H.
\end{equation}
By the mean value theorem, we have
\[
|\log\rho_0(x)-\log\rho_0(y)|\le L|x-y|,
\]
the function \(\log\rho_0\) has at most linear growth, and the convolution
in \eqref{eq:en-weight} is well defined.

The normalization and nonnegativity of \(\eta\) give
\begin{align*}
|H(x)-\log\rho_0(x)|
&=
\left|
\int\eta(y)\bigl(h_0(x-y)-h_0(x)\bigr)\,dy
\right|\\
&\le L\int |y|\eta(y)\,dy
\le L.
\end{align*}
Moreover,
\[
\ell=\eta*\nabla\log\rho_0,
\qquad
\partial_i\ell_j=(\partial_i\eta)*\partial_jh_0.
\]
Consequently,
\begin{equation}\label{eq:en-weight-bounds}
|H-\log\rho_0|\le L,
\qquad
|\ell|\le L,
\qquad
|\operatorname{div}\ell|\le C_\eta L,
\qquad
C_\eta:=\|\nabla\eta\|_{L^1}.
\end{equation}
In particular, no bound on \(\nabla^2h_0\) is required.
Exponentiating the first inequality of \eqref{eq:en-weight-bounds}
yields
\begin{equation}\label{eq:en-weight-comparison}
e^{-L}\rho_0\le\omega\le e^L\rho_0.
\end{equation}
We also have
\begin{equation}\label{eq:en-weight-laplacian}
\frac{\nabla\omega}{\omega}=\ell,
\qquad
\frac{\Delta\omega}{\omega}
=\operatorname{div}\ell+|\ell|^2.
\end{equation}

Define the relative density and its reciprocal by
\begin{equation}\label{eq:en-relative-density}
a=\frac{\rho}{\omega},
\qquad
z=\frac{\omega}{\rho}=\frac1a.
\end{equation}
Since \(\omega\) is independent of time, substituting
\(\rho=\omega a\) into
\[
\rho_t-\Delta\rho+\operatorname{div}(\rho v)=0,
\]
gives
\begin{align*}
a_t-\Delta a
={}&(2\ell-v)\cdot\nabla a\\
&+
\bigl(
\operatorname{div}\ell+|\ell|^2
-\operatorname{div}v-v\cdot\ell
\bigr)a.
\end{align*}
Set
\begin{equation}\label{eq:en-bd}
b=v-2\ell,
\qquad
d=|\ell|^2-\operatorname{div}\ell-v\cdot\ell.
\end{equation}
To retain the time dependence of the coefficients in the relative-density estimates, set
\begin{equation}\label{iso-v:lambda}
\lambda(s)
=1+\|b(s) \|_{L^{\infty}}^2+\|d(s) \|_{L^{\infty}},
\qquad
\Lambda(t)=\sup_{0\le s\le t}\lambda(s),
\end{equation}
where $b,d$ are defined in \eqref{eq:en-bd}.

Then
\begin{equation}\label{eq:en-a-equation}
a_t-\Delta a+\operatorname{div}(ba)=da.
\end{equation}
The coefficients satisfy
\[
|b|\le V+2L,
\qquad
|d|\le L^2+C_\eta L+VL.
\]
We therefore fix
\begin{equation}\label{eq:en-Lambda}
\Lambda_*
=
1+(V+2L)^2+L^2+C_\eta L+VL,
\end{equation}
so that
\begin{equation}\label{eq:en-Lambda1111}
\lambda(t)\le\Lambda(S)\le\Lambda_*, \qquad 0\le t\le S.
\end{equation}
The quantity $\Lambda_*$ depends only on $V$, $L$, and the fixed mollifier $\eta$. Dependence of constants on $\eta$ is suppressed.

The initial values satisfy
\begin{equation}\label{iso-v:initial-comparison}
e^{-L}\le a(0,x)\le e^L,
\qquad
e^{-L}\le z(0,x)\le e^L.
\end{equation}
For later use, write
\begin{equation}\label{eq:en-initial-norms}
M_a:=\|a(0)\|_{L^\infty}\le e^L,
\qquad
M_z:=\|z(0)\|_{L^\infty}\le e^L.
\end{equation}

\subsection{Upper bounds for the relative density
and its reciprocal}
\label{sec:separate-density-bounds}
On each fixed local interval, $\log\rho-\log\rho_0$ is bounded;
together with \eqref{eq:en-weight-comparison}, this ensures that
$a$ and $z$ are bounded before the estimates below are applied.
Spatial cutoffs and standard convergence theorems justify the weighted
tests. The resulting constants do not use these temporary bounds.

We first estimate \(a=\rho/\omega\), which gives an upper bound
for \(\rho/\omega\). We then estimate \(z=\omega/\rho\), which gives
a lower bound for \(\rho/\omega\).

\subsubsection{An upper bound for \(a=\rho/\omega\)}
\label{subsec:a-upper}

\paragraph{Step 1: the positive-power energy estimate.}
Let \(p\ge2\) and let
\(0\le\zeta\in C_c^\infty(\mathbb R^N)\).
Multiplying \eqref{eq:en-a-equation} by
\(\zeta^2a^{p-1}\) and integrating by parts, we obtain
\begin{align}
\frac1p\frac{d}{dt}\int\zeta^2a^pdx
+(p-1)\int\zeta^2a^{p-2}|\nabla a|^2dx
={}&-2\int\zeta a^{p-1}\nabla a\cdot\nabla\zeta dx
\notag\\
&+(p-1)\int\zeta^2a^{p-1}b\cdot\nabla a dx
\notag\\
&+2\int\zeta a^p b\cdot\nabla\zeta dx
+\int\zeta^2da^pdx.
\label{eq:en-a-energy-identity}
\end{align}
In particular, the divergence term is integrated by parts
without estimating \(\operatorname{div}b\).

Young's inequality gives
\begin{align*}
2\left|\int\zeta a^{p-1}\nabla a\cdot\nabla\zeta dx\right|
&\le\frac{p-1}{4}\int\zeta^2a^{p-2}|\nabla a|^2dx
+\frac{C}{p-1}\int|\nabla\zeta|^2a^pdx,\\
(p-1)\left|\int\zeta^2a^{p-1}b\cdot\nabla a dx\right|
&\le\frac{p-1}{4}\int\zeta^2a^{p-2}|\nabla a|^2dx
+C(p-1)\|b\|_{L^\infty}^2\int\zeta^2a^pdx,\\
2\left|\int\zeta a^p b\cdot\nabla\zeta dx\right|
&\le\|b\|_{L^\infty}^2\int\zeta^2a^pdx
+C\int|\nabla\zeta|^2a^pdx,\\
\left|\int\zeta^2da^p dx\right|
&\le\|d\|_{L^\infty}\int\zeta^2a^pdx.
\end{align*}
Since
\[
|\nabla a^{p/2}|^2
=\frac{p^2}{4}a^{p-2}|\nabla a|^2,
\]
we absorb the gradient terms, multiply by \(p\) and use the definition of $\lambda(s)$ to obtain
\begin{equation}\label{eq:en-a-energy}
\frac{d}{dt}\int\zeta^2a^p
+c\int\zeta^2|\nabla a^{p/2}|^2
\le
Cp^2\int
\bigl(\lambda(t)\zeta^2+|\nabla\zeta|^2\bigr)a^p.
\end{equation}
The constants \(c,C>0\) are independent of \(p\) and \(S\).

\paragraph{Step 2: Boundedness of the initial iterate.}
The function \(a\) need not belong to \(L^2(\mathbb R^N)\).
For each \(y\in\mathbb R^N\), introduce
\[
\phi_y(x)=e^{-\sqrt{1+|x-y|^2}}.
\]
Then
\[
\int\phi_y\,dx\le C_N,
\qquad
|\nabla\phi_y|\le\phi_y,
\qquad
\phi_y\ge e^{-\sqrt5}\quad\text{on }B_2(y).
\]
Taking \(p=2\) and \(\zeta=\phi_y^{1/2}\) in
\eqref{eq:en-a-energy}, first with an additional compactly
supported cutoff and then letting its radius tend to infinity,
we find
\[
\frac{d}{dt}\int\phi_y a(t)^2dx
\le C_N\lambda(t)\int\phi_y a(t)^2dx.
\]
Gronwall's inequality and \(a(0)\le M_a\) yield
\begin{equation}\label{eq:en-a-L2}
\sup_{y\in\mathbb R^N}\sup_{0\le t\le S}
\int\phi_y a(t)^2\,dx
\le C_NM_a^2\exp\left(C_N\int^{S}_{0}\lambda(t)dt \right).
\end{equation}

\paragraph{Step 3: the Moser iteration.}
Fix \(y\in\mathbb{R}^{N}\), and write
\[
Q_r^{S}=(0,S)\times B_r(y).
\]
For \(1\le r'<r\le2\), choose a cutoff \(\zeta\) supported in
\(B_r(y)\), equal to one on \(B_{r'}(y)\), and satisfying
\[
0\le\zeta\le1,
\qquad
|\nabla\zeta|\le\frac{C_N}{r-r'}.
\]
Integrating \eqref{eq:en-a-energy} in time, using
\eqref{eq:en-initial-norms} and estimating
\(\nabla(\zeta a^{p/2})\), we obtain
\begin{align}
&\sup_{0\le t\le S}\int(\zeta a^{p/2})^2
+\int_0^S\int|\nabla(\zeta a^{p/2})|^2
\notag\\
&\qquad\le
\frac{C_N\Lambda(S)p^2}{(r-r')^2}
\left(
\iint_{Q_r^{S}}a^p+|B_2|M_a^p
\right).
\label{eq:en-a-local-energy}
\end{align}
The initial term is retained, so the estimate includes the
initial time.

Let
\[
\chi=\frac{N+2}{N}.
\]
The spatial Sobolev inequality, followed by integration in time,
gives
\begin{equation}\label{eq:en-parabolic-sobolev}
\|f\|_{L^{2\chi}(Q_r^{S})}^2
\le C_N
\left(\sup_{0\le t\le S}\|f(t)\|_{L^{2}}^2\right)^{2/(N+2)}
\left(\int_0^S\|\nabla f(t)\|_{L^{2}}^2\,dt\right)^{N/(N+2)}
\end{equation}
for \(f\) with zero spatial boundary trace.
Applying this to \(f=\zeta a^{p/2}\) gives
\begin{equation}\label{eq:en-a-step}
\|a\|_{L^{p\chi}(Q_{r'}^{S})}
\le
\left(\frac{C_N\Lambda(S)p^2}{(r-r')^2}\right)^{1/p}
\left(
\iint_{Q_r^{S}}a^p+|B_2|M_a^p
\right)^{1/p}.
\end{equation}

Set
\[
p_n=2\chi^n,
\qquad
r_n=1+2^{-n},
\qquad
D_n^{(a)}(S)
=
\left(
\iint_{Q_{r_n}^{S}}a^{p_n}+M_a^{p_n}
\right)^{1/p_n}.
\]
Let $r=r_n$, $r'=r_{n+1}$ and we have
\(r_n-r_{n+1}=2^{-(n+1)}\). Absorbing
\(\max\{1,|B_2|\}\) into the dimensional constant and write
\[
\|a\|_{L^{p_{n+1}}(Q_{r_{n+1}}^{S})}
\le (C_N\Lambda(S)p_n^2\,4^{n+1})^{1/p_n}D_n^{(a)}(S).
\]
To include the initial term in \(D_{n+1}^{(a)}\), observe that
\(M_a\le D_n^{(a)}\). Hence
\begin{align*}
\bigl(D_{n+1}^{(a)} (S)\bigr)^{p_{n+1}}
&=
\iint_{Q_{r_{n+1}}^{S}}a^{p_{n+1}}
+M_a^{p_{n+1}}\\
&\le
\bigl((C_N\Lambda(S)p_n^2\,4^{n+1})^\chi+1\bigr)
\bigl(D_n^{(a)}(S)\bigr)^{p_{n+1}}\\
&\le
2(C_N\Lambda(S)p_n^2\,4^{n+1})^\chi
\bigl(D_n^{(a)}(S)\bigr)^{p_{n+1}}.
\end{align*}
Taking the \(p_{n+1}\)-th root, and enlarging \(C_N\), gives
\begin{equation}\label{eq:en-a-recursion}
D_{n+1}^{(a)}(S)
\le
\bigl(C_N\Lambda(S)p_n^2\,4^{n+1}\bigr)^{1/p_n}
D_n^{(a)}(S).
\end{equation}
Indeed, the additional factor
\(2^{1/p_{n+1}}=(2^{1/\chi})^{1/p_n}\)
is absorbed into \((C_N)^{1/p_n}\).

Since
\[
\sum_{n=0}^\infty\frac1{p_n}=\frac{N+2}{4},
\qquad
\sum_{n=0}^\infty
\frac{n+\log p_n}{p_n}<\infty,
\]
iteration yields
\[
\sup_nD_n^{(a)}(S)
\le C_N\Lambda(S)^{(N+2)/4}D_0^{(a)}(S).
\]
By \eqref{eq:en-a-L2},
\[
D_0^{(a)}(S)
\le C_N(1+S)^{1/2}M_a\exp\left(C_N\int^S_0\lambda(t)dt\right).
\]
Moreover, \(Q_1^{S}\subset Q_{r_n}^{S}\), so
\(\|a\|_{L^{p_n}(Q_1^{S})}\le D_n^{(a)}(S)\).
Letting \(n\to\infty\), we conclude that
\[
\|a\|_{L^\infty(Q_1^{S})}
\le
C_N\Lambda(S)^{(N+2)/4}(1+S)^{1/2}
M_a\exp\left(C_N\int^S_0\lambda(t)dt\right).
\]
The constant is independent of \(y\) and \(S\). Define 
\begin{equation}\label{eq:en-Theta}
    \Theta(S):=C_N\Lambda(S)^{(N+2)/4}(1+S)^{1/2} \exp\!\left(C_N\int_0^S\lambda(t)\,dt\right).
\end{equation}
Choose $C_N$ large enough so that $\Theta(S)\ge 1.$ Covering \(\mathbb R^N\) by countably many unit balls gives
\begin{equation}\label{eq:en-a-upper}
\left\|\frac{\rho}{\omega}\right\|_
{L^\infty((0,S)\times\mathbb R^N)}
\le \Theta(S) M_a.
\end{equation}
Thus the density satisfies
\begin{equation}\label{eq:en-density-upper}
\rho(t,x)\le\Theta(S)M_a\,\omega(x) \le \Theta(S)e^L\omega(x), \qquad 0\le t\le S. 
\end{equation}

\subsubsection{An upper bound for \(z=\omega/\rho\)}
\label{subsec:z-upper}

\paragraph{Step 1: the reciprocal equation.}
Expanding \eqref{eq:en-a-equation}, we have
\[
a_t-\Delta a+b\cdot\nabla a
=(d-\operatorname{div}b)a.
\]
Since \(z=a^{-1}\),
\[
z_t=-a^{-2}a_t,
\qquad
\nabla z=-a^{-2}\nabla a,
\qquad
\Delta z=-a^{-2}\Delta a+2a^{-3}|\nabla a|^2.
\]
It follows that
\begin{equation}\label{eq:en-z-equation}
z_t-\Delta z+b\cdot\nabla z
=
(\operatorname{div}b-d)z
-2\frac{|\nabla z|^2}{z}.
\end{equation}

\paragraph{Step 2: the negative-power energy estimate.}
Multiply \eqref{eq:en-z-equation} by
\(\zeta^2z^{p-1}\), where \(p\ge2\).
The last term contributes
\[
-2\int\zeta^2z^{p-2}|\nabla z|^2dx.
\]
Moving it to the left increases the diffusion coefficient
from \(p-1\) to \(p+1\).

The term involving \(\operatorname{div}b\) is treated by
integration by parts:
\[
\int\zeta^2z^p\operatorname{div}b dx
=
-2\int\zeta z^p b\cdot\nabla\zeta dx
-p\int\zeta^2z^{p-1}b\cdot\nabla zdx.
\]
Combining this with the transport term on the left, we obtain
\begin{align}
\frac1p\frac{d}{dt}\int\zeta^2z^pdx
+(p+1)\int\zeta^2z^{p-2}|\nabla z|^2dx
={}&-2\int\zeta z^{p-1}\nabla z\cdot\nabla\zeta dx
\notag\\
&-(p+1)\int\zeta^2z^{p-1}b\cdot\nabla z dx
\notag\\
&-2\int\zeta z^p b\cdot\nabla\zeta dx
-\int\zeta^2dz^pdx.
\label{eq:en-z-energy-identity}
\end{align}
Thus no bound on \(\operatorname{div}b\) is used in the estimate.

The two gradient terms satisfy
\begin{align*}
2\left|\int\zeta z^{p-1}\nabla z\cdot\nabla\zeta dx\right|
&\le\frac{p+1}{4}\int\zeta^2z^{p-2}|\nabla z|^2dx
+\frac{C}{p+1}\int|\nabla\zeta|^2z^pdx,\\
(p+1)\left|\int\zeta^2z^{p-1}b\cdot\nabla z\right|
&\le\frac{p+1}{4}\int\zeta^2z^{p-2}|\nabla z|^2dx
+C(p+1)\|b \|_{L^{\infty}}^2\int\zeta^2z^pdx.
\end{align*}
The remaining terms obey
\begin{align*}
2\left|\int\zeta z^p b\cdot\nabla\zeta dx\right|
&\le\|b \|_{L^{\infty}}^2\int\zeta^2z^p dx
+\int|\nabla\zeta|^2z^p dx,\\
\left|\int\zeta^2dz^p dx\right|
&\le\|d \|_{L^{\infty}}\int\zeta^2z^p dx.
\end{align*}
Using
\[
|\nabla z^{p/2}|^2
=\frac{p^2}{4}z^{p-2}|\nabla z|^2
\]
and the definition of $\lambda(t),$ we obtain
\begin{equation}\label{eq:en-z-energy}
\frac{d}{dt}\int\zeta^2z^pdx
+c\int\zeta^2|\nabla z^{p/2}|^2dx
\le
Cp^2\int
\bigl(\lambda(t)\zeta^2+|\nabla\zeta|^2\bigr)z^pdx.
\end{equation}
The constants $c,C>0$ are independent of $p$ and $S$. In particular, this estimate does not require a bound on $\operatorname{div}v$.

\paragraph{Step 3: the quadratic starting estimate.}
Taking \(p=2\) and using the integrable weight \(\phi_y\)
as above gives
\[
\frac{d}{dt}\int\phi_y z^2dx
\le C_N\lambda(t)\int\phi_y z^2dx.
\]
Since \(z(0)\le M_z\), Gronwall's inequality yields
\begin{equation}\label{eq:en-z-L2}
\sup_{y\in\mathbb R^N}\sup_{0\le t\le S} \int_{\mathbb R^N}\phi_y(x)z(t,x)^2\,dx \le C_NM_z^2 \exp\left(C_N\int_0^S\lambda(t)\,dt\right).
\end{equation}
This estimate does not require \(z(0)\in L^2(\mathbb R^N)\).

\paragraph{Step 4: iteration for the reciprocal.}
 We now repeat the iteration for $z$ on $Q_r^S=(0,S)\times B_r(y)$. Since $\lambda(t)\le\Lambda(S)$ for $0\le t\le S$, integrating
\eqref{eq:en-z-energy} gives
\begin{align}
&\sup_{0\le t\le S}\int(\zeta z^{p/2})^2
+\int_0^S\int|\nabla(\zeta z^{p/2})|^2
\notag\\
&\qquad\le
\frac{C_N\Lambda(S)p^2}{(r-r')^2}
\left(
\iint_{Q_r^{S}}z^p+|B_2|M_z^p
\right).
\label{eq:en-z-local-energy}
\end{align}
Applying \eqref{eq:en-parabolic-sobolev} to
\(f=\zeta z^{p/2}\), we find
\begin{equation}\label{eq:en-z-step}
\|z\|_{L^{p\chi}(Q_{r'}^{S})}
\le
\left(\frac{C_N\Lambda(S)p^2}{(r-r')^2}\right)^{1/p}
\left(
\iint_{Q_r^{S}}z^p+|B_2|M_z^p
\right)^{1/p}.
\end{equation}

Using the exponents $p_n$ and radii $r_n$ from
Subsection~\ref{subsec:a-upper}, define
\[
D_n^{(z)}(S)
=
\left(
\iint_{Q_{r_n}^{S}}z^{p_n}dxdt+M_z^{p_n}
\right)^{1/p_n}.
\]
Let $r=r_n$, $r'=r_{n+1}$ and we have
\(r_n-r_{n+1}=2^{-(n+1)}\).
After absorbing \(|B_2|\) into the dimensional constant,
\eqref{eq:en-z-step} gives
\[
\|z\|_{L^{p_{n+1}}(Q_{r_{n+1}}^{S})}
\le (C_N\Lambda(S)p_n^2\,4^{n+1})^{1/p_n}D_n^{(z)}(S),
\qquad
C_N\Lambda(S)p_n^2\,4^{n+1}\ge1.
\]
Since \(M_z\le D_n^{(z)}(S)\),
\begin{align*}
\bigl(D_{n+1}^{(z)}(S)\bigr)^{p_{n+1}}
&=
\iint_{Q_{r_{n+1}}^{S}}z^{p_{n+1}}
+M_z^{p_{n+1}}\\
&\le
((C_N\Lambda(S)p_n^2\,4^{n+1})^\chi+1)
\bigl(D_n^{(z)}(S)\bigr)^{p_{n+1}}\\
&\le
2(C_N\Lambda(S)p_n^2\,4^{n+1})^\chi
\bigl(D_n^{(z)}(S)\bigr)^{p_{n+1}}.
\end{align*}
Hence, taking the \(p_{n+1}\)-th root, and enlarging \(C_N\),
we arrive at
\[
D_{n+1}^{(z)}(S)
\le
\bigl(C_N\Lambda(S)p_n^2\,4^{n+1}\bigr)^{1/p_n}
D_n^{(z)}(S).
\]
The same convergent product therefore gives
\[
\sup_nD_n^{(z)}(S)
\le C_N\Lambda(S)^{(N+2)/4}D_0^{(z)}(S).
\]
By \eqref{eq:en-z-L2},
\[
D_0^{(z)}
\le C_N(1+S)^{1/2}M_z\exp\left(C_N\int^S_0\lambda(t)dt\right).
\]
Passing to the limit \(p_n\to\infty\) and covering
\(\mathbb R^N\) by countably many unit balls, we conclude that
\begin{equation}\label{eq:en-z-upper}
\left\|\frac{\omega}{\rho}\right\|_
{L^\infty((0,S)\times\mathbb R^N)}
\le\Theta(S) M_z,
\end{equation}
where the constant in \eqref{eq:en-Theta} is chosen large
enough for both estimates.

Since \(z=1/a>0\), it follows that
\begin{equation}\label{eq:en-density-lower}
\rho(t,x)
\ge\frac{\omega(x)}{\Theta(S) M_z}
\ge\Theta(S)^{-1}e^{-L}\omega(x),\qquad 0\le t\le S.
\end{equation}

\subsubsection{The resulting upper and lower bounds for the density}
\label{subsec:density-two-sided}

Combining \eqref{eq:en-density-upper} and
\eqref{eq:en-density-lower}, we obtain the more precise bounds
\begin{equation}\label{eq:en-two-sided-precise}
\frac{1}{\Theta(S) M_z}
\le\frac{\rho(t,x)}{\omega(x)}
\le\Theta(S) M_a,\qquad 0\le t\le S.
\end{equation}
In particular, since \(M_a,M_z\le e^L\),
\begin{equation}\label{eq:en-two-sided-weight}
\Theta(S)^{-1}e^{-L}\omega(x)
\le\rho(t,x)
\le\Theta(S)e^L\omega(x).
\end{equation}
Finally, using
\(e^{-L}\rho_0\le\omega\le e^L\rho_0\), we also obtain
\begin{equation}\label{eq:en-two-sided-initial}
\Theta(S)^{-1}e^{-2L}\rho_0(x)
\le\rho(t,x)
\le\Theta(S)e^{2L}\rho_0(x),
\qquad 0\le t\le S.
\end{equation}
  These estimates hold for every $0<S<\min\{T,T_{*}\}$, with $\Theta(S)$ defined above. The lower bound is relative to the initial density and does not assert a positive constant lower bound on $\mathbb R^N$. It is therefore compatible with $\rho_0(x)\to0$ as $|x|\to\infty$. A bound for $\Theta(S)$ independent of $S$ will follow from the effective-velocity estimates in Section \ref{effvelocsec} and Section \ref{sec:iso-vsec}.
\section{An upper bound for the effective velocity when $\gamma> 1$}\label{effvelocsec}
\subsection{Reformulation of the equations}

To further reformulate the system, set
\begin{equation}
    c=\sqrt{\gamma}\,\rho^{\frac{\gamma-1}{2}}.
\end{equation}
The system \eqref{eq:effective-system-1} can then be written as
\begin{equation}
    \begin{cases}
        \partial_t\rho-\Delta\rho
        +\bigl(v-2\nabla\log\rho\bigr)\cdot\nabla\rho
        =
        -\rho\operatorname{div}v
        -2\rho|\nabla\log\rho|^2,\\[2mm]
        \partial_t v-\Delta v
        +\bigl(v-2\nabla\log\rho\bigr)\cdot\nabla v
        =
        -c^2\nabla\log\rho.
    \end{cases}
\end{equation}
Expressing the density equation in terms of $c$ yields
\begin{equation}
    \begin{cases}
        \displaystyle
        \partial_t c-\Delta c
        +\bigl(v-2\nabla\log\rho\bigr)\cdot\nabla c
        =
        -\frac{\gamma-1}{2}c\,\operatorname{div}v
        -\frac{\gamma^2-1}{4}c|\nabla\log\rho|^2,
        \\[3mm]
        \displaystyle
        \partial_t v-\Delta v
        +\bigl(v-2\nabla\log\rho\bigr)\cdot\nabla v
        =
        -c^2\nabla\log\rho,
    \end{cases}
    \qquad
    c=\sqrt{\gamma}\,\rho^{\frac{\gamma-1}{2}}.
\end{equation}

Fix $\varepsilon>0$ and define
\[
\mathcal P
=
\partial_t-\Delta
+\bigl(v-2\nabla\log\rho\bigr)\cdot\nabla,
\qquad
s=\sqrt{|v|^2+\varepsilon^2},
\qquad
e=\frac{v}{s},
\qquad
W_\varepsilon=s+\frac{2c}{\gamma-1}.
\]
The preceding system gives
\begin{equation}\label{2cgamma-1}
    \mathcal Pv=-c^2\nabla\log\rho,
    \qquad
    \mathcal P\left(\frac{2c}{\gamma-1}\right)
    =
    -c\,\operatorname{div}v
    -\frac{\gamma+1}{2}c|\nabla\log\rho|^2.
\end{equation}

Let
\[
G=(\partial_jv_i)_{i,j},
\qquad
B=I_N-e\otimes e.
\]
Direct differentiation gives
\[
\partial_js=e\cdot\partial_jv,
\qquad
\Delta s
=
e\cdot\Delta v
+\frac1s\sum_{j=1}^{N}
\left(
    |\partial_jv|^2-(e\cdot\partial_jv)^2
\right).
\]
Consequently,
\begin{equation}\label{Dsequ}
    \mathcal Ps
    =
    -c^2e\cdot\nabla\log\rho
    -\frac1s\operatorname{tr}(G^TBG).
\end{equation}

Adding \eqref{Dsequ} to the equation for
$2c/(\gamma-1)$ in \eqref{2cgamma-1}, we obtain
\[
\mathcal PW_\varepsilon
=
-c^2e\cdot\nabla\log\rho
-\frac1s\operatorname{tr}(G^TBG)
-c\,\operatorname{div}v
-\frac{\gamma+1}{2}c|\nabla\log\rho|^2.
\]
On the other hand, since
$c=\sqrt{\gamma}\rho^{(\gamma-1)/2}$,
\[
\nabla\left(\frac{2c}{\gamma-1}\right)
=
c\nabla\log\rho.
\]
Thus
\[
ce\cdot\nabla W_\varepsilon
=
ce\cdot\nabla s+c^2e\cdot\nabla\log\rho.
\]
Adding this identity to the preceding equation cancels
the terms involving $c^2e\cdot\nabla\log\rho$.
Moreover,
\[
\operatorname{div}v-e\cdot\nabla s
=
\operatorname{tr}G-e^TGe
=
\operatorname{tr}(BG).
\]
It follows that
\[
\begin{aligned}
    &\left[
        \partial_t-\Delta
        +(v-2\nabla\log\rho+ce)\cdot\nabla
    \right]W_\varepsilon\\
    &\qquad
    =
    -\frac1s\operatorname{tr}(G^TBG)
    -c\,\operatorname{tr}(BG)
    -\frac{\gamma+1}{2}c|\nabla\log\rho|^2.
\end{aligned}
\]

Since $B$ is symmetric and positive semidefinite,
completing the square gives
\[
\left|
    B^{1/2}\left(G+\frac{cs}{2}I_N\right)
\right|_F^2
=
\operatorname{tr}(G^TBG)
+cs\,\operatorname{tr}(BG)
+\frac{c^2s^2}{4}\operatorname{tr}B,
\]
where $|\cdot|_F$ denotes the Frobenius norm,
\[
|A|_F=\sqrt{\operatorname{tr}(A^TA)}.
\]
Substituting this identity into the preceding equation yields
\begin{equation}\label{LeWeN4c}
    \begin{aligned}
        &\left[
            \partial_t-\Delta
            +(v-2\nabla\log\rho+ce)\cdot\nabla
        \right]W_\varepsilon\\
        &=
        -\frac1s
        \left|
            B^{1/2}\left(G+\frac{cs}{2}I_N\right)
        \right|_F^2
        +\frac{c^2s}{4}\operatorname{tr}B
        -\frac{\gamma+1}{2}c|\nabla\log\rho|^2\\
        &\le
        \frac{c^2s}{4}\operatorname{tr}B
        \le
        \frac N4c^2s
        \le
        \frac N4c^2W_\varepsilon.
    \end{aligned}
\end{equation}
Here we have used the nonnegativity of the squared norm,
$c>0$,
\[
\operatorname{tr}B=N-|e|^2\le N,
\]
and $s\le W_\varepsilon$, which follows from $\gamma>1$.

\subsection{Bounding the effective velocity from above via the comparison lemma}

\noindent
\begin{proposition}[An effective velocity bound independent of a uniform
positive lower bound for the density]\label{prop: v L infty}
    Let $(\rho,v)$ be a local strong solution on
$[0,S]\times\mathbb R^N$, continuous up to $t=0$,
with $\rho>0$. Assume that
\[
\rho,v\in C([0,S];L^\infty(\mathbb R^N)),
\qquad
\sup_{0<t\le S}
\|\nabla\log\rho(t) \|_{L^{\infty}}<\infty.
\]
Then, for every $0\le t\le S$,
\begin{equation}\label{est: v L infty}
    \left\|
        |v(t)|
        +\frac{2\sqrt{\gamma}}{\gamma-1}
        \rho(t)^{(\gamma-1)/2}
    \right \|_{L^{\infty}}
    \le
    W_0\exp\left(
        \frac{N\gamma}{4}
        \int_0^t
        \|\rho(s) \|_{L^{\infty}}^{\gamma-1}\,\mathrm ds
    \right),
\end{equation}
where
\[
W_0
=
\left\|
    |v_0|
    +\frac{2\sqrt{\gamma}}{\gamma-1}
    \rho_0^{(\gamma-1)/2}
\right \|_{L^{\infty}}.
\]
\end{proposition}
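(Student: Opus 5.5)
The plan is to apply the comparison principle, Lemma~\ref{lem:max}, to the renormalized quantity
\[
Z_\varepsilon(t,x)=\exp\Bigl(-\int_0^t k(\tau)\,\mathrm d\tau\Bigr)W_\varepsilon(t,x),
\qquad
k(\tau)=\frac{N\gamma}{4}\|\rho(\tau)\|_{L^\infty}^{\gamma-1},
\]
where $W_\varepsilon=s+2c/(\gamma-1)$ with $s=\sqrt{|v|^2+\varepsilon^2}$. Since $c^2=\gamma\rho^{\gamma-1}\le\gamma\|\rho(t)\|_{L^\infty}^{\gamma-1}$, inequality \eqref{LeWeN4c} gives $\mathcal L W_\varepsilon\le k(t)W_\varepsilon$, with drift
\[
b=v-2\nabla\log\rho+ce .
\]
Note that $W_\varepsilon>0$. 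The product rule, with $\partial_t\exp(-\int_0^tk)=-k\exp(-\int_0^tk)$, then gives $\mathcal LZ_\varepsilon\le0$ almost everywhere. The function $k$ is continuous, or at least bounded, because $\rho\in C([0,S];L^\infty)$, so the integrating factor is harmless.

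The next step is to check the hypotheses of Lemma~\ref{lem:max}.

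\textbf{Boundedness of the drift.} We have $|e|\le1$ and $c\le\sqrt\gamma\,\|\rho\|_{L^\infty}^{(\gamma-1)/2}$. The hypotheses give $v\in L^\infty$ and $\sup_t\|\nabla\log\rho\|_{L^\infty}<\infty$. Hence $\|b\|_{L^\infty}\le M<\infty$ on $(0,S)\times\mathbb R^N$.

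\textbf{Boundedness and continuity of $Z_\varepsilon$.} These follow from $\rho,v\in C([0,S];L^\infty)$, continuity up to $t=0$, and the fact that $s$ and $c$ are Lipschitz or H\"older functions of $(v,\rho)$. For $\gamma\ge 3$ the map $\rho\mapsto c$ is smooth. For $1<\gamma<3$ it is only H\"older at $\rho=0$, but $\rho>0$ pointwise and we only need local regularity.

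\textbf{Local Sobolev regularity.} We need $Z_\varepsilon\in H^1(0,S;L^2(B_R))\cap L^2(0,S;H^2(B_R))$. I would derive it from the local strong-solution class of Lemma~\ref{lem: loc sol for gamma>1}, namely $v-v_0\in C([0,S];H^2)$, $\nabla v\in L^2(H^2)$, $v_t\in L^2$, and $\rho\in L^2(H^4)$, $\rho_t\in L^2(H^2)$. On $B_R$ the density is bounded below by a positive constant, by continuity and positivity. Therefore $c=\sqrt\gamma\,\rho^{(\gamma-1)/2}$ inherits the $H^2$ and $H^1_t$ regularity of $\rho$ there. Since $s$ is a smooth function of $v$ (as $\varepsilon>0$), it inherits the regularity of $v$ on $B_R$.

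I also need the pointwise computation leading to \eqref{LeWeN4c} to hold almost everywhere. I would justify this by the chain rule for $W^{2,1}$ functions composed with smooth maps, since $s$ and $c$ are smooth in $(v,\rho)$ away from $\rho=0$. I expect this regularity bookkeeping to be the main (though routine) obstacle. The delicate point is that Lemma~\ref{lem:max} needs second derivatives of $Z_\varepsilon$ in $L^2$ locally, which is available only through the local strong-solution class. The sign structure itself is already in \eqref{LeWeN4c}.

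With the hypotheses verified, Lemma~\ref{lem:max} yields
\[
Z_\varepsilon(t,x)\le\sup_y Z_\varepsilon(0,y)=\Bigl\|\sqrt{|v_0|^2+\varepsilon^2}+\frac{2c_0}{\gamma-1}\Bigr\|_{L^\infty}\le W_0+\varepsilon .
\]
The bound is independent of $M$. Multiplying back by the exponential gives
\[
|v(t,x)|+\frac{2\sqrt\gamma}{\gamma-1}\rho^{(\gamma-1)/2}\le W_\varepsilon(t,x)\le (W_0+\varepsilon)\exp\Bigl(\frac{N\gamma}{4}\int_0^t\|\rho\|_{L^\infty}^{\gamma-1}\Bigr).
\]
Letting $\varepsilon\downarrow0$ and taking the supremum in $x$ gives \eqref{est: v L infty} for every $t\in[0,S]$.
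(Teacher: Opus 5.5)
Your proposal is correct and follows the paper's proof essentially step for step. You use the same integrating factor $e^{-\int_0^t k}$ applied to $W_\varepsilon$ through \eqref{LeWeN4c}, the same check that the drift $v-2\nabla\log\rho+ce$ is bounded uniformly in $\varepsilon$ and that $Z_\varepsilon$ is bounded with the local regularity required by Lemma~\ref{lem:max}, and the same limit $\varepsilon\downarrow0$ via $\|W_\varepsilon(0)\|_{L^\infty}\le W_0+\varepsilon$. Your regularity bookkeeping, namely the local positive lower bound for $\rho$ on compact sets and the chain rule for $s$ and $c$, is slightly more explicit than the paper's.
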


\begin{proof}
    Define
    \begin{equation}\label{eq:k}
        k(t)
        =
        \frac{N\gamma}{4}
        \|\rho(t) \|_{L^{\infty}}^{\gamma-1},
        \qquad
        K(t)=\int_0^t k(\tau)\,\mathrm d\tau.
    \end{equation}
    Since
    $\rho\in C([0,S];L^\infty(\mathbb R^N))$,
    the map $t\mapsto\|\rho(t) \|_{L^{\infty}}$ is continuous.
    Consequently, $K\in C^1([0,S])$.

    Set
    \[
    \mathcal L_\varepsilon
    =
    \partial_t-\Delta
    +(v-2\nabla\log\rho+ce)\cdot\nabla.
    \]
    Since $W_\varepsilon>0$ and $\gamma>1$,
    \eqref{LeWeN4c} gives
    \begin{equation}\label{eq:Lreact}
        \mathcal L_\varepsilon W_\varepsilon
        \le k(t)W_\varepsilon.
    \end{equation}
    Let
    \[
    Z_\varepsilon(t,x)
    =
    e^{-K(t)}W_\varepsilon(t,x).
    \]
    As the integrating factor is independent of $x$,
    \begin{align}\label{eq:factor}
        \mathcal L_\varepsilon Z_\varepsilon
        &=
        e^{-K(t)}
        \mathcal L_\varepsilon W_\varepsilon
        -k(t)e^{-K(t)}W_\varepsilon
        \notag\\
        &\le0.
    \end{align}
    No additional spatial derivative terms arise.

    \textbf{Step 1: Verification of the hypotheses
    of the comparison principle.}

    Fix $S$ and $0<\varepsilon\le1$.
    The function
    \[
    s=\sqrt{|v|^2+\varepsilon^2}
    \]
    remains smooth at points where $v=0$.
    Moreover, since $\rho>0$, the function
    \[
    \frac{\sqrt{\gamma}}{\theta}\rho^\theta,
    \qquad
    \theta=\frac{\gamma-1}{2},
    \]
    has the required differential regularity
    at every finite spatial point.
    Thus $Z_\varepsilon$ satisfies
    the regularity assumptions of Lemma~\ref{lem:max}.

    Write
    \[
    R_S=\sup_{0\le t\le S}\|\rho(t) \|_{L^{\infty}},
    \qquad
    V_S=\sup_{0\le t\le S}\|v(t) \|_{L^{\infty}},
    \qquad
    L_S=\sup_{0<t\le S}
    \|\nabla\log\rho(t) \|_{L^{\infty}}.
    \]
    These quantities are finite by the hypotheses
    on the local solution.
    Since
    \[
    0<W_\varepsilon
    \le
    V_S+1+\frac{\sqrt{\gamma}}{\theta}R_S^\theta
    \]
    and $K(t)\ge0$, the function $Z_\varepsilon$
    is bounded on $[0,S]\times\mathbb R^N$.

    The drift is
    \[
    b_\varepsilon
    =
    v-2\nabla\log\rho
    +c\frac{v}{\sqrt{|v|^2+\varepsilon^2}}.
    \]
    Using
    \[
    \left|
        \frac{v}{\sqrt{|v|^2+\varepsilon^2}}
    \right|
    \le1,
    \]
    we obtain
    \begin{equation}\label{eq:driftbound}
        \|b_\varepsilon\|_{L^\infty_{t,x}}
        \le
        V_S+2L_S+\sqrt{\gamma}R_S^\theta
        =:M_S<\infty.
    \end{equation}
    This bound is uniform for $0<\varepsilon\le1$.
    The comparison principle requires no estimates
    on $\nabla b_\varepsilon$ or
    $\operatorname{div}b_\varepsilon$.

    Applying Lemma~\ref{lem:max} to \eqref{eq:factor},
    we find
    \[
    Z_\varepsilon(t,x)
    \le
    \sup_{y\in\mathbb R^N}Z_\varepsilon(0,y)
    =
    \|W_\varepsilon(0) \|_{L^{\infty}}.
    \]
    Therefore,
    \begin{equation}\label{eq:epsbound}
        \|W_\varepsilon(t) \|_{L^{\infty}}
        \le
        e^{K(t)}\|W_\varepsilon(0) \|_{L^{\infty}}.
    \end{equation}
    The drift bound $M_S$ used to apply
    the comparison principle does not appear
    on the right-hand side of this estimate.

    \textbf{Step 2: Passage to the limit,
    including at zeros of the velocity.}

    Define
    \[
    W(t,x)
    =
    |v(t,x)|+\frac{2c(t,x)}{\gamma-1},
    \qquad
    W_\varepsilon
    =
    s+\frac{2c}{\gamma-1}.
    \]
    Since
    \[
    0
    \le
    \sqrt{|v|^2+\varepsilon^2}-|v|
    =
    \frac{\varepsilon^2}
    {\sqrt{|v|^2+\varepsilon^2}+|v|}
    \le\varepsilon,
    \]
    we have, uniformly in time and space,
    \begin{equation}\label{eq:uniformeps}
        0\le W_\varepsilon-W\le\varepsilon.
    \end{equation}
    In particular, recalling that
    \[
    W_0
    =
    \left\|
        |v_0|
        +\frac{2\sqrt{\gamma}}{\gamma-1}
        \rho_0^{(\gamma-1)/2}
    \right \|_{L^{\infty}},
    \]
    we obtain
    \begin{equation}\label{eq:init-eps}
        \|W_\varepsilon(0) \|_{L^{\infty}}
        \le
        \|W(0) \|_{L^{\infty}}+\varepsilon
        =
        W_0+\varepsilon.
    \end{equation}
    Combining this with \eqref{eq:epsbound} gives
    \[
    \|W(t) \|_{L^{\infty}}
    \le
    \|W_\varepsilon(t) \|_{L^{\infty}}
    \le
    e^{K(t)}(W_0+\varepsilon).
    \]
    Letting $\varepsilon\downarrow0$, we conclude that
    \begin{equation}\label{eq:final}
            \begin{aligned}
                &\left\|
                    |v(t)|
                    +\frac{2\sqrt{\gamma}}{\gamma-1}
                    \rho(t)^{(\gamma-1)/2}
                \right \|_{L^{\infty}}   \le
                W_0\exp\left[
                    \frac{N\gamma}{4}
                    \int_0^t
                    \|\rho(\tau) \|_{L^{\infty}}^{\gamma-1}
                    \,\mathrm d\tau
                \right].
            \end{aligned}
    \end{equation}
    This proves Proposition \ref{prop: v L infty}, namely estimate \eqref{est: v L infty}.
    Since $K(0)=0$, the estimate also includes
    the initial time.

    \paragraph{Order of the limits.}
    We first fix the existing solution interval $[0,S]$
    and $\varepsilon>0$.
    In the comparison argument, we let the spatial
    barrier parameter $\delta\downarrow0$
    to obtain \eqref{eq:epsbound}.
    We then let $\varepsilon\downarrow0$.
    This procedure avoids taking second derivatives
    of $|v|$ at its zeros and does not require
    passing to the limit directly in the regularized
    differential inequality.
\end{proof}
\section{An upper bound for the effective velocity when $\gamma=1$}
\label{sec:iso-vsec}
In this section we apply the relative-density estimates from
Section~\ref{sec:relative-density} to estimate the effective
velocity in the isothermal case.
\subsection{Reformulation of the equations}

When $\gamma=1$, the effective system becomes
\begin{equation}\label{iso-v:system}
\begin{cases}
\rho_t-\Delta\rho
=-\operatorname{div}(\rho v),\\[1mm]
v_t-\Delta v
+(v-2g)\cdot\nabla v=-g,
\end{cases}
\qquad
g=\nabla\log\rho.
\end{equation}
Set $h=\log\rho$ and retain $h_0$, $L$, $H$, $\omega$, and $\ell$
from Subsection~\ref{sec:fixed-weight}. By
\eqref{eq:en-weight-bounds},
\[
|H-h_0|\le L,\qquad
\|\ell \|_{L^{\infty}}\le L,\qquad
\|\Delta H \|_{L^{\infty}}\le C_\eta L.
\]

Define
\[
\mathcal P
=\partial_t-\Delta+(v-2g)\cdot\nabla.
\]
The density equation in \eqref{iso-v:system} gives
\[
h_t-\Delta h
=|g|^2-v\cdot g-\operatorname{div}v.
\]
Consequently,
\begin{equation}\label{iso-v:PhPv}
\mathcal Pv=-g,\qquad
\mathcal Ph=-\operatorname{div}v-|g|^2.
\end{equation}

Introduce the comparison quantity
\begin{equation}\label{iso-v:comparison}
W=\frac12|v|^2+h-H
=\frac12|v|^2+\log\frac{\rho}{\omega}.
\end{equation}
Direct differentiation yields
\[
\mathcal P\left(\frac12|v|^2\right)
=-|\nabla v|_F^2-v\cdot g,
\]
and
\[
\mathcal PH
=-\Delta H+(v-2g)\cdot\ell.
\]
Combining these identities with \eqref{iso-v:PhPv}, we obtain
\[
\begin{aligned}
\mathcal PW
={}&-|\nabla v|_F^2-\operatorname{div}v
    -|g|^2-v\cdot g\\
&+\Delta H-v\cdot\ell+2g\cdot\ell.
\end{aligned}
\]
Completing the squares gives
\begin{equation}\label{iso-v:square-identity}
\begin{aligned}
\mathcal PW
={}&
-\left|\nabla v+\frac12I_N\right|_F^2
-\left|g+\frac12v-\ell\right|^2\\
&+\frac14|v|^2-2v\cdot\ell
  +|\ell|^2+\Delta H+\frac N4.
\end{aligned}
\end{equation}
Since
\[
2|v||\ell|
\le\frac14|v|^2+4L^2,
\]
it follows from \eqref{eq:en-weight-bounds} that
\begin{equation}\label{iso-v:comparison-inequality}
\mathcal PW\le\frac12|v|^2+A,
\end{equation}
where
\begin{equation}\label{iso-v:A}
A:=5L^2+\|\Delta H \|_{L^{\infty}}+\frac N4
\le C(1+L^2).
\end{equation}

We retain the relative density $a$ and its reciprocal $z$
from \eqref{eq:en-relative-density}. Their equations and the
initial comparisons have already been established in
\eqref{eq:en-a-equation}, \eqref{eq:en-z-equation}, and
\eqref{iso-v:initial-comparison}, respectively.

\subsection{Bounding the effective velocity from above via the comparison lemma}

\begin{proposition}[An upper bound for the effective velocity]
\label{iso-v:proposition}
Let $N=2,3$ and $\gamma=1$, and let $(\rho,v)$ be a positive
local strong solution of \eqref{iso-v:system} on
$[0,T_*)\times\mathbb R^N$, continuous up to $t=0$.
Assume that
\[
V_0:=\|v_0 \|_{L^{\infty}}<\infty,\qquad
L<\infty,
\]
and that
\[
v,\quad g,\quad h-h_0
\in L^\infty((0,S)\times\mathbb R^N)
\qquad\text{for every }S<T_*.
\]
Then, for every finite $T>0$ and
$0<S<\min\{T,T_*\}$,
\begin{equation}\label{iso-v:main-estimate}
\sup_{0\le t\le S}\|v(t) \|_{L^{\infty}}^2
\le C(1+V_0^2+L^2)e^{CT},
\end{equation}
where $C>0$ depends only on $N$ and the fixed mollifier $\eta$.
In particular, the constant is independent of $S$ and of the
local bounds for $v$, $g$ and $h-h_0$.
\end{proposition}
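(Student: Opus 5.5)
The plan is to couple the comparison inequality \eqref{iso-v:comparison-inequality} for $W=\frac12|v|^2+\log a$ with the reciprocal relative-density bound \eqref{eq:en-z-upper}, which keeps the time dependence of the coefficients, and then close with a Gronwall argument in $X(t)=\|v(t)\|_{L^\infty}^2$. Fix $0<S<\min\{T,T_*\}$ and set $F(t)=\sup_{0\le s\le t}X(s)$.

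\textbf{Step 1: upper bound for $W$.} Define
\[
Z(t,x)=W(t,x)-At-\frac12\int_0^tX(s)\,ds .
\]
Since $\frac12|v|^2\le \frac12X(t)$ pointwise, \eqref{iso-v:comparison-inequality} gives $\mathcal PZ\le0$. The hypotheses $v,g,h-h_0\in L^\infty((0,S)\times\mathbb R^N)$ make $Z$ bounded and the drift $v-2g$ bounded; together with the local regularity, this is what Lemma~\ref{lem:max} requires. Its conclusion does not involve the drift bound. Since $|h_0-H|\le L$, we get $\sup_x W(0,x)\le \frac12V_0^2+L$, hence
\[
\sup_xW(t,x)\le \tfrac12V_0^2+L+At+\tfrac12\int_0^tX(s)\,ds .
\]

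\textbf{Step 2: lower bound for $\log a$.} Here $\lambda(s)\le 1+(\|v(s)\|_{L^\infty}+2L)^2+L^2+C_\eta L+L\|v(s)\|_{L^\infty}\le C(K_0+X(s))$ with $K_0=1+L^2$. Hence $\Lambda(t)\le C(K_0+F(t))$ and $\int_0^t\lambda\le CK_0t+C\int_0^tX$. Inserting this into $\Theta(t)$ from \eqref{eq:en-Theta} and using \eqref{eq:en-z-upper} with $M_z\le e^L$ gives
\[
\log z\le L+C+\alpha_N\log\bigl(C(K_0+F(t))\bigr)+\tfrac12\log(1+t)+CK_0t+C\int_0^tX .
\]
This estimate must be applied on $[0,t]$ for each $t\le S$, which is legitimate because the Moser argument of Section~\ref{sec:relative-density} works on any subinterval. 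The bound is monotone in $t$.

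\textbf{Step 3: closing.} Write $\frac12|v|^2=W+\log z$, add the two bounds, take the supremum in $x$ and then over $s\le t$, and use $\log(1+t)\le t$. This yields
\[
F(t)\le C(1+V_0^2+L^2)(1+t)+2\alpha_N\log(1+F(t))+C\int_0^tF(s)\,ds .
\]
The logarithmic term is absorbed using $2\alpha_N\log(1+F)\le\frac12F+C_N$, which is valid because $F(t)<\infty$ on $[0,S]$ by hypothesis. Gronwall's inequality then gives $F(S)\le C(1+V_0^2+L^2)(1+T)e^{CT}$, and the factor $1+T$ is absorbed into $e^{CT}$ after enlarging $C$.

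\textbf{Main obstacle.} The delicate point is Step 2: $X$ must enter the exponent of the reciprocal bound only through its time integral, with the supremum $F$ appearing only polynomially. Otherwise the inequality becomes superlinear and Gronwall fails. This is precisely why \eqref{eq:en-Theta} separates $\Lambda(S)^{(N+2)/4}$ from $\exp(C\int\lambda)$. One must also check that the starting estimate \eqref{eq:en-z-L2} and the factor $(1+S)^{1/2}$ contribute only these harmless terms. Finally, the local bounds are used only qualitatively, to justify Lemma~\ref{lem:max} and the finiteness of $F$, and never quantitatively.
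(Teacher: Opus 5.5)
Your proposal is correct and follows the paper's proof essentially step by step. It applies the comparison principle to $W-At-\frac12\int_0^tX$, then the reciprocal bound \eqref{eq:en-z-upper} on each $[0,t]$, so that $F(t)$ enters only polynomially through $\Lambda(t)$ and $X$ enters the exponent only through $\int_0^t\lambda$, and finishes with absorption of $2\alpha_N\log(1+F)$ and Gronwall; your remark that this absorption uses the qualitative finiteness of $F$ on $[0,S]$ makes explicit a point the paper leaves implicit.
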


\begin{proof}
Fix $T<\infty$ and $0<S<\min\{T,T_*\}$.
For $0\le t\le S$, write
\[
X(t)=\|v(t) \|_{L^{\infty}}^2,\qquad
F(t)=\sup_{0\le s\le t}X(s),\qquad
K_0=1+L^2.
\]
Throughout the proof, $C$ denotes a generic positive constant depending only on $N$ and $\eta.$ The calculations below are justified on each fixed local interval
by spatial truncation and approximation.

\medskip
\noindent
\textbf{Step 1: Application of the comparison principle.}

Define
\[
Z(t,x)
=W(t,x)-At-\frac12\int_0^tX(s)\,ds.
\]
By \eqref{iso-v:comparison-inequality},
\[
\mathcal PZ
\le\frac12|v(t,x)|^2-\frac12X(t)\le0.
\]
On the fixed interval $[0,S]$, the drift $v-2g$ is bounded.
Furthermore,
\[
W=\frac12|v|^2+(h-h_0)+(h_0-H)
\]
is bounded by the local assumptions and
\eqref{eq:en-weight-bounds}.
Thus the comparison principle in Lemma~\ref{lem:max} gives
\[
Z(t,x)\le\sup_{y\in\mathbb R^N}W(0,y).
\]
Since
\[
\sup_xW(0,x)\le\frac12V_0^2+L,
\]
we obtain
\begin{equation}\label{iso-v:comparison-bound}
\sup_xW(t,x)
\le\frac12V_0^2+L+At
+\frac12\int_0^tX(s)\,ds.
\end{equation}
As in Lemma~\ref{lem:max}, the size of the local drift does not
enter the right-hand side.

\medskip
\noindent
\textbf{Step 2: An upper bound for the reciprocal relative density.}

Recall $\lambda$ and $\Lambda(t)$ from \eqref{iso-v:lambda}.
From \eqref{eq:en-weight-bounds} and
\eqref{eq:en-bd}, we have
\begin{equation}\label{iso-v:coefficient-bounds}
\lambda(s)\le C(K_0+X(s)),
\qquad
\Lambda(t)\le C(K_0+F(t)).
\end{equation}
For each $0<t\le S$, apply the estimate for $z$ from Subsection~\ref{subsec:z-upper} on the interval $[0,t]$. This gives 
\begin{equation}\label{iso-v:z-infinity}
   \|z\|_{L^\infty((0,t)\times\mathbb R^N)} \le M_z\Theta(t)=CM_z(1+t)^{1/2}\Lambda(t)^{(N+2)/4}
\exp\left(C\int_0^t\lambda(s)\,ds\right). 
\end{equation}
Let
\[
\alpha_N=\frac{N+2}{4}.
\]
Taking logarithms in \eqref{iso-v:z-infinity} and using
$M_z\le e^L$ from \eqref{eq:en-initial-norms} as well as \eqref{iso-v:coefficient-bounds}, we obtain
\begin{equation}\label{iso-v:log-density-bound}
\begin{aligned}
\log\|z\|_{L^\infty((0,t)\times\mathbb R^N)}
&\le
L+C+\frac12\log(1+t)\\
&+\alpha_N\log\bigl(C(K_0+F(t))\bigr)
+CK_0t+C\int_0^tX(s)\,ds.
\end{aligned}
\end{equation}

\medskip
\noindent
\textbf{Step 3: Closing the effective velocity estimate.}

By \eqref{iso-v:comparison},
\[
\frac12|v|^2=W+\log z.
\]
Combining \eqref{iso-v:comparison-bound} with
\eqref{iso-v:log-density-bound}, and taking the supremum in
time and space, yields
\begin{equation}\label{iso-v:combined-bound}
\begin{aligned}
\frac12F(t)
\le{}&
\frac12V_0^2+2L+C+\frac12\log(1+t)+CK_0t\\
&+\alpha_N\log\bigl(C(K_0+F(t))\bigr)
+C\int_0^tF(s)\,ds.
\end{aligned}
\end{equation}
Since $K_0\ge1$,
\[
\log(K_0+F(t))
\le\log K_0+\log(1+F(t)).
\]
Using
\[
L\le K_0,\qquad
\log K_0\le K_0,\qquad
\log(1+t)\le t,
\]
we infer from \eqref{iso-v:combined-bound} that
\begin{equation}\label{iso-v:logarithmic-closure}
F(t)
\le C(1+V_0^2+L^2)(1+t)
+2\alpha_N\log(1+F(t))
+C\int_0^tF(s)\,ds.
\end{equation}
For every $x\ge0$,
\[
2\alpha_N\log(1+x)\le\frac12x+C_N.
\]
Absorbing the logarithmic term in
\eqref{iso-v:logarithmic-closure}, we obtain
\[
F(t)
\le C(1+V_0^2+L^2)(1+t)
+C\int_0^tF(s)\,ds.
\]
Gronwall's inequality then yields
\[
F(t)\le C(1+V_0^2+L^2)e^{Ct},
\qquad 0\le t\le S.
\]
Since $S<T$, this proves \eqref{iso-v:main-estimate}.

The argument applies in both dimensions, with
\[
\alpha_N=
\begin{cases}
1, & N=2,\\[1mm]
5/4, & N=3.
\end{cases}
\]
The initial time is included by continuity.
No uniform positive lower bound for $\rho$ is required.
\end{proof}

\section{An upper bound for $\|\nabla\log\rho\|_{L^\infty}$}
\label{sec:log-density-gradient}

\subsection{Previously established relative-density bounds}\label{sec: relative-density bounds independent of S}

We retain the fixed weight, the relative density, and its reciprocal
from \eqref{eq:en-weight} and \eqref{eq:en-relative-density}.
Fix $T<\infty$ and $0<S<\min\{T,T_*\}$.
The estimates below hold for all $\gamma\ge1$. Indeed, the effective-velocity estimate of Section \ref{effvelocsec}, and Proposition~\ref{iso-v:proposition} for $\gamma=1$, give
\[
\|v\|_{L^\infty((0,S)\times\mathbb R^N)}\le V,
\]
where $V$ depends only on the initial data and $T$, and is independent
of $S$. With this choice of $V$, the quantity $\Lambda_*$ in \eqref{eq:en-Lambda} is independent of $S$. Since $\Lambda(S)\le\Lambda_*$ and $\int_0^S\lambda(t)\,dt\le T\Lambda_*,$ we obtain 
  \begin{equation}
      \Theta(S)\le\Theta_T, \qquad \Theta_T:= C_N\Lambda_*^{(N+2)/4}(1+T)^{1/2} e^{C_N\Lambda_*T}.
  \end{equation}
  The constant $\Theta_T$ depends only on the initial data and $T$, and is independent of $S$. Hence \eqref{eq:en-two-sided-precise} gives
\[
C_\rho^{-1}\le\frac{\rho(t,x)}{\omega(x)}\le C_\rho,
\qquad
C_\rho:=\Theta_T\max\{1,M_a,M_z\}.
\]
By \eqref{eq:en-two-sided-initial}, we may also choose
\[
R:=\Theta_Te^{2L}\|\rho_0 \|_{L^{\infty}}
\]
so that $\rho\le R$ on the same interval. We use these bounds below
without repeating the relative-density iteration.

\subsection{The bound for the
$\left\|\frac{\nabla\rho(t)}{\omega}\right\|_{L^\infty}$}

\subsubsection[An estimate for the weighted gradients of the conservative variables]
{An $L^6$ estimate for the weighted gradients of the conservative variables}
\label{en:weighted-gradients}

Set $m_i=\rho v_i$ and retain $g=\nabla\log\rho$.
The equation for $v_i$ reads
\[
\partial_t v_i-\Delta v_i
=-\sum_{j=1}^N(v_j-2g_j)\partial_jv_i
-\gamma\rho^{\gamma-1}g_i.
\]
Using this equation together with the continuity equation, we obtain
\begin{align*}
\partial_t m_i-\Delta m_i
&=v_i(\rho_t-\Delta\rho)
+\rho(\partial_t v_i-\Delta v_i)
-2\sum_{j=1}^N\partial_j\rho\,\partial_jv_i\\
&=-v_i\sum_{j=1}^N\partial_j(\rho v_j)
-\rho\sum_{j=1}^Nv_j\partial_jv_i
-\partial_i(\rho^\gamma)\\
&=-\sum_{j=1}^N\partial_j(\rho v_iv_j)
-\partial_i(\rho^\gamma).
\end{align*}
In the second equality, the terms containing
$2\rho g_j\partial_jv_i$ cancel because $\partial_j\rho=\rho g_j$.
Thus
\begin{equation}\label{en:conservative-system}
\begin{cases}
\displaystyle
\rho_t-\Delta\rho=-\sum_{j=1}^N\partial_jm_j,\\[4pt]
\displaystyle
\partial_t m_i-\Delta m_i
=-\sum_{j=1}^N\partial_j\!\left(\frac{m_im_j}{\rho}\right)
-\partial_i(\rho^\gamma),\qquad i=1,\ldots,N.
\end{cases}
\end{equation}

Introduce
\begin{equation}\label{en:weighted-components}
Z_k=\frac{\partial_k\rho}{\omega},
\qquad M_{ki}=\frac{\partial_km_i}{\omega},
\qquad
\Xi=\sum_{k=1}^NZ_k^2+\sum_{k,i=1}^NM_{ki}^2.
\end{equation}
Since $m_i=\rho v_i$, direct differentiation gives
\[
\frac1\omega\partial_j\!\left(\frac{m_im_j}{\rho}\right)
=v_jM_{ji}+v_iM_{jj}-v_iv_jZ_j,
\qquad
\frac1\omega\partial_i(\rho^\gamma)
=\gamma\rho^{\gamma-1}Z_i.
\]
We therefore set
\begin{equation}\label{en:weighted-sources}
\mathcal A=\sum_{j=1}^NM_{jj},
\qquad
\mathcal B_i
=\sum_{j=1}^N
\bigl(v_jM_{ji}+v_iM_{jj}-v_iv_jZ_j\bigr)
+\gamma\rho^{\gamma-1}Z_i.
\end{equation}
The right-hand sides of \eqref{en:conservative-system} are
$-\omega\mathcal A$ and $-\omega\mathcal B_i$, respectively.
The assumed bounds for $\rho$ and $v$ imply
\begin{equation}\label{en:source-coefficient-bound}
|\mathcal A|^2+\sum_{i=1}^N|\mathcal B_i|^2\le B^2\Xi,
\qquad B=C_N\bigl(1+V^2+\gamma R^{\gamma-1}\bigr).
\end{equation}
In particular, these coefficients can be estimated without using a bound
for $g$.

Differentiate \eqref{en:conservative-system} with respect to $x_k$
and divide by $\omega$. Since $\omega$ is independent of time,
$\nabla\omega=\omega\ell$, and
$\Delta\omega=\omega(\operatorname{div}\ell+|\ell|^2)$, we find
\begin{align}
\partial_t Z_k-\Delta Z_k
&=2\ell\cdot\nabla Z_k
+(\operatorname{div}\ell+|\ell|^2)Z_k
-\partial_k\mathcal A-\ell_k\mathcal A,
\label{en:Z-first}\\
\partial_t M_{ki}-\Delta M_{ki}
&=2\ell\cdot\nabla M_{ki}
+(\operatorname{div}\ell+|\ell|^2)M_{ki}
-\partial_k\mathcal B_i-\ell_k\mathcal B_i.
\label{en:M-first}
\end{align}
We keep $\partial_k\mathcal A$ and $\partial_k\mathcal B_i$
in this form and integrate them by parts when testing the equations.

For this purpose, put $d_*=|\ell|^2-\operatorname{div}\ell$ and define
the vector fields $\mathcal F_k$ and $\mathcal G_{ki}$ by
\begin{align*}
(\mathcal F_k)_j&=2\ell_jZ_k-\delta_{jk}\mathcal A,
&\mathcal R_k&=d_*Z_k-\ell_k\mathcal A,\\
(\mathcal G_{ki})_j&=2\ell_jM_{ki}-\delta_{jk}\mathcal B_i,
&\mathcal S_{ki}&=d_*M_{ki}-\ell_k\mathcal B_i.
\end{align*}
Equations \eqref{en:Z-first}--\eqref{en:M-first} become
\begin{equation}\label{en:component-divergence}
\begin{cases}
\partial_t Z_k-\Delta Z_k
=\operatorname{div}\mathcal F_k+\mathcal R_k,\\
\partial_t M_{ki}-\Delta M_{ki}
=\operatorname{div}\mathcal G_{ki}+\mathcal S_{ki}.
\end{cases}
\end{equation}
By \eqref{eq:en-weight-bounds} and
\eqref{en:source-coefficient-bound}, we may choose
\[
K=C_N\bigl\{1+L+L^2+(1+L)B\bigr\}
\]
such that
\begin{equation}\label{en:flux-remainder-bounds}
\begin{aligned}
\sum_k|\mathcal F_k|^2+\sum_{k,i}|\mathcal G_{ki}|^2
&\le K^2\Xi,\\
\sum_k|\mathcal R_k|^2+\sum_{k,i}|\mathcal S_{ki}|^2
&\le K^2\Xi.
\end{aligned}
\end{equation}

Fix $y\in\mathbb R^N$ and use the weight $\phi_y$ introduced in
Subsection~\ref{subsec:a-upper}. Then $|\nabla\phi_y|\le\phi_y$,
$\int\phi_ydx \le C_N$, and
$\phi_y\ge e^{-\sqrt5}$ on $B_2(y)$.
Write
\[
\mathcal D=\sum_k|\nabla Z_k|^2
+\sum_{k,i}|\nabla M_{ki}|^2,
\qquad
E_y=\int\phi_y\Xi^3dx,
\qquad
D_y=\int\phi_y\Xi^2\mathcal Ddx.
\]
We test the first equation in \eqref{en:component-divergence}
with $\phi_y\Xi^2Z_k$ and sum over $k$.
We then test the second equation with $\phi_y\Xi^2M_{ki}$
and sum over $k,i$. Adding the resulting identities, the time derivatives
give
\[
\sum_k\int\phi_y\Xi^2Z_k\partial_tZ_kdx
+\sum_{k,i}\int\phi_y\Xi^2M_{ki}\partial_tM_{ki}dx
=\frac16\frac{d}{dt}E_y.
\]
For the diffusion terms, observe that
\[
\sum_k Z_k\nabla Z_k+\sum_{k,i}M_{ki}\nabla M_{ki}
=\frac12\nabla\Xi.
\]
Consequently, the terms in which no derivative falls on $\phi_y$ are
\begin{align*}
&\sum_k\int\phi_y\nabla Z_k\cdot\nabla(\Xi^2Z_k)dx
+\sum_{k,i}\int\phi_y\nabla M_{ki}\cdot\nabla(\Xi^2M_{ki})dx\\
&\qquad
=D_y+\int\phi_y\Xi|\nabla\Xi|^2dx.
\end{align*}
The second term on the right is nonnegative. Keeping the terms containing
$\nabla\phi_y$ on the other side, we obtain
\begin{equation}\label{en:sixth-identity}
\frac16E_y'+D_y+\int\phi_y\Xi|\nabla\Xi|^2dx
=I_1+I_2+I_3+I_4,
\end{equation}
where
\begin{align*}
I_1={}&-\int\Xi^2\nabla\phi_y\cdot
\left(\sum_kZ_k\nabla Z_k
+\sum_{k,i}M_{ki}\nabla M_{ki}\right)dx,\\
I_2={}&-\sum_k\int\phi_y\mathcal F_k\cdot\nabla(\Xi^2Z_k)dx
-\sum_{k,i}\int\phi_y\mathcal G_{ki}\cdot\nabla(\Xi^2M_{ki})dx,\\
I_3={}&-\int\Xi^2\nabla\phi_y\cdot
\left(\sum_kZ_k\mathcal F_k
+\sum_{k,i}M_{ki}\mathcal G_{ki}\right)dx,\\
I_4={}&\int\phi_y\Xi^2
\left(\sum_k\mathcal R_kZ_k
+\sum_{k,i}\mathcal S_{ki}M_{ki}\right)dx.
\end{align*}

Cauchy's inequality gives
\[
|\nabla\Xi|\le C_N\Xi^{1/2}\mathcal D^{1/2}.
\]
Using the product rule and the triangle inequality, we also have
\begin{align*}
&\left(\sum_k|\nabla(\Xi^2Z_k)|^2
+\sum_{k,i}|\nabla(\Xi^2M_{ki})|^2\right)^{1/2}\\
&\qquad\le C_N\Xi^2\mathcal D^{1/2}
+C_N\Xi^{3/2}|\nabla\Xi|
\le C_N\Xi^2\mathcal D^{1/2}.
\end{align*}
It follows from $|\nabla\phi_y|\le\phi_y$ and Young's inequality that
\begin{align*}
|I_1|
&\le C_N\int\phi_y\Xi^{5/2}\mathcal D^{1/2}dx
\le\frac14D_y+C_NE_y,\\
|I_2|
&\le C_NK\int\phi_y\Xi^{5/2}\mathcal D^{1/2}dx
\le\frac14D_y+C_NK^2E_y.
\end{align*}
The remaining two terms are estimated directly by
\eqref{en:flux-remainder-bounds}:
\[
|I_3|\le C_NKE_y,
\qquad |I_4|\le C_NKE_y.
\]
Substitution into \eqref{en:sixth-identity} therefore yields
\begin{equation}\label{en:sixth-energy}
E_y'+3D_y\le C_N(1+K^2)E_y.
\end{equation}
At $t=0$,
\[
Z_k(0)=\frac{\rho_0}{\omega}(g_0)_k,
\qquad
M_{ki}(0)=\frac{\rho_0}{\omega}
\bigl(\partial_k(v_0)_i+(v_0)_i(g_0)_k\bigr).
\]
Since $v_0=u_0+g_0$, $u_0\in H^2\hookrightarrow L^\infty$, $g_0\in L^\infty$, and $\nabla u_0,\nabla g_0\in H^1\hookrightarrow L^6$ for $N=2,3$, we have $v_0\in L^\infty$ and $\nabla v_0\in L^6$.
Using \eqref{eq:en-weight-bounds} and the integrability of $\phi_y$,
we find
\begin{equation}\label{en:initial-sixth}
\sup_yE_y(0)^{1/6}
\le C_Ne^L\left\{\|\nabla v_0\|_{L^6}
+L\bigl(1+\|v_0\|_{L^\infty}\bigr)\right\}
=:M_E.
\end{equation}
No global $L^6$ assumption on $g_0$ is needed here, since its bounded
contribution is integrated against $\phi_y$.
Gronwall's inequality applied to \eqref{en:sixth-energy}, followed by
the lower bound for $\phi_y$ on $B_2(y)$, gives
\begin{equation}\label{en:local-sixth}
\sup_{y\in\mathbb R^N}\sup_{0\le t\le S}
\|\Xi(t)^{1/2}\|_{L^6(B_2(y))}
\le M_6,
\qquad
M_6=C_Ne^{C_N(1+K^2)T}M_{\mathrm E}.
\end{equation}
Thus each $Z_k$ and $M_{ki}$ has a uniformly local $L^6$ bound,
with a constant independent of $S$.

\subsubsection{Estimating
$\left\|\frac{\nabla\rho(t)}{\omega}\right\|_{L^\infty}$
by truncation}
\label{en:truncation-section}

To pass from \eqref{en:local-sixth} to a bound for the density gradient,
we use the following scalar estimate.

\begin{lemma}\label{en:truncation-lemma}
Let $\Omega=B_2(y)\subset\mathbb R^N$, $N=2,3$, and
$Q_S=(0,S)\times\Omega$. Suppose that $U$ is an energy solution of
\[
U_t-\Delta U=\operatorname{div}F+H,
\qquad U|_{\partial\Omega}=0,
\qquad \|U(0)\|_{L^\infty(\Omega)}\le M_0,
\]
where $F,H\in L^6(Q_S)$. Set
\[
M=\|F\|_{L^6(Q_S)}+\|H\|_{L^6(Q_S)}.
\]
Then
\begin{equation}\label{en:truncation-result}
\|U\|_{L^\infty(Q_S)}
\le M_0+C_NM|Q_S|^{\sigma_N},
\qquad \sigma_N=\frac1{N+2}-\frac16>0.
\end{equation}
The constant is independent of $y$ and $S$.
\end{lemma}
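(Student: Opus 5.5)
We prove \eqref{en:truncation-result} by a De Giorgi iteration on the levels of $U$. It suffices to bound $U$ from above, since $-U$ solves the same problem with data $(-F,-H)$. The Dirichlet condition lets us test with truncations without spatial cutoffs. For $k\ge M_0$ set $w_k=(U-k)_+$. Then $w_k(0)=0$, and $w_k$ vanishes on $\partial\Omega$. Let $A_k=\{(t,x)\in Q_S:U>k\}$.

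\textbf{Step 1: the energy inequality on level sets.} Testing the equation with $w_k$ on $(0,t)$ gives
\[
\sup_{0<t<S}\tfrac12\|w_k(t)\|_{L^2}^2+\iint_{Q_S}|\nabla w_k|^2
\le\iint_{A_k}|F||\nabla w_k|+\iint_{A_k}|H|w_k .
\]
The flux term is absorbed by Young's inequality, leaving $\iint_{A_k}|F|^2\le\|F\|_{L^6}^2|A_k|^{2/3}$ by H\"older.

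For the source term, apply H\"older with $w_k\in L^{2\chi}$, where $\chi=(N+2)/N$, and use the parabolic Sobolev inequality \eqref{eq:en-parabolic-sobolev}. Writing $Y_k$ for the energy, we have $\|w_k\|_{L^{2\chi}(Q_S)}\le C_N Y_k^{1/2}$. Hence
\[
\iint_{A_k}|H|w_k\le \|H\|_{L^6}|A_k|^{1-\frac16-\frac1{2\chi}}C_NY_k^{1/2}.
\]
After Young's inequality this gives
\[
Y_k\le C_N M^2\bigl(|A_k|^{2/3}+|A_k|^{2-\frac13-\frac1\chi}\bigr).
\]
Both exponents are at least $2/3$ when $|A_k|\le|Q_S|$. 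The second is larger because $\chi>1$, so it is controlled by $|A_k|^{2/3}$ times a power of $|Q_S|$. Hence
\[
Y_k\le C_NM^2|A_k|^{2/3}\max(1,|Q_S|)^{\theta}.
\]
I would instead keep the two terms and track the exponents so that the final result scales correctly. Cleanest of all is to rescale first, which I describe next.

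\textbf{Step 2: rescaling and the iteration.} Replace $U$ by $(U-M_0)/\lambda$, with $\lambda$ to be chosen as a multiple of $M|Q_S|^{\sigma_N}$. For $h>k$ we have $|A_h|\le(h-k)^{-2\chi}\|w_k\|_{L^{2\chi}}^{2\chi}$. Combined with Step 1 this yields
\[
|A_h|\le \frac{C_N (M/\lambda)^{2\chi}}{(h-k)^{2\chi}}|A_k|^{\frac{2\chi}{3}}.
\]
Here the exponent $\beta=2\chi/3=\frac{2(N+2)}{3N}$ exceeds $1$ exactly when $N<4$; this uses $6>N+2$.

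Take the levels $k_n=M_0+\lambda(1-2^{-n})$. The standard geometric lemma then gives $|A_{k_n}|\to0$, provided $|A_{k_0}|\le|Q_S|$ is below the threshold $(C_N(M/\lambda)^{2\chi})^{-1/(\beta-1)}$, up to a constant. Solving for $\lambda$ gives
\[
\lambda\simeq C_NM|Q_S|^{(\beta-1)/(2\chi)}.
\]
One checks that $(\beta-1)/(2\chi)=\frac13-\frac{N}{2(N+2)}\cdot\ldots$ reduces to $\frac1{N+2}-\frac16=\sigma_N$. It is cleaner to verify this directly by dimensional analysis: $\|F\|_{L^6}$ scales like $U\cdot\text{length}^{-1+(N+2)/6}$, which matches $\sigma_N$.

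\textbf{Main obstacle.} The delicate point is the exponent bookkeeping for the $H$ term, which has a different homogeneity than the flux term. A naive Young's inequality mixes powers of $|A_k|$, and the conclusion would then no longer be the pure power $|Q_S|^{\sigma_N}$. My plan is to estimate $H$ by duality: write $H$ as the divergence of $\nabla\Delta^{-1}(H\mathbf 1_\Omega)$ on a larger ball, which lies in $L^6$ locally with norm $\le C\|H\|_{L^6}$ on bounded sets. Alternatively, I will accept a constant depending on an upper bound for $|Q_S|$; in this application $S\le T$ and $|\Omega|=|B_2|$ are fixed. The constants depend only on $N$, since the Sobolev constant on $B_2(y)$ with zero trace is translation invariant; so they are independent of $y$ and $S$, as claimed.
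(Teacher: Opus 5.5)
\textbf{The gap.} Your Step 2 runs the iteration from the pure-power inequality $Y_k\le C_NM^2|A_k|^{2/3}$, but Step 1 does not prove it. Estimating $\iint_{A_k}|H|w_k$ by H\"older against $\|w_k\|_{L^{2\chi}}$ and then Young leaves the extra term $M^2|A_k|^{5/3-1/\chi}$. Carry this through the iteration with $q_n=|A_{k_n}|/|Q_S|$. The level gap then has to be
\[
D\simeq M\bigl(|Q_S|^{\sigma_N}+|Q_S|^{\sigma_N+(1-1/\chi)/2}\bigr).
\]
This is the claimed bound only when $|Q_S|$ is bounded. The constant would then depend on an upper bound for $S$, which contradicts the last sentence of the lemma. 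Your fallback of accepting such a constant therefore proves a weaker lemma: it is enough for the application, since $S\le T$, but it is not the statement.

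The normalization $U\mapsto (U-M_0)/\lambda$ cannot repair this. It divides $U$, $F$ and $H$ by the same number, so the relative homogeneity of the two source terms is unchanged. You identify this obstacle correctly in your last paragraph, but the argument you actually commit to does not close.

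\textbf{The missing idea.} The mismatch between $F$ and $H$ is a spatial length, and the spatial size of $\Omega=B_2(y)$ is fixed. The paper therefore uses Poincar\'e's inequality in space at each time. Since $w_k(t)$ has zero trace on $\partial B_2(y)$ and is supported in $\{U(t)>k\}$,
\[
\Bigl|\int_\Omega H w_k\,dx\Bigr|\le C_N\,\|H(t)\mathbf 1_{\{U(t)>k\}}\|_{L^2}\,\|\nabla w_k(t)\|_{L^2}.
\]
After Young's inequality, the $H$ term contributes $C_N\iint_{A_k}|H|^2\le C_N\|H\|_{L^6}^2|A_k|^{2/3}$, exactly like the flux term. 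This gives $Y_k\le C_NM^2|A_k|^{2/3}$ with $C_N$ depending only on $N$: the Poincar\'e constant of a ball of radius $2$ is translation invariant, and $S$ never enters. Your Step 2 then goes through with $D=C_NM|Q_S|^{\sigma_N}$.

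\textbf{Your duality alternative.} It would also work if carried out. Let $\Gamma$ be the fundamental solution of the Laplacian and set $G=\nabla\Gamma*(H\mathbf 1_\Omega)$. Then $\operatorname{div}G=H$ in $\Omega$. Young's convolution inequality with the integrable kernel $|x|^{1-N}\mathbf 1_{B_4}$ gives $\|G\|_{L^6(Q_S)}\le C_N\|H\|_{L^6(Q_S)}$, which reduces the problem to the pure-divergence case. As written, however, this is only a plan, not part of the proof.
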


\begin{proof}
For $k\ge M_0$, let
\[
w=(U-k)_+,
\qquad A_k=\{(t,x)\in Q_S:U(t,x)>k\}.
\]
Then $w(0)=0$ and $w$ has zero boundary trace.
Testing the equation with $w$ and integrating from $0$ to $t$ gives
\[
\frac12\|w(t)\|_{L^2}^2
+\int_0^t\|\nabla w(s)\|_{L^2}^2\,\mathrm ds
=-\int_0^t\!\!\int_\Omega F\cdot\nabla w
+\int_0^t\!\!\int_\Omega Hw.
\]
The first term on the right is bounded by
\[
\left|\int_0^t\!\!\int_\Omega F\cdot\nabla w\right|
\le\frac14\int_0^t\!\!\int_\Omega|\nabla w|^2
+C\iint_{A_k}|F|^2.
\]
For the second term, Poincar\'e's inequality and Young's inequality give
\begin{align*}
\left|\int_0^t\!\!\int_\Omega Hw\right|
&\le C_N\int_0^t
\|H(s)\mathbf1_{\{U(s)>k\}}\|_{L^2}
\|\nabla w(s)\|_{L^2}\,\mathrm ds\\
&\le\frac14\int_0^t\!\!\int_\Omega|\nabla w|^2
+C_N\iint_{A_k}|H|^2.
\end{align*}
The Poincar\'e constant is the same for every translate of $B_2$.
H\"older's inequality in space and time yields
\[
\iint_{A_k}(|F|^2+|H|^2)\le M^2|A_k|^{2/3}.
\]
Hence
\begin{equation}\label{en:level-energy}
\sup_{0\le t\le S}\|w(t)\|_{L^2}^2
+\iint_{Q_S}|\nabla w|^2
\le C_NM^2|A_k|^{2/3}.
\end{equation}

With $\chi=(N+2)/N$ as above, the parabolic Sobolev inequality
states that
\[
\|w\|_{L^{2\chi}(Q_S)}^2
\le C_N
\left(\sup_{0\le t\le S}\|w(t)\|_{L^2}^2\right)^{2/(N+2)}
\left(\iint_{Q_S}|\nabla w|^2\right)^{N/(N+2)}.
\]
Since $w\ge h-k$ on $A_h$ whenever $h>k$, it follows that
\begin{equation}\label{en:level-inequality}
(h-k)^2|A_h|^{1/\chi}
\le\|w\|_{L^{2\chi}(Q_S)}^2
\le C_NM^2|A_k|^{2/3}.
\end{equation}
The exponent $\beta=2\chi/3$ is strictly greater than one:
$\beta=10/9$ for $N=3$ and $\beta=4/3$ for $N=2$.

Choose $D>0$, to be fixed below, and set
\[
k_n=M_0+D(1-2^{-n}),
\qquad q_n=\frac{|A_{k_n}|}{|Q_S|},
\qquad \delta=\beta-1,
\qquad b_*=2^{2\chi}.
\]
Taking $h=k_{n+1}$ and $k=k_n$ in
\eqref{en:level-inequality}, and using
$k_{n+1}-k_n=D2^{-n-1}$, we obtain
\[
q_{n+1}
\le\left(\frac{C_NM^2}{D^2}\right)^\chi
2^{2\chi(n+1)}|Q_S|^\delta q_n^{1+\delta}.
\]
Assume first that $M>0$. Take
\[
D=C_NM|Q_S|^{\delta/(2\chi)},
\]
with $C_N$ sufficiently large that the preceding inequality implies
\[
q_{n+1}\le b_*^{-1/\delta}b_*^nq_n^{1+\delta}.
\]
Since $q_0\le1$, induction gives $q_n\le b_*^{-n/\delta}$.
Indeed, this bound at level $n$ yields
\[
q_{n+1}
\le b_*^{-1/\delta+n-n(1+\delta)/\delta}
=b_*^{-(n+1)/\delta}.
\]
Thus $|A_{k_n}|\to0$ and $U\le M_0+D$ almost everywhere in $Q_S$.
Applying the same argument to $-U$ gives $U\ge-M_0-D$.
Finally,
\[
\frac{\delta}{2\chi}=\frac1{N+2}-\frac16=\sigma_N,
\]
which proves \eqref{en:truncation-result}.
If $M=0$, testing with $(U-M_0)_+$ and $(-U-M_0)_+$
gives the conclusion directly.
\end{proof}

The condition $6>N+2$ is what makes the exponent in the level-set
iteration greater than one.

We now return to the continuity equation.
Differentiating it with respect to $x_k$ and using
$\partial_km_j=\omega M_{kj}$, we find that
\eqref{en:Z-first} is equivalent to
\begin{align}
\partial_t Z_k-\Delta Z_k
&=2\ell\cdot\nabla Z_k
+(\operatorname{div}\ell+|\ell|^2)Z_k
-\sum_{j=1}^N(\partial_jM_{kj}+\ell_jM_{kj})\notag\\
&=\operatorname{div}F_k+H_k,
\label{en:scalar-Z-equation}
\end{align}
where
\begin{equation}\label{en:scalar-Z-sources}
(F_k)_j=2\ell_jZ_k-M_{kj},
\qquad
H_k=(|\ell|^2-\operatorname{div}\ell)Z_k
-\sum_{j=1}^N\ell_jM_{kj}.
\end{equation}
Here the continuity equation has been differentiated before the
factor $\omega$ is removed. In particular,
\[
\frac1\omega\partial_k\!\left(\sum_j\partial_jm_j\right)
=\sum_j\frac1\omega\partial_j(\omega M_{kj})
=\sum_j(\partial_jM_{kj}+\ell_jM_{kj}).
\]
By \eqref{en:local-sixth},
\begin{equation}\label{en:scalar-source-bound}
\begin{aligned}
\sup_y\sup_{0\le t\le S}\bigl(
&\|Z_k(t)\|_{L^6(B_2(y))}
+\|F_k(t)\|_{L^6(B_2(y))}\\
&+\|H_k(t)\|_{L^6(B_2(y))}\bigr)
\le C_N(1+L+L^2)M_6.
\end{aligned}
\end{equation}

Fix $y\in\mathbb R^N$ and choose $0\le\zeta\le1$ with
$\zeta\in C_c^\infty(B_2(y))$, $\zeta=1$ on $B_1(y)$, and
$|\nabla\zeta|+|\Delta\zeta|\le C_N$.
For $U=\zeta Z_k$, the product rule gives
\[
U_t-\Delta U
=\zeta\operatorname{div}F_k+\zeta H_k
-2\nabla\zeta\cdot\nabla Z_k-Z_k\Delta\zeta.
\]
Using
\[
-2\nabla\zeta\cdot\nabla Z_k
=-2\operatorname{div}(Z_k\nabla\zeta)+2Z_k\Delta\zeta,
\]
we can write this as
\begin{equation}\label{en:cutoff-equation}
U_t-\Delta U=\operatorname{div}\widetilde F+\widetilde H,
\qquad
\begin{cases}
\widetilde F=\zeta F_k-2Z_k\nabla\zeta,\\
\widetilde H=\zeta H_k-\nabla\zeta\cdot F_k+Z_k\Delta\zeta.
\end{cases}
\end{equation}
Since $\zeta$ is compactly supported in $B_2(y)$, $U$ has zero
boundary trace. Moreover,
\[
\|U(0)\|_{L^\infty}\le e^LL,
\]
and \eqref{en:scalar-source-bound} implies
\begin{equation}\label{en:cutoff-source-bound}
\|\widetilde F\|_{L^6(Q_S)}
+\|\widetilde H\|_{L^6(Q_S)}
\le C_N(1+L+L^2)S^{1/6}M_6,
\qquad Q_S=(0,S)\times B_2(y).
\end{equation}
Applying Lemma~\ref{en:truncation-lemma} to
\eqref{en:cutoff-equation} and noting that
$|Q_S|=S|B_2|$ and $1/6+\sigma_N=1/(N+2)$, we obtain
\[
\|Z_k\|_{L^\infty((0,S)\times B_1(y))}
\le e^LL+C_N(1+L+L^2)S^{1/(N+2)}M_6.
\]
The constant is independent of $y$. Taking the supremum over the
ball centers and combining the component estimates therefore gives
\begin{equation}\label{en:density-gradient-bound}
\sup_{0\le t\le S}
\left\|\frac{\nabla\rho(t)}{\omega}\right\|_{L^\infty}
\le C_Ne^LL+C_N(1+L+L^2)T^{1/(N+2)}M_6
=:M_\nabla.
\end{equation}
The estimate first holds as an essential supremum in space and time;
the time continuity supplied by the local solution class gives the
stated bound at every time.

\subsection{The bound for the logarithmic density gradient}

The relative density estimate recalled at the beginning of this section is
\[
C_\rho^{-1}\le\frac{\rho(t,x)}{\omega(x)}\le C_\rho,
\qquad 0\le t\le S,
\]
where $C_\rho$ is independent of $S$.
Dividing the bound for $\nabla\rho/\omega$ by the lower bound for
$\rho/\omega$, we conclude that
\begin{equation}\label{en:final-log-bound}
\begin{aligned}
\sup_{0\le t\le S}\|\nabla\log\rho(t)\|_{L^\infty}
&=\sup_{0\le t\le S}
\left\|\frac{\nabla\rho(t)/\omega}{\rho(t)/\omega}\right\|_{L^\infty}\\
&\le C_\rho M_\nabla.
\end{aligned}
\end{equation}
\section{Higher-order estimates for the density and the effective velocity}
\label{sec:higher-order}
\subsection{Higher-order estimates when $\gamma>1$}
We now derive higher-order estimates from the bounds obtained in the
preceding sections. Throughout this section, let $\gamma>1$ and write
\[
g=\nabla\log\rho,\qquad
\mathfrak p=\frac{\nabla P(\rho)}{\rho}
           =\gamma\rho^{\gamma-1}g,\qquad
\Phi=|g|^2-v\cdot g.
\]
The equations take the form
\begin{equation}\label{ho:system}
\begin{cases}
\rho_t-\Delta\rho=-\operatorname{div}(\rho v),\\
v_t-\Delta v=-(v-2g)\cdot\nabla v-\mathfrak p,\\
g_t-\Delta g=\nabla\Phi-\nabla\operatorname{div}v.
\end{cases}
\end{equation}
We retain the density equation in divergence form.

Fix $T<\infty$ and $0<S<\min\{T,T_*\}$. The preceding estimates give
\begin{equation}\label{ho:known-bounds}
\sup_{0\le t\le S}
\bigl(
\|\rho(t) \|_{L^{\infty}}+\|v(t) \|_{L^{\infty}}+\|g(t) \|_{L^{\infty}}
\bigr)
\le R+V+G_\infty, 
\end{equation}
where the three norms are bounded by $R$, $V$ and $G_\infty$,
respectively. Moreover,
\[
\frac{\rho(t,x)}{\omega(x)}\le C_\rho,
\qquad
\omega(x)\le e^L\rho_0(x),
\]
and hence
\begin{equation}\label{ho:relative-upper}
\rho(t,x)\le A_\rho\rho_0(x),
\qquad
A_\rho=C_\rho e^L.
\end{equation}
All these constants are independent of $S$.

We use the initial regularity of the local solution:
\begin{equation}\label{ho:initial-data}
\begin{aligned}
&0<\rho_0\in H^3,\qquad u_0\in H^2,\\
&g_0=\nabla\log\rho_0\in L^\infty,\qquad
\nabla g_0\in H^1,\qquad
\mathfrak p_0=\gamma\rho_0^{\gamma-1}g_0\in L^2.
\end{aligned}
\end{equation}
Setting $v_0=u_0+g_0$, we have
\[
v_0\in L^\infty,\qquad \nabla v_0\in H^1.
\]
We do not assume that $v_0$ or $g_0$ belongs to $L^2$.
All spatial norms below are taken over $\mathbb R^N$.
The constant $C$ denotes a positive constant that may depend on
$N,\gamma,T,R,V,G_\infty,A_\rho$ and the initial norms in
\eqref{ho:initial-data}, but is independent of $S$.

\medskip
\noindent
\begin{proposition}[Higher-order estimates]\label{higherordergamma>1}
Let $N=2,3$, and let $(\rho,v)$ be the positive local strong solution
with initial data satisfying \eqref{ho:initial-data}.
Under \eqref{ho:known-bounds}--\eqref{ho:relative-upper}, we have
\begin{equation}\label{ho:main-estimate}
\begin{aligned}
&\sup_{0\le t\le S}\Bigl(
 \|\rho(t)\|_{H^3}^2+\|u(t)\|_{H^2}^2
 +\|\nabla v(t)\|_{H^1}^2+\|\nabla g(t)\|_{H^1}^2\\
&\qquad
 +\|v(t)-v_0\|_{H^2}^2
 +\|g(t)-g_0\|_{H^2}^2
 +\|\mathfrak p(t)\|_{L^{2}}^2
 \Bigr)\\
&\qquad
+\int_0^S\Bigl(
 \|\rho(t)\|_{H^4}^2+\|u(t)\|_{H^3}^2
 +\|\nabla v(t)\|_{H^2}^2
 +\|\nabla g(t)\|_{H^2}^2
 \Bigr)\,\mathrm dt
\le C.
\end{aligned}
\end{equation}
In addition,
\begin{equation}\label{ho:time-estimate}
\begin{aligned}
&\sup_{0\le t\le S}\Bigl(
 \|\rho_t(t)\|_{H^1}^2+\|u_t(t)\|_{L^{2}}^2
 +\|v_t(t)\|_{L^{2}}^2+\|g_t(t)\|_{L^{2}}^2
 \Bigr)\\
&\qquad
+\int_0^S\Bigl(
 \|\rho_t(t)\|_{H^2}^2+\|u_t(t)\|_{H^1}^2
 +\|v_t(t)\|_{H^1}^2+\|g_t(t)\|_{H^1}^2
 \Bigr)\,\mathrm dt
\le C.
\end{aligned}
\end{equation}
Moreover,
\begin{equation}\label{ho:time-continuity}
\begin{aligned}
&\rho\in C([0,S];H^3),\\
&u,\ v-v_0,\ g-g_0\in C([0,S];H^2),\\
&\nabla v,\nabla g\in C([0,S];H^1).
\end{aligned}
\end{equation}
\end{proposition}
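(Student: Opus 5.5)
We prove \eqref{ho:main-estimate}--\eqref{ho:time-continuity} by a chain of linear parabolic energy estimates. By \eqref{ho:known-bounds} every coefficient of \eqref{ho:system} is bounded, so the nonlinearities can be treated as linear sources with bounded coefficients. We work with the differences $\tilde v=v-v_0$, $\tilde g=g-g_0$ and with $\nabla v,\nabla g$ rather than with $v,g$, because neither $v_0$ nor $g_0$ need be in $L^2$. The first-level quantities are $\|\nabla v\|_{L^2}$, $\|\nabla g\|_{L^2}$ and $\|\mathfrak p\|_{L^2}$; the $H^1$ level of $\nabla v,\nabla g$ comes next. Everything else (the norms of $\rho$, $u=v-g$, and the time derivatives) is then read off algebraically or from the equations.

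\textbf{Step 1 (first-order level).}
\begin{itemize}
\item Test $\eqref{ho:system}_2$ with $-\Delta v$. Since $|(v-2g)\cdot\nabla v|\le (V+2G_\infty)|\nabla v|$, this gives
\[
\frac{d}{dt}\|\nabla v\|_{L^2}^2+\|\Delta v\|_{L^2}^2\le C\|\nabla v\|_{L^2}^2+C\|\mathfrak p\|_{L^2}^2 .
\]
\item For $\mathfrak p=\nabla(\frac{\gamma}{\gamma-1}\rho^{\gamma-1})$, use $\rho^{\gamma-1}=(\gamma-1)^{-1}\cdots$ through $c$. The equation for $c$ from Section~\ref{effvelocsec}, differentiated once and tested with $\mathfrak p$ (or $\mathfrak p$ tested directly with its own parabolic equation), gives
\[
\frac{d}{dt}\|\mathfrak p\|_{L^2}^2+\|\nabla\mathfrak p\|_{L^2}^2\le C(\|\mathfrak p\|^2_{L^2}+\|\nabla v\|^2_{L^2}+\|\rho^{\gamma-1}|g|^2\|^2_{L^2}).
\]
The last term is bounded by $G_\infty^2\|\mathfrak p\|^2_{L^2}$.
\item Test $\eqref{ho:system}_3$ with $-\Delta g$, equivalently estimate $\nabla g$ through $\nabla\Phi-\nabla\operatorname{div}v$. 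Because $\|\nabla\Phi\|_{L^2}\le C(\|\nabla g\|_{L^2}+\|\nabla v\|_{L^2})$, we get
\[
\frac{d}{dt}\|\nabla g\|^2_{L^2}+\|\Delta g\|^2_{L^2}\le C(\|\nabla g\|^2_{L^2}+\|\nabla v\|^2_{L^2})+\tfrac12\|\nabla^2 v\|^2_{L^2}.
\]
Adding a large multiple of the $v$ inequality absorbs the $\|\nabla^2 v\|^2_{L^2}$ term.
\item Gronwall then bounds $\nabla v,\nabla g,\mathfrak p$ in $L^\infty L^2$ and $\nabla^2v,\nabla^2g$ in $L^2L^2$.
\item The initial values are finite because $\nabla g_0,\nabla u_0\in H^1$ and $\mathfrak p_0\in L^2$.
\end{itemize}

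\textbf{Step 1' ($\tilde v$ and $\tilde g$ in $L^2$).} Write $\tilde v_t-\Delta\tilde v=\Delta v_0-(v-2g)\cdot\nabla v-\mathfrak p$. The right side lies in $L^2L^2$ by Step 1, since $\Delta v_0\in L^2$. This gives $\tilde v\in L^\infty L^2$. The same argument applied to $\tilde g$ uses the divergence form $\nabla(\Phi-\operatorname{div}v)$, with $\|\Phi\|_{L^2}$ controlled through $|g|^2-v\cdot g=g\cdot(g-v)$. I expect $\Phi\in L^2$ to fail globally, since $g,v\notin L^2$. I would instead test with $\tilde g$ and bound $\int\Phi\operatorname{div}\tilde g$ by $\|\Phi\|_{L^2}\|\nabla\tilde g\|_{L^2}$.

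\textbf{Main obstacle: integrability of $\Phi$.} Write $\Phi=g\cdot(g-v)=-g\cdot u$. Since $|g|\le G_\infty$, it suffices to show $u\in L^\infty L^2$. For $\gamma>1$ this is available: $u=-\tilde g+\tilde v+u_0$, so a bound on $u$ in $L^2$ is equivalent to closing the $\tilde v,\tilde g$ estimates jointly. I would therefore test the $u$-equation, obtained as the difference of the $v$ and $g$ equations, with $u$ directly. Its source $-\mathfrak p+\ldots$ lies in $L^2$. The bounded drift terms are absorbed by Gronwall. Terms like $\nabla(|g|^2-v\cdot g)=-\nabla(g\cdot u)$ are integrated by parts against $\operatorname{div}u$ and bounded by $G_\infty\|u\|_{L^2}\|\nabla u\|_{L^2}$. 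This closes a Gronwall inequality for $\|u\|_{L^2}$, which gives $\Phi\in L^\infty L^2$ and hence $\tilde g\in L^\infty L^2$.

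\textbf{Step 2 (second order and time derivatives).}
\begin{itemize}
\item Differentiate once more and test the $v$ and $g$ equations with $\Delta^2 v$ and $\Delta^2 g$. The nonlinear sources $\nabla\big((v-2g)\cdot\nabla v\big)$ and $\nabla^2\Phi$ involve products $\nabla g\,\nabla v$ and $\nabla g\,\nabla g$.
\item These products are handled by Gagliardo--Nirenberg: $\|\nabla g\|_{L^4}^2\le C\|\nabla g\|_{L^2}^{2-N/2}\|\nabla^2 g\|_{L^2}^{N/2}$, which is valid for $N\le3$ and absorbed with the $L^\infty$ bounds. This yields $\nabla v,\nabla g\in L^\infty H^1\cap L^2H^2$, so $\tilde v,\tilde g\in L^\infty H^2$ and $u\in L^\infty H^2\cap L^2H^3$.
\item For the density, use $\nabla\rho=\rho g$ and $\nabla^2\rho=\rho(\nabla g+g\otimes g)$. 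Together with $\rho\le A_\rho\rho_0$ and $\rho_0\in L^1\cap L^\infty$, this puts $\rho,\nabla\rho$ in $L^2$. The derivatives $\nabla^3\rho,\nabla^4\rho$ follow from the corresponding products involving $\nabla^2 g$ and $\nabla^3 g$.
\item The time derivatives are read off from the equations, and differentiating the equations in $t$ and testing gives the $L^2H^1$ bounds.
\end{itemize}

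\textbf{Step 3 (time continuity).} The continuity statements \eqref{ho:time-continuity} follow from the Lions--Magenes lemma applied to $L^2H^{k+1}\cap H^1H^{k-1}$.
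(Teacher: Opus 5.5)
Your argument is correct, but it departs from the paper's proof in three places.

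\textbf{The pressure.} You derive a parabolic equation for $\mathfrak p=\frac{\gamma}{\gamma-1}\nabla\rho^{\gamma-1}$, namely
\[
\mathfrak p_t-\Delta\mathfrak p=\gamma\nabla\bigl[\rho^{\gamma-1}\bigl((2-\gamma)|g|^2-v\cdot g-\operatorname{div}v\bigr)\bigr].
\]
The bracketed flux is pointwise bounded by $C(|\mathfrak p|+|\nabla v|)$. Testing with $\mathfrak p$ therefore gives $\mathfrak p\in L^\infty L^2$ and $\nabla\mathfrak p\in L^2L^2$, without ever needing $\rho^{\gamma-1}\in L^2$. The paper deliberately writes no evolution equation for $\rho^{\gamma-1}$. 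It uses the algebraic splitting \eqref{ho:pressure-decomposition} together with $\rho\le A_\rho\rho_0$, which ties $\|\mathfrak p\|_{L^2}$ to $Q=\|g-g_0\|_{L^2}^2$ and forces $Q$ into a single jointly closed Gronwall system. It then reads off $\nabla\mathfrak p$ and $\nabla^2\mathfrak p$ from \eqref{ho:pressure-first}--\eqref{ho:pressure-second}. Your route decouples the pressure from the affine $L^2$ norms. The paper's route avoids deriving and justifying an extra equation.

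\textbf{The density.} You obtain all norms of $\rho$ algebraically from $\nabla\rho=\rho g$, $\nabla^2\rho=\rho(\nabla g+g\otimes g)$, and so on, plus $\rho\le A_\rho\rho_0$. The paper instead runs zeroth- and third-order energy estimates on the density equation inside its weighted functional. Once $\nabla g\in L^\infty H^1\cap L^2H^2$ is known, your version is more elementary.

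\textbf{The order of the estimates.} You work sequentially rather than with one combined functional. This also closes: with your first-level $L^2_t$ bound on $\nabla^2v$ and $\nabla^2g$, even the $E_2^{3/2}$ growth produced by the $L^2$-based Gagliardo--Nirenberg inequality for $N=3$ is handled by Gronwall. The paper's $\|\nabla f\|_{L^4}^2\le C\|f\|_{L^\infty}\|\nabla^2f\|_{L^2}$ makes that step linear directly.

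\textbf{Minor points.}
\begin{itemize}
\item Your ``main obstacle'' is not one. $\Phi\in L^2$ is never needed: $\tilde g(0)=0$ and $\tilde g_t=\Delta g-\nabla\operatorname{div}v+\nabla\Phi$ with $\|\nabla\Phi\|_{L^2}\le C(\|\nabla g\|_{L^2}+\|\nabla v\|_{L^2})$. Hence $g_t\in L^2L^2$ and $\tilde g\in L^\infty L^2$ at once, which is \eqref{ho:affine-energy}. The detour through the $u$-equation is valid but superfluous (and, as you then note, $\Phi=-g\cdot u\in L^2$ anyway).
\item The $L^2H^1$ bounds for $v_t$ and $g_t$ need no time differentiation; differentiating the equations in space suffices.
\item Lions--Magenes cannot be applied to $v-v_0$ in $L^2H^3$, since $\nabla^2v_0$ is only in $L^2$. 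Apply it to $\nabla v$ and $\nabla g$, and combine with $v-v_0,\ g-g_0\in H^1(0,S;L^2)$, as the paper does.
\end{itemize}
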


\begin{proof}
For each fixed $S<T_*$, the local solution has the Sobolev regularity
needed for the following tests. They can be justified by smoothing,
spatial truncation and passage to the limit. The purpose of the
estimates is to obtain constants independent of $S$.

\medskip
\noindent
\textbf{Step 1: Estimates for the pressure term.}

Set
\[
r=v-v_0,\qquad z=g-g_0,\qquad
J(t)=\|r(t)\|_{L^{2}}^2,\qquad Q(t)=\|z(t)\|_{L^{2}}^2,\qquad
P_0=\|\mathfrak p_0\|_{L^{2}}^2.
\]
Since $r,z\in C([0,S];H^2)$ in the local solution class,
$J(t)$ and $Q(t)$ are well defined. We write
\begin{equation}\label{ho:pressure-decomposition}
\mathfrak p
=\gamma\rho^{\gamma-1}(g-g_0)
 +\left(\frac{\rho}{\rho_0}\right)^{\gamma-1}\mathfrak p_0.
\end{equation}
Using \eqref{ho:known-bounds} and \eqref{ho:relative-upper}, we obtain
\begin{equation}\label{ho:pressure-zero}
\|\mathfrak p(t)\|_{L^{2}}^2
\le C\bigl(Q(t)+P_0\bigr).
\end{equation}
This estimate does not require $\rho^{\gamma-1}\in L^2$.

Since $\partial_k\rho=\rho g_k$, direct differentiation gives
\begin{equation}\label{ho:pressure-first}
\partial_k\mathfrak p_i
=\gamma\rho^{\gamma-1}\partial_k g_i
 +(\gamma-1)g_k\mathfrak p_i.
\end{equation}
Differentiating once more yields
\begin{align}\label{ho:pressure-second}
\partial_{\ell k}\mathfrak p_i
={}&\gamma\rho^{\gamma-1}\partial_{\ell k}g_i
 +\gamma(\gamma-1)\rho^{\gamma-1}
  g_\ell\partial_k g_i
 \notag\\
&+(\gamma-1)(\partial_\ell g_k)\mathfrak p_i
 +(\gamma-1)g_k\partial_\ell\mathfrak p_i.
\end{align}
As $\|\mathfrak p \|_{L^{\infty}}\le\gamma R^{\gamma-1}G_\infty$,
\eqref{ho:pressure-zero}--\eqref{ho:pressure-second} imply
\begin{equation}\label{ho:pressure-derivatives}
\begin{aligned}
\|\nabla\mathfrak p\|_{L^{2}}^2
&\le C\bigl(\|\nabla g\|_{L^{2}}^2+Q+P_0\bigr),\\
\|\nabla^2\mathfrak p\|_{L^{2}}^2
&\le C\bigl(
 \|\nabla^2g\|_{L^{2}}^2+\|\nabla g\|_{L^{2}}^2+Q+P_0
 \bigr).
\end{aligned}
\end{equation}
In particular, no negative power of the density needs to be estimated.

\medskip
\noindent
\textbf{Step 2: Interpolation and product estimates.}

We shall use the following Gagliardo--Nirenberg interpolation inequalities
\begin{align}
\|\nabla f\|_{L^{4}}
&\le C_N\|f \|_{L^{\infty}}^{1/2}\|\nabla^2f\|_{L^{2}}^{1/2},
\label{ho:interpolation-two}\\
\|\nabla f\|_{L^{6}}
&\le C_N\|f \|_{L^{\infty}}^{2/3}\|\nabla^3f\|_{L^{2}}^{1/3},
\qquad
\|\nabla^2f\|_{L^{3}}
\le C_N\|f \|_{L^{\infty}}^{1/3}\|\nabla^3f\|_{L^{2}}^{2/3}.
\label{ho:interpolation-three}
\end{align}
These inequalities apply to bounded functions whose indicated
derivatives are square integrable and do not require $f\in L^2$. Thus they apply to both $v$ and $g$.


If $f$ and $h$ are bounded and have the required derivatives,
the product rule and these inequalities imply
\begin{align}
\|\nabla^j(fh)\|_{L^{2}}
&\le C\bigl(\|\nabla^jf\|_{L^{2}}+\|\nabla^jh\|_{L^{2}}\bigr),
&&j=1,2,3,
\label{ho:product}\\
\|\nabla^j(f\partial_kh)\|_{L^{2}}
&\le C\bigl(
 \|\nabla^{j+1}f\|_{L^{2}}+\|\nabla^{j+1}h\|_{L^{2}}
 \bigr),
&&j=1,2,
\label{ho:transport-product}
\end{align}
where the constants depend on their $L^\infty$ bounds.
The mixed terms are controlled, for example, by
\[
\|\nabla f\,\nabla h\|_{L^{2}}
\le\|\nabla f\|_{L^{4}}\|\nabla h\|_{L^{4}}
\le C\bigl(\|\nabla^2f\|_{L^{2}}+\|\nabla^2h\|_{L^{2}}\bigr)
\]
and
\[
\|\nabla f\,\nabla^2h\|_{L^{2}}
\le\|\nabla f\|_{L^{6}}\|\nabla^2h\|_{L^{3}}
\le C\bigl(\|\nabla^3f\|_{L^{2}}+\|\nabla^3h\|_{L^{2}}\bigr).
\]

\medskip
\noindent
\textbf{Step 3: First- and second-order energy estimates.}

Let $\lambda_0=4N^2$. For $j=1,2$, define
\begin{equation}\label{ho:energies}
\begin{aligned}
\mathcal E_j
&=\sum_{|\alpha|=j}
 \left(
 \lambda_0\|\partial^\alpha v\|_{L^{2}}^2
 +\|\partial^\alpha g\|_{L^{2}}^2
 +\|\partial^\alpha\rho\|_{L^{2}}^2
 \right),\\
\mathcal D_j
&=\sum_{|\alpha|=j}
 \left(
 \lambda_0\|\nabla\partial^\alpha v\|_{L^{2}}^2
 +\|\nabla\partial^\alpha g\|_{L^{2}}^2
 +\|\nabla\partial^\alpha\rho\|_{L^{2}}^2
 \right).
\end{aligned}
\end{equation}
Apply $\partial^\alpha$ to \eqref{ho:system}, test the three equations
with $\partial^\alpha\rho$, $\lambda_0\partial^\alpha v$
and $\partial^\alpha g$, respectively, and sum over $|\alpha|=j$.
Integration by parts gives
\begin{equation}\label{ho:energy-identity}
\frac12\mathcal E_j'+\mathcal D_j
=I_1+I_2+I_3+I_4+I_5,
\end{equation}
where
\begin{align*}
I_1
&=-\lambda_0\sum_{|\alpha|=j}
 \int
 \partial^\alpha\bigl((v-2g)\cdot\nabla v\bigr)
 \cdot\partial^\alpha v\,dx,\\
I_2
&=-\lambda_0\sum_{|\alpha|=j}
 \int
 \partial^\alpha\mathfrak p
 \cdot\partial^\alpha v\,dx,\\
I_3
&=-\sum_{|\alpha|=j}
 \int
 \partial^\alpha\Phi\,
 \operatorname{div}\partial^\alpha g\,dx,\\
I_4
&=\sum_{|\alpha|=j}
 \int
 \operatorname{div}\partial^\alpha v\,
 \operatorname{div}\partial^\alpha g\,dx,\\
I_5
&=\sum_{|\alpha|=j}
 \int
 \partial^\alpha(\rho v)
 \cdot\nabla\partial^\alpha\rho\,dx.
\end{align*}

By \eqref{ho:transport-product} and Young's inequality,
\[
|I_1|
\le C\mathcal D_j^{1/2}\mathcal E_j^{1/2}
\le\varepsilon\mathcal D_j+C_\varepsilon\mathcal E_j.
\]
The pressure estimates \eqref{ho:pressure-derivatives} give
\[
|I_2|
\le C\bigl(\mathcal E_1+\mathcal E_j+Q+P_0\bigr).
\]
Since $\Phi=|g|^2-v\cdot g$, \eqref{ho:product} yields
\[
\|\nabla^j\Phi\|_{L^{2}}
\le C\bigl(\|\nabla^jg\|_{L^{2}}+\|\nabla^jv\|_{L^{2}}\bigr).
\]
Consequently,
\[
|I_3|
\le\varepsilon\mathcal D_j+C_\varepsilon\mathcal E_j.
\]
For the second-order coupling, we use
$|\operatorname{div}w|\le\sqrt N|\nabla w|$ to obtain
\begin{align*}
|I_4|
&\le N\sum_{|\alpha|=j}
 \|\nabla\partial^\alpha v\|_{L^{2}}
 \|\nabla\partial^\alpha g\|_{L^{2}}\\
&\le\sum_{|\alpha|=j}
 \left(
 N^2\|\nabla\partial^\alpha v\|_{L^{2}}^2
 +\frac14\|\nabla\partial^\alpha g\|_{L^{2}}^2
 \right)
\le\frac14\mathcal D_j.
\end{align*}
Finally, applying \eqref{ho:product} directly to $\rho v$ gives
\[
|I_5|
\le\varepsilon\mathcal D_j+C_\varepsilon\mathcal E_j.
\]
Taking $\varepsilon=1/12$ in these estimates and substituting into
\eqref{ho:energy-identity}, we find
\begin{equation}\label{ho:joint-energy}
(\mathcal E_1+\mathcal E_2)'
+\mathcal D_1+\mathcal D_2
\le C\bigl(\mathcal E_1+\mathcal E_2+Q+P_0\bigr).
\end{equation}

We also need an estimate for $Q$. The last equation in
\eqref{ho:system} and \eqref{ho:product} imply
\[
\|g_t\|_{L^{2}}
\le C\bigl(
 \|\nabla^2g\|_{L^{2}}+\|\nabla^2v\|_{L^{2}}
 +\|\nabla g\|_{L^{2}}+\|\nabla v\|_{L^{2}}
 \bigr).
\]
Since $z_t=g_t$, it follows that
\begin{equation}\label{ho:affine-energy}
Q'=2\int z\cdot g_t\,dx
\le C\bigl(Q+\mathcal E_1+\mathcal E_2\bigr).
\end{equation}
Thus the term $Q$ in \eqref{ho:pressure-zero} is controlled
jointly with $\mathcal E_1+\mathcal E_2$ through the closed
system \eqref{ho:joint-energy}--\eqref{ho:affine-energy}.

\medskip
\noindent
\textbf{Step 4: The third-order density estimate.}

Set
\[
Z_3=\sum_{|\alpha|=3}\|\partial^\alpha\rho\|_{L^{2}}^2,
\qquad
Q_4=\sum_{|\alpha|=3}\|\nabla\partial^\alpha\rho\|_{L^{2}}^2.
\]
Applying $\partial^\alpha$ to the density equation in
\eqref{ho:system}, testing with $\partial^\alpha\rho$, and integrating by parts over
$\mathbb R^N$, we obtain, after summing over $|\alpha|=3$,
\begin{equation}\label{ho:density-third-identity}
\frac12Z_3'+Q_4
=\sum_{|\alpha|=3}
 \int
 \partial^\alpha(\rho v)
 \cdot\nabla\partial^\alpha\rho\,dx.
\end{equation}
The third-order product estimate in \eqref{ho:product} shows that
\[
\|\nabla^3(\rho v)\|_{L^{2}}
\le C\bigl(\|\nabla^3\rho\|_{L^{2}}+\|\nabla^3v\|_{L^{2}}\bigr).
\]
Therefore, for a fixed $C_1\ge1$,
\begin{equation}\label{ho:density-third}
Z_3'+Q_4\le C_1\bigl(Z_3+\mathcal D_2\bigr).
\end{equation}
Here we used $\|\nabla^3v\|_{L^{2}}^2\le C\mathcal D_2$,
which follows directly from the definition of $\mathcal D_2$.

The zeroth-order density estimate is obtained by testing its
equation with $\rho$:
\[
\frac12\frac{d}{dt}\|\rho\|_{L^{2}}^2
+\|\nabla\rho\|_{L^{2}}^2
=\int\rho v\cdot\nabla\rho\,dx
\le\frac12\|\nabla\rho\|_{L^{2}}^2+C\|\rho\|_{L^{2}}^2.
\]
Hence
\begin{equation}\label{ho:density-zero}
\frac{d}{dt}\|\rho\|_{L^{2}}^2
+\|\nabla\rho\|_{L^{2}}^2
\le C\|\rho\|_{L^{2}}^2.
\end{equation}

For the effective velocity, we estimate $r=v-v_0$.
By $\eqref{ho:system}_2$
and \eqref{ho:pressure-zero},
\[
\|v_t\|_{L^{2}}^2
\le C\bigl(
 \|\nabla^2v\|_{L^{2}}^2+\|\nabla v\|_{L^{2}}^2
 +Q+P_0
 \bigr)
\le C\bigl(
 \mathcal E_1+\mathcal E_2+Q+P_0
 \bigr).
\]
Since $r_t=v_t$, we obtain
\begin{equation}\label{ho:velocity-zero}
J'=2\int r\cdot v_t\,dx
\le C\bigl(
 J+\mathcal E_1+\mathcal E_2+Q+P_0
 \bigr).
\end{equation}

\medskip
\noindent
\textbf{Step 5: Closing the energy estimate.}

Choose $A_*=2C_1+1$ and define
\begin{align*}
\mathcal E
&=1+P_0+A_*(\mathcal E_1+\mathcal E_2)
  +J+Q+\|\rho\|_{L^{2}}^2+Z_3,\\
\mathcal D
&=\frac{A_*}{2}(\mathcal D_1+\mathcal D_2)
  +\|\nabla\rho\|_{L^{2}}^2+Q_4.
\end{align*}
Adding $A_*$ times \eqref{ho:joint-energy} to
\eqref{ho:affine-energy}, \eqref{ho:density-third},
\eqref{ho:density-zero} and \eqref{ho:velocity-zero},
we absorb $C_1\mathcal D_2$ into the left-hand side and obtain
\begin{equation}\label{ho:closed-energy}
\mathcal E'+\mathcal D\le C\mathcal E.
\end{equation}
The constants in this inequality do not involve higher-order norms.
Since $J(0)=Q(0)=0$, the initial energy is finite under
\eqref{ho:initial-data}.
Gronwall's inequality gives
\begin{equation}\label{ho:gronwall}
\sup_{0\le t\le S}\mathcal E(t)
+\int_0^S\mathcal D(t)\,\mathrm dt
\le 2e^{CT}\mathcal E(0).
\end{equation}

The energy controls
\[
\|\rho\|_{H^3}^2
+\|\nabla v\|_{H^1}^2
+\|\nabla g\|_{H^1}^2+J+Q.
\]
Moreover,
\begin{align*}
\|v-v_0\|_{H^2}^2
&\le C\bigl(
 J+\|\nabla v\|_{H^1}^2+\|\nabla v_0\|_{H^1}^2
 \bigr),\\
\|g-g_0\|_{H^2}^2
&\le C\bigl(
 Q+\|\nabla g\|_{H^1}^2+\|\nabla g_0\|_{H^1}^2
 \bigr),
\end{align*}
and \eqref{ho:pressure-zero} controls $\|\mathfrak p\|_{L^{2}}^2$.

The dissipation controls the highest-order derivatives in
$\|\rho\|_{H^4}$, $\|\nabla v\|_{H^2}$ and
$\|\nabla g\|_{H^2}$.
The lower-order parts of these time-integrated norms are bounded by
$T\sup_{t\le S}\mathcal E(t)$.
In particular,
\[
\int_0^S
\bigl(\|\nabla v\|_{L^{2}}^2+\|\nabla g\|_{L^{2}}^2\bigr)\,\mathrm dt
\le CT\sup_{0\le t\le S}\mathcal E(t).
\]

Finally, since
\[
u=u_0+r-z,\qquad \nabla u=\nabla v-\nabla g,
\]
we have
\[
\sup_{0\le t\le S}\|u(t)\|_{H^2}^2
\le C\left(
 \|u_0\|_{H^2}^2
 +\sup_{0\le t\le S}
  \bigl(\|r(t)\|_{H^2}^2+\|z(t)\|_{H^2}^2\bigr)
 \right)
\le C.
\]
Also,
\begin{align*}
\int_0^S\|u\|_{H^3}^2\,\mathrm dt
&\le C\int_0^S
 \bigl(
 \|u\|_{L^{2}}^2+\|\nabla v\|_{H^2}^2
          +\|\nabla g\|_{H^2}^2
 \bigr)\,\mathrm dt\\
&\le C.
\end{align*}
This proves \eqref{ho:main-estimate}.

\medskip
\noindent
\textbf{Step 6: Estimates for the time derivatives.}

From \eqref{ho:system}, we have
\begin{align*}
\|v_t\|_{L^{2}}
&\le C\bigl(
 \|\nabla^2v\|_{L^{2}}+\|\nabla v\|_{L^{2}}+\|\mathfrak p\|_{L^{2}}
 \bigr),\\
\|g_t\|_{L^{2}}
&\le C\bigl(
 \|\nabla^2g\|_{L^{2}}+\|\nabla^2v\|_{L^{2}}
 +\|\nabla g\|_{L^{2}}+\|\nabla v\|_{L^{2}}
 \bigr).
\end{align*}
Thus $v_t$ and $g_t$ are uniformly bounded in $L^2$.
Taking one spatial derivative of their equations and using
\eqref{ho:product}--\eqref{ho:transport-product}, we obtain
\begin{align*}
\|\nabla v_t\|_{L^{2}}
&\le C\bigl(
 \|\nabla^3v\|_{L^{2}}+\|\nabla^2v\|_{L^{2}}+\|\nabla^2g\|_{L^{2}}
 +\|\nabla\mathfrak p\|_{L^{2}}
 \bigr),\\
\|\nabla g_t\|_{L^{2}}
&\le C\bigl(
 \|\nabla^3g\|_{L^{2}}+\|\nabla^3v\|_{L^{2}}
 +\|\nabla^2g\|_{L^{2}}+\|\nabla^2v\|_{L^{2}}
 \bigr).
\end{align*}
The right-hand sides are square integrable in time by
\eqref{ho:pressure-derivatives} and \eqref{ho:main-estimate}.
Since $u_t=v_t-g_t$, the same bounds give
\[
\sup_{0\le t\le S}\|u_t(t)\|_{L^{2}}^2
+\int_0^S\|u_t(t)\|_{H^1}^2\,\mathrm dt
\le C.
\]

The density equation gives
\begin{align*}
\|\rho_t\|_{H^1}
&\le C\bigl(
 \|\rho\|_{H^3}+\|\nabla v\|_{H^1}
 \bigr),\\
\|\rho_t\|_{H^2}
&\le C\bigl(
 \|\rho\|_{H^4}+\|\nabla v\|_{H^2}
 \bigr).
\end{align*}
These follow by applying \eqref{ho:product} to
$\operatorname{div}(\rho v)$.
Combining the preceding bounds proves \eqref{ho:time-estimate}.

It remains to verify the time continuity in
\eqref{ho:time-continuity}.
Since
\[
\rho\in L^2(0,S;H^4),
\qquad
\rho_t\in L^2(0,S;H^2),
\]
the standard Sobolev time-continuity theorem yields
\[
\rho\in C([0,S];H^3).
\]
Similarly,
\[
\nabla v,\nabla g\in L^2(0,S;H^2),
\qquad
\nabla v_t,\nabla g_t\in L^2(0,S;L^2)
\]
imply
\[
\nabla v,\nabla g\in C([0,S];H^1).
\]
Since $r_t=v_t$ and $z_t=g_t$, we also have
\[
r,z\in H^1(0,S;L^2)\subset C([0,S];L^2).
\]
Combining this with
\[
\nabla r=\nabla v-\nabla v_0,
\qquad
\nabla z=\nabla g-\nabla g_0,
\]
we conclude that
\[
r,z\in C([0,S];H^2).
\]
The identity $u=u_0+r-z$ then gives
$u\in C([0,S];H^2)$.
This completes the proof.
\end{proof}

The density estimates above are obtained directly from its equation.
The factor $\rho^{\gamma-1}$ appears only in the pressure term.
No separate evolution equation or higher-order energy estimate for $\rho^{\gamma-1}$ is needed.
\subsection{Higher-order estimates when $\gamma=1$}

For the isothermal pressure, the estimates for the density and for
the spatial derivatives of $v$ and $g$ remain valid. The zeroth-order
velocity estimate must be taken in the affine class of
Theorem~\ref{thm:global-strong-isothermal}.

\begin{proposition}[Higher-order estimates when $\gamma=1$]
\label{prop:higher-order-isothermal}
Let $N=2,3$, and let $(\rho,v)$ be the positive local strong solution
with initial data satisfying \eqref{eq:isothermal-data}.
Suppose that \eqref{ho:known-bounds} holds on $[0,S]$, with its
constants independent of $S<\min\{T,T_*\}$. Set
\[
r=v-v_0+t g_0,\qquad z=g-g_0.
\]
Then
\begin{equation}\label{ho:iso-main-estimate}
\begin{aligned}
&\sup_{0\le t\le S}\Bigl(
 \|\rho(t)\|_{H^3}^2+\|r(t)\|_{H^2}^2+\|z(t)\|_{H^2}^2
 +\|u(t)+t g_0\|_{H^2}^2\\
&\quad
 +\|\nabla v(t)\|_{H^1}^2+\|\nabla g(t)\|_{H^1}^2
 \Bigr)\\
&\quad+\int_0^S\Bigl(
 \|\rho\|_{H^4}^2+\|\nabla v\|_{H^2}^2
 +\|\nabla g\|_{H^2}^2+\|\nabla u\|_{H^2}^2
 \Bigr)\,\mathrm dt\le C,
\end{aligned}
\end{equation}
and
\begin{equation}\label{ho:iso-time-estimate}
\begin{aligned}
&\sup_{0\le t\le S}\Bigl(
 \|\rho_t(t)\|_{H^1}^2+\|v_t(t)+g_0\|_{L^{2}}^2
 +\|g_t(t)\|_{L^{2}}^2+\|u_t(t)+g_0\|_{L^{2}}^2
 \Bigr)\\
&\quad+\int_0^S\Bigl(
 \|\rho_t\|_{H^2}^2+\|v_t+g_0\|_{H^1}^2
 +\|g_t\|_{H^1}^2+\|u_t+g_0\|_{H^1}^2
 \Bigr)\,\mathrm dt\le C.
\end{aligned}
\end{equation}
Here $C$ depends only on $N,T,R,V,G_\infty$ and the initial
norms in \eqref{eq:isothermal-data}, and is independent of $S$.
Moreover, all the time-continuity assertions in
\eqref{eq:isothermal-regularity} hold on $[0,S]$.
\end{proposition}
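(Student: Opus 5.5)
The plan is to repeat the proof of Proposition~\ref{higherordergamma>1} with $\gamma=1$, so $\mathfrak p=g$. The only change is at zeroth order for $v$, where $g_0$ is not assumed square integrable. All derivative estimates carry over unchanged. For $\gamma=1$ the pressure identities \eqref{ho:pressure-first}--\eqref{ho:pressure-second} reduce to $\nabla\mathfrak p=\nabla g$ and $\nabla^2\mathfrak p=\nabla^2 g$. Hence the bound in $I_2$ becomes $|I_2|\le C(\mathcal E_j+\mathcal D_j^{1/2}\mathcal E_{j}^{1/2})$, with no $Q$ or $P_0$ term. The Gagliardo--Nirenberg and product estimates \eqref{ho:interpolation-two}--\eqref{ho:transport-product} only use the $L^\infty$ bounds \eqref{ho:known-bounds} for $\rho,v,g$. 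So \eqref{ho:joint-energy}, \eqref{ho:affine-energy}, \eqref{ho:density-third} and \eqref{ho:density-zero} hold verbatim with $P_0$ dropped. Here $Q=\|g-g_0\|_{L^2}^2$ as before, and $Q(0)=0$.

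The new ingredient is the affine zeroth-order estimate for $r=v-v_0+tg_0$. From $\eqref{ho:system}_2$ with $\mathfrak p=g$,
\[
r_t=v_t+g_0=\Delta v-(v-2g)\cdot\nabla v-(g-g_0),
\]
so that
\[
\|r_t\|_{L^2}\le \|\nabla^2 v\|_{L^2}+C\|\nabla v\|_{L^2}+Q^{1/2}.
\]
This uses only the $L^\infty$ bounds for $v,g$. Setting $J=\|r\|_{L^2}^2$, with $J(0)=0$, this gives $J'\le C(J+\mathcal E_1+\mathcal E_2+Q)$, which replaces \eqref{ho:velocity-zero}. Next form $\mathcal E$ and $\mathcal D$ exactly as in Step~5, with $P_0$ removed, and apply Gronwall. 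This yields the bounds on $\|\rho\|_{H^3}$, $\nabla v$, $\nabla g$ in $L^\infty H^1\cap L^2H^2$, and on $J$ and $Q$. The initial energy is finite by \eqref{eq:isothermal-data}, because $\nabla v_0=\nabla u_0+\nabla g_0\in H^1$.

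The remaining quantities are recovered as follows. Since $\nabla r=\nabla v-\nabla v_0+t\nabla g_0$ and $\nabla g_0\in H^1$, and $t\le T$, we get $\|r\|_{H^2}\le C$, and similarly $\|z\|_{H^2}\le C$. For the velocity, $u+tg_0=v-g+tg_0=u_0+r-z$, so $\|u+tg_0\|_{H^2}\le C$. Moreover $\nabla u=\nabla v-\nabla g\in L^2H^2$. Note that we do not claim $u+tg_0\in L^2H^3$; only $\nabla u$ is placed in $L^2H^2$, consistent with Remark~\ref{rem:affine-space}.

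For the time derivatives, $v_t+g_0=r_t$ has just been bounded in $L^2$. Its gradient is $\nabla v_t+\nabla g_0$. Here $\nabla v_t$ is estimated by differentiating the equation as in Step~6, and $\nabla g_0\in L^2$ is bounded on $[0,T]$. The terms $g_t$ and $\rho_t$ are treated as before. For $u_t+g_0=(v_t+g_0)-g_t$, the bounds follow by subtraction.

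Time continuity is obtained exactly as in Step~6. Since $r,z\in H^1(0,S;L^2)$ and $\nabla r,\nabla z\in C([0,S];H^1)$, we get $r,z\in C([0,S];H^2)$. The $C([0,S];H^1)$ and $C([0,S];L^2)$ continuity of $\rho_t$, $v_t+g_0$ and $g_t$ then follows from the equations and the continuity of the spatial norms.

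I expect the main obstacle to be keeping the zeroth-order terms honest. Every place where the $\gamma>1$ proof used $\|\mathfrak p\|_{L^2}$ or $\|v_t\|_{L^2}$ must be rewritten in terms of $g-g_0$ and $v_t+g_0$, so that no $\|g_0\|_{L^2}$ appears. The derivative estimates only see $\nabla g_0\in H^1$ and are harmless.
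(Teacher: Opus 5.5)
Your proposal is correct and follows essentially the same route as the paper. You take $\mathfrak p=g$ and reuse the derivative energy estimates; the pressure term in $I_2$ is in fact bounded directly by $C\mathcal E_j$. You then close the zeroth-order estimate for $J=\|r\|_{L^2}^2$ and $Q=\|z\|_{L^2}^2$ through $r_t=\Delta v-(v-2g)\cdot\nabla v-z$, and recover the affine norms, the time-derivative bounds and the time continuity from $\nabla r=\nabla v-\nabla v_0+t\nabla g_0$ and $u+tg_0=u_0+r-z$.

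The differences are cosmetic. You keep $Q$ in the joint derivative inequality, and you bound $\nabla v_t$ before adding $\nabla g_0$, whereas the paper differentiates the shifted $r$-equation directly; neither affects the argument.
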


\begin{proof}
The system \eqref{ho:system} holds with $\mathfrak p=g$.
We retain the energies $\mathcal E_j,\mathcal D_j$ in
\eqref{ho:energies}. For $j=1,2$, the pressure term now satisfies
\[
\left|\lambda_0\sum_{|\alpha|=j}
 \int\partial^\alpha g\cdot\partial^\alpha v\,dx\right|
\le C\mathcal E_j.
\]
All the other terms in \eqref{ho:energy-identity} are unchanged.
Thus the same product estimates give
\begin{equation}\label{ho:iso-joint-energy}
(\mathcal E_1+\mathcal E_2)'
+\mathcal D_1+\mathcal D_2
\le C(\mathcal E_1+\mathcal E_2).
\end{equation}
The estimates \eqref{ho:density-third} and
\eqref{ho:density-zero} also remain valid.

We also need to estimate the $L^2$ norms of $r$ and $z$. Since $r_t=v_t+g_0$, $z_t=g_t$, and $g=g_0+z$, the equations for $v$ and $g$ become
\begin{equation}\label{ho:iso-shifted-equations}
\begin{aligned}
r_t=v_t+g_0
 &=\Delta v-(v-2g)\cdot\nabla v-z,\\
z_t=g_t
 &=\Delta g-\nabla\operatorname{div}v+\nabla\Phi.
\end{aligned}
\end{equation}
Writing $J=\|r\|_{L^{2}}^2$ and $Q=\|z\|_{L^{2}}^2$, we obtain
\[
\|r_t\|_{L^{2}}^2\le C(\mathcal E_1+\mathcal E_2+Q),
\qquad
\|z_t\|_{L^{2}}^2\le C(\mathcal E_1+\mathcal E_2),
\]
and consequently
\begin{equation}\label{ho:iso-zero-energy}
(J+Q)'\le C(J+Q+\mathcal E_1+\mathcal E_2).
\end{equation}
In particular, no $L^2$ norm of $g$ or $g_0$ occurs.

Take $A_*=2C_1+1$ as in Step~5 above and set
\[
\mathcal E_{\rm iso}
=1+A_*(\mathcal E_1+\mathcal E_2)
 +J+Q+\|\rho\|_{L^{2}}^2+Z_3.
\]
Adding \eqref{ho:iso-joint-energy},
\eqref{ho:iso-zero-energy}, \eqref{ho:density-third}
and \eqref{ho:density-zero}, with the same weights as before,
gives
\[
\mathcal E_{\rm iso}'+\mathcal D\le C\mathcal E_{\rm iso}.
\]
Gronwall's inequality yields the analogue of \eqref{ho:gronwall}.
The affine norms are recovered from
\[
\nabla r=\nabla v-\nabla v_0+t\nabla g_0,
\qquad
\nabla z=\nabla g-\nabla g_0,
\qquad
u+t g_0=u_0+r-z.
\]
Since $t\le T$ and $\nabla g_0,\nabla v_0\in H^1$, these
identities give the supremum bounds in
\eqref{ho:iso-main-estimate}. The integrated bounds follow
from the same dissipation and from $\nabla u=\nabla v-\nabla g$.

We next establish the time-derivative estimates. Taking a spatial derivative in \eqref{ho:iso-shifted-equations}
and using \eqref{ho:transport-product} gives
\[
\begin{aligned}
\|\nabla r_t\|_{L^{2}}
&\le C\bigl(
 \|\nabla^3v\|_{L^{2}}+\|\nabla^2v\|_{L^{2}}+\|\nabla^2g\|_{L^{2}}
 +\|\nabla g\|_{L^{2}}+\|\nabla g_0\|_{L^{2}}\bigr),\\
\|\nabla z_t\|_{L^{2}}
&\le C\bigl(
 \|\nabla^3g\|_{L^{2}}+\|\nabla^3v\|_{L^{2}}
 +\|\nabla^2g\|_{L^{2}}+\|\nabla^2v\|_{L^{2}}\bigr).
\end{aligned}
\]
These quantities are square integrable in time. Since
$u_t+g_0=r_t-z_t$, the time-derivative bounds follow exactly
as in Step~6, including the unchanged estimates for $\rho_t$.

For time continuity, observe that
\[
\partial_t\nabla v=\nabla r_t-\nabla g_0,
\qquad
\partial_t\nabla g=\nabla z_t.
\]
The Sobolev time-continuity theorem therefore gives
$\nabla v,\nabla g\in C([0,S];H^1)$, while
$r,z\in H^1(0,S;L^2)$. The preceding affine identities imply
$r,z,u+t g_0\in C([0,S];H^2)$ and
$\nabla u\in C([0,S];H^1)$. They also give
$v,g\in C([0,S];L^\infty)$ by $H^2\hookrightarrow L^\infty$.
The right-hand sides of \eqref{ho:iso-shifted-equations}
are consequently continuous into $L^2$.
Finally, $\rho\in C([0,S];H^3)$ follows as before, and
the density equation gives $\rho_t\in C([0,S];H^1)$.
This proves all the stated continuity assertions.
\end{proof}
\section{Proof of the main theorems}
\label{sec:continuation}

In this section, we prove Theorems~\ref{thm:global_strong_solution}
and \ref{thm:global-strong-isothermal} by a continuation argument.

\begin{proof}[Proof of Theorems~\ref{thm:global_strong_solution}
and \ref{thm:global-strong-isothermal}]
Let $T_*>0$ be the maximal existence time of the positive local
solution given by Lemma~\ref{lem: loc sol for gamma>1} when $\gamma>1$
and by Lemma~\ref{lem:local-isothermal} when $\gamma=1$.
Suppose, by contradiction, that $T_*<\infty$, and fix $T>T_*$.
We apply the preceding estimates on $[0,S]$, where $S<T_*$.
The constants depend on $T$ and the initial data, but not on $S$.

For $\gamma>1$, Lemma~\ref{lem:density-upper-bound}
and the effective velocity estimate of
Section~\ref{effvelocsec} give uniform bounds for $\rho$ and $v$.
For $\gamma=1$, the bound for $v$ follows from
Proposition~\ref{iso-v:proposition}.
Subsection \ref{sec: relative-density bounds independent of S} then gives the relative-density bounds with constants independent of $S$ in both cases. Together with
\eqref{en:final-log-bound}, these yield
\begin{equation}\label{cont:pointwise}
C_T^{-1}\rho_0(x)\le\rho(t,x)\le C_T\rho_0(x),
\qquad
\|\rho(t) \|_{L^{\infty}}+\|v(t) \|_{L^{\infty}}+\|g(t) \|_{L^{\infty}}
\le C_T
\end{equation}
for $0\le t<T_*$.
Propositions~\ref{higherordergamma>1}
and \ref{prop:higher-order-isothermal} give the higher-order
estimates in the two cases. Letting $S\uparrow T_*$ and using
monotone convergence theorem, we obtain these estimates on $[0,T_*)$.

\medskip
\noindent\textbf{Step 1: Limits as $t\uparrow T_*$.}

The preceding estimates imply
\[
\rho\in L^2(0,T_*;H^4),\qquad
\rho_t\in L^2(0,T_*;H^2).
\]
Hence by Lions-Magenes theorem, $\rho$ has a representative in $C([0,T_*];H^3)$.
In both cases,
\[
\nabla v,\nabla g\in L^2(0,T_*;H^2),\qquad
\partial_t\nabla v,\partial_t\nabla g
\in L^2(0,T_*;L^2).
\]
For $\gamma=1$, the estimate for $\partial_t\nabla v$ follows from
\[
\partial_t\nabla v=\nabla(v_t+g_0)-\nabla g_0,
\]
and $\nabla g_0\in L^2$.
Applying the same time-continuity theorem, we obtain
\begin{equation}\label{cont:derivative-traces}
\nabla v,\nabla g\in C([0,T_*];H^1).
\end{equation}

Set
\[
z=g-g_0,\qquad
r=\begin{cases}
v-v_0,&\gamma>1,\\
v-v_0+t g_0,&\gamma=1.
\end{cases}
\]
The estimates for $r_t$ and $z_t$ imply
$r,z\in H^1(0,T_*;L^2)$. Thus $r$ and $z$ have limits in $L^2$
as $t\uparrow T_*$. Together with
\eqref{cont:derivative-traces}, this gives
\begin{equation}\label{cont:affine-traces}
r,z\in C([0,T_*];H^2).
\end{equation}
Define
\[
\rho_*:=\rho(T_*),\qquad g_*:=g_0+z(T_*),\qquad
v_*:=\begin{cases}
v_0+r(T_*),&\gamma>1,\\
v_0-T_*g_0+r(T_*),&\gamma=1.
\end{cases}
\]
Thus $v_*,g_*\in\mathcal B$ and $\rho_*\in H^3$.
The convergence above and \eqref{cont:pointwise} give
\[
\rho_*(x)\ge C_T^{-1}\rho_0(x)>0,
\qquad g_*=\nabla\log\rho_*.
\]
The identity for $g_*$ follows by passing to the limit in
$\nabla\rho=\rho g$ using $H^3\hookrightarrow C^1$
and the uniform convergence of $g$.
When $\gamma>1$, we also have
\[
u_*:=v_*-g_*\in H^2,\qquad
\rho_*\in L^1,\qquad
\mathfrak p_*:=\gamma\rho_*^{\gamma-1}g_*\in L^2.
\]
The inequality $\rho_*\le C_T\rho_0$ gives $\rho_*\in L^1$.
The uniform $L^2$ bound for $\mathfrak p$ and Fatou's lemma give
$\mathfrak p_*\in L^2$.
In fact, \eqref{ho:pressure-decomposition} gives
$\mathfrak p(t)\to\mathfrak p_*$ strongly in $L^2$:
the first term converges by the uniform convergence of
$\rho^{\gamma-1}$ and the $L^2$ convergence of $z$, while
dominated convergence applies to the second term, using
$\rho(t)/\rho_0\le C_T$ and $\mathfrak p_0\in L^2$.
Thus $(\rho_*,u_*)$ satisfies the assumptions of
Lemma~\ref{lem: loc sol for gamma>1} when $\gamma>1$, and
$(\rho_*,v_*,g_*)$ satisfies those of
Lemma~\ref{lem:local-isothermal} when $\gamma=1$.

\medskip
\noindent\textbf{Step 2: Continuation beyond $T_*$.}

Applying the local existence result at $t=T_*$, we obtain a
positive solution on $[T_*,T_*+\tau]$ for some $\tau>0$.
It agrees with the limiting values of the original solution
at $T_*$. We can therefore join the two solutions, and the
equations hold in distributions on $(0,T_*+\tau)$.
The continuity of the time derivatives stated in the theorems
follows from the equations and the continuity of their
right-hand sides.

For $\gamma>1$, we have $v-v_*,g-g_*\in C H^2$ on the new
interval. Since $v_*-v_0,g_*-g_0\in H^2$, it follows that
$v-v_0,g-g_0,u\in C H^2$ on the extended interval.
For $\gamma=1$, we apply Lemma~\ref{lem:local-isothermal} with
initial data $v_*,g_*\in\mathcal B$. This lemma does not require
$u_*\in H^2$. Set
\[
r_*(t)=v(t)-v_*+(t-T_*)g_*,\qquad
z_*(t)=g(t)-g_*.
\]
Then $r_*,z_*\in C H^2$, and
\begin{equation}\label{cont:affine-restart}
r(t)=r_*(t)+r(T_*)-(t-T_*)z(T_*),\qquad
z(t)=z_*(t)+z(T_*).
\end{equation}
Since $r(T_*),z(T_*)\in H^2$, these identities give
$r,z\in C H^2$ on the extended interval. Moreover,
\[
v_t+g_0=(v_t+g_*)-z(T_*),
\]
which gives the required regularity of $v_t+g_0$.
The other time-derivative estimates follow in the same way.
The local comparison of $\rho$ with $\rho_*$, together with
\eqref{cont:pointwise}, gives the comparison with $\rho_0$
on the extended interval.
Thus the solution extends beyond $T_*$ with the regularity
stated in the corresponding theorem. This contradicts the
maximality of $T_*$, and hence $T_*=\infty$.

\medskip
\noindent\textbf{Step 3: Uniqueness and far-field behavior.}

For any finite $T>0$, the preceding estimates give the pointwise
bounds and regularity stated in the two theorems. Applying
local uniqueness successively on short time intervals gives
uniqueness on $[0,T]$.

It remains to verify the uniformity of the far-field limits.
Since $H^2(\mathbb R^N)\hookrightarrow C_0(\mathbb R^N)$
for $N=2,3$, the set $\{f(t):0\le t\le T\}$ is compact in
$C_0$ whenever $f\in C([0,T];H^2)$. Covering this set by
finitely many balls in the supremum norm gives
\[
\lim_{R\to\infty}\sup_{\substack{0\le t\le T\\ |x|\ge R}}
|f(t,x)|=0
\qquad\text{for }f\in C([0,T];H^2).
\]
This applies to $\rho\in C([0,T];H^3)$. And when $\gamma>1$, the same conclusion holds for $u\in C([0,T];H^2)$. When $\gamma=1$, we have $u_0\in H^2\subset C_0$ and $g_0\in C_0$ by \eqref{eq:g0-farfield}, and hence $v_0=u_0+g_0\in C_0$. Also, $r=v-v_0+tg_0$ and $z=g-g_0$ belong to $C([0,T];H^2)$, so both tend to zero at infinity uniformly in $t$. The same is true of $tg_0$, since $\sup_{0\le t\le T}|tg_0(x)|\le T|g_0(x)|.$ 
 Finally,
\[
g=g_0+z,\qquad v=v_0-tg_0+r,
\qquad u=-tg_0+(u_0+r-z),
\]
 Each term on the right tends to zero at infinity uniformly in $t\in[0,T]$. Thus $u,v,g$ have the asserted uniform far-field limits. 
The proof is complete.
\end{proof}
\section{Appendix} \label{Appendix7}

\subsection{Auxiliary lemmas}
\begin{lemma}[Product Estimates]\label{lem: Product Estimates}
    Let $N=2,3$, and let $\mathcal{B}$ be defined by \eqref{space: BUC and nabla H1}. Then there exists a constant $C>0$ such that:\\
    (i) If $a$ is a scalar function with $a\in\mathcal B$, and $f\in H^2$ is either a scalar function or a vector field, then
\begin{equation}\label{est: product est af 1}
    \|af\|_{H^2}\le C\|a\|_{\mathcal B}\|f\|_{H^2}.
\end{equation}
Therefore, if $a$ and $f$ are both vector fields belonging to $\mathcal B$ and $H^2$, respectively, applying \eqref{est: product est af 1} component-wise yields
\begin{equation}\label{est: product est af 2}
    \|a\cdot f\|_{H^2}\le C\|a\|_{\mathcal B}\|f\|_{H^2}. 
\end{equation}
    (ii) If $a,b\in\mathcal{B}$ are both vector fields, then 
    \begin{equation}
        \|(a\cdot\nabla )b\|_{H^1}+\|\nabla(a\cdot b)\|_{H^1}\le C\|a\|_{\mathcal{B}}\|b\|_{\mathcal{B}}.
    \end{equation}
    (iii) Let $F(v,g):=|g|^2-v\cdot g$. If $v_1,v_2,g_1,g_2\in\mathcal{B}$ and $$\|v_i\|_{\mathcal B}+\|g_i\|_{\mathcal B}\le M \quad(i=1,2), \quad v_1-v_2,\ g_1-g_2\in H^2,$$ then 
    \begin{equation}
        \|F(v_1,g_1)-F(v_2,g_2)\|_{H^2}\le C_M(\|v_1-v_2\|_{H^2}+\|g_1-g_2\|_{H^2}).
    \end{equation}
(iv) If $R\in H^3$ is a scalar function, $a\in L^\infty$ is a vector field with $\nabla a\in H^2$, then
\begin{equation}
  \|aR\|_{H^3}\le C(\|a \|_{L^{\infty}}+\|\nabla a\|_{H^2})\|R\|_{H^3}.  
\end{equation}    
\end{lemma}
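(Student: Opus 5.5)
The plan is to prove the four parts of Lemma~\ref{lem: Product Estimates} by expanding derivatives with the Leibniz rule and estimating each term by H\"older's inequality and Sobolev embeddings valid for $N=2,3$. The embeddings used are
\[
H^1\hookrightarrow L^4\cap L^6,\qquad H^2\hookrightarrow L^\infty .
\]
The structural point throughout is that a function $a\in\mathcal B$ is only known to be in $L^\infty$, while its derivatives satisfy $\nabla a\in H^1\subset L^2\cap L^4\cap L^6$. Every term must therefore be arranged so that either $a$ appears undifferentiated and is estimated in $L^\infty$, or at least one derivative falls on $a$ and the $H^1$ control of $\nabla a$ is used. No $L^2$ norm of $a$ itself may ever appear.

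For (i), write $\|af\|_{H^2}$ through $af$, $\nabla(af)$ and $\nabla^2(af)$.
\begin{itemize}
\item The zeroth-order term is $\|a\|_{L^\infty}\|f\|_{L^2}$.
\item The first-order term is $\nabla a\,f+a\nabla f$. It is bounded by $\|\nabla a\|_{L^2}\|f\|_{L^\infty}+\|a\|_{L^\infty}\|\nabla f\|_{L^2}$, using $\|f\|_{L^\infty}\lesssim\|f\|_{H^2}$.
\item The second-order terms are $\nabla^2a\,f$, $\nabla a\,\nabla f$ and $a\nabla^2f$. These are controlled by $\|\nabla^2 a\|_{L^2}\|f\|_{L^\infty}$, by $\|\nabla a\|_{L^4}\|\nabla f\|_{L^4}$, and by $\|a\|_{L^\infty}\|\nabla^2 f\|_{L^2}$, respectively.
\end{itemize}
Here $\|\nabla a\|_{L^4}\lesssim\|\nabla a\|_{H^1}$ and $\|\nabla f\|_{L^4}\lesssim\|f\|_{H^2}$. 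The vector version \eqref{est: product est af 2} follows by summing over components.

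For (ii), I will show $(a\cdot\nabla)b\in H^1$.
\begin{itemize}
\item In $L^2$: $\|a\|_{L^\infty}\|\nabla b\|_{L^2}$.
\item Its gradient consists of $\nabla a\,\nabla b$ and $a\nabla^2 b$, estimated in $L^2$ by $\|\nabla a\|_{L^4}\|\nabla b\|_{L^4}$ and by $\|a\|_{L^\infty}\|\nabla^2b\|_{L^2}$.
\end{itemize}
Since $\nabla(a\cdot b)=(\nabla a)^T b+(\nabla b)^T a$, the second quantity reduces to the same two types of terms.

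For (iii), expand the difference as
\[
F(v_1,g_1)-F(v_2,g_2)=(g_1+g_2)\cdot(g_1-g_2)-v_1\cdot(g_1-g_2)-(v_1-v_2)\cdot g_2 .
\]
Each term has the form $a\cdot f$ with $a\in\{g_1+g_2,v_1,g_2\}\subset\mathcal B$ of norm at most $2M$, and $f\in\{g_1-g_2,v_1-v_2\}\subset H^2$. Applying (i) then gives the bound with $C_M=CM$.

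Part (iv) is the only place where a third derivative occurs. Expanding $\nabla^3(aR)$ produces the following terms, each estimated in $L^2$:
\begin{itemize}
\item $a\nabla^3R$, by $\|a\|_{L^\infty}\|R\|_{H^3}$;
\item $\nabla a\,\nabla^2 R$, by $\|\nabla a\|_{L^\infty}\|\nabla^2R\|_{L^2}$, which is admissible because $\nabla a\in H^2\hookrightarrow L^\infty$;
\item $\nabla^2a\,\nabla R$, by $\|\nabla^2 a\|_{L^4}\|\nabla R\|_{L^4}$ with $\nabla^2a\in H^1$;
\item $\nabla^3 a\,R$, by $\|\nabla^3a\|_{L^2}\|R\|_{L^\infty}$.
\end{itemize}
The lower-order derivatives are handled in the same way.

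The main point needing care is to avoid ever putting $a$ itself into $L^2$. With the term assignments above this never happens, so I expect no real obstacle beyond bookkeeping.
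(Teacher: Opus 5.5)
Your proposal is correct. The paper states this lemma without proof, and your argument (Leibniz expansion, H\"older, and the embeddings $H^1\hookrightarrow L^4$, $H^2\hookrightarrow L^\infty$, keeping every undifferentiated $\mathcal B$-factor in $L^\infty$ and never in $L^2$) is the standard reasoning the paper relies on implicitly, as in its treatment of the mixed terms after \eqref{ho:transport-product}; the one point you leave unstated, the validity of the product rule when $a\notin L^2$, is routine by mollifying $a$ and passing to the limit.
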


We next state a linear parabolic lemma that will be used in the
construction of local solutions. The existence, uniqueness, and
regularity results below follow from basic energy estimates,
Gronwall's inequality, and standard approximation arguments.
We omit the proof.
\begin{lemma}[Linear Parabolic System]\label{lem: sol of linear eq}
    Let $T\le1$, $N=2,3$. Define
    \begin{equation}
        X_T:=\bigl\{Z=(z,w):\ Z\in C([0,T];H^2)\cap L^2(0,T;H^3),\ Z(0)=0\bigr\},
    \end{equation}
    equipped with the norm
   \begin{equation} \begin{aligned} \|(z,w)\|_{{X}_T}^2 &= \sup_{0\le t\le T}\big(4N^2\|z\|_{H^2}^2+\|w\|_{H^2}^2\big) + \int_0^T \big(4N^2\|\nabla z\|_{H^2}^2+\|\nabla w\|_{H^2}^2\big)\,dt. \end{aligned} \end{equation}
   Also define ${Y}_T=C([0,T];H^3)\cap L^2(0,T;H^4),$ equipped with the norm $$\|R\|_{Y_T}^2:=\sup_{0\le t\le T}\|R(t)\|_{H^3}^2+\int_0^T\|\nabla R\|_{H^3}^2\,dt.$$ Then:\\
   (i) For any $(f,k)\in L^2(0,T;H^1),$ \begin{equation}\label{eq:correction-linear-system}
       \begin{cases}
       z_t-\Delta z=f\\
       w_t-\Delta w+\nabla\mathrm{div}z=k\\
       (z,w)(0)=0
   \end{cases}
   \end{equation}
   has a unique solution $(z,w)\in X_T$, and 
   \begin{equation}
       \|(z,w)\|_{X_T}\le C\|(f,k)\|_{L^2(0,T;H^1)},
   \end{equation}
   where $C>0$ depends only on $N$.\\
   (ii) Let $a\in L^2(0,T;L^\infty)$, $\nabla a\in L^2(0,T;H^2)$, $b,f\in L^2(0,T;H^2)$, $J\in L^2(0,T;H^3)$, $R_0\in H^3$. Then\begin{equation}
       R_t-\Delta R+\mathrm{div}(aR)=bR+f-\mathrm{div}J,\qquad R(0)=R_0
   \end{equation}
   has a unique solution $R\in Y_T$, and
 \begin{equation}
     \|R\|_{Y_T}^2\le C\exp\Bigl(C\int_0^T\left(1+\|a \|_{L^{\infty}}^2+\|\nabla a\|_{H^2}^2+\|b\|_{H^2}^2\right)\,dt\Bigr)\Bigl(\|R_0\|_{H^3}^2+\|f\|_{L^2H^2}^2+\|J\|_{L^2H^3}^2\Bigr).
 \end{equation}
\end{lemma}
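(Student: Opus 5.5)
The plan is to treat the two parts of Lemma~\ref{lem: sol of linear eq} separately, each by a priori energy estimates at the $H^2$/$H^3$ level, then Galerkin (or Fourier) approximation and weak compactness for existence, with uniqueness from linearity and the same estimate applied to the difference.

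For part (i), the $z$-equation is a decoupled heat equation with source $f\in L^2(0,T;H^1)$ and zero data. Solve it first, e.g. explicitly via the heat semigroup or on the Fourier side. Apply $\partial^\alpha$ with $|\alpha|\le2$, test with $\partial^\alpha z$, and integrate by parts once on the source:
\[
\Bigl|\int\partial^\alpha f\,\partial^\alpha z\Bigr|\le\|\nabla^{|\alpha|-1}f\|_{L^2}\|\nabla^{|\alpha|+1}z\|_{L^2}.
\]
This gives $\sup\|z\|_{H^2}^2+\int\|\nabla z\|_{H^2}^2\le C\|f\|_{L^2H^1}^2$. The $|\alpha|=0$ case uses $\|f\|_{L^2}\|z\|_{L^2}$ together with Gronwall, which costs only a constant since $T\le1$.

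Next, treat $w$ as the solution of a heat equation with source $k-\nabla\operatorname{div}z$. The term $\nabla\operatorname{div}z$ involves three derivatives of $z$ at the top level, so do not estimate it separately. Instead, combine the two estimates with the weight $4N^2$ exactly as in Step~3 of Proposition~\ref{higherordergamma>1}. After one integration by parts,
\[
\Bigl|\int\partial^\alpha\nabla\operatorname{div}z\cdot\partial^\alpha w\Bigr|\le N\,\|\nabla\partial^\alpha z\|_{L^2}\|\nabla\partial^\alpha w\|_{L^2}\le N^2\|\nabla\partial^\alpha z\|_{L^2}^2+\tfrac14\|\nabla\partial^\alpha w\|_{L^2}^2 .
\]
The first term is absorbed by the $4N^2$-weighted $z$-dissipation. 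This closes the $X_T$ bound with $C=C(N)$. Existence follows from Galerkin approximation (or from solving explicitly in Fourier variables, since the system has constant coefficients), and continuity in $H^2$ from the Lions--Magenes theorem.

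For part (ii), apply $\partial^\alpha$ with $|\alpha|\le3$ and test with $\partial^\alpha R$. The diffusion yields $\|\nabla\partial^\alpha R\|_{L^2}^2$. Handle the convective term $\operatorname{div}(aR)$ and $\operatorname{div}J$ by moving the divergence onto $\partial^\alpha R$. The flux $aR$ is controlled by Lemma~\ref{lem: Product Estimates}(iv):
\[
\|aR\|_{H^3}\le C(\|a\|_{L^\infty}+\|\nabla a\|_{H^2})\|R\|_{H^3},
\]
and Young's inequality absorbs half the dissipation. The reaction term $bR$ with $b\in H^2$ can be bounded by $\|bR\|_{H^2}\|R\|_{H^4}$ after one integration by parts, using that $H^2$ is an algebra. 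The source $f$ is handled similarly, giving $\|f\|_{H^2}\|\nabla R\|_{H^3}$. Collecting terms,
\[
\tfrac{d}{dt}\|R\|_{H^3}^2+\|\nabla R\|_{H^3}^2\le C\bigl(1+\|a\|_{L^\infty}^2+\|\nabla a\|_{H^2}^2+\|b\|_{H^2}^2\bigr)\|R\|_{H^3}^2+C\bigl(\|f\|_{H^2}^2+\|J\|_{H^3}^2\bigr),
\]
and Gronwall gives the stated exponential bound. Existence again follows by mollifying the coefficients, solving, and passing to the limit using the uniform bound, with uniqueness by applying the estimate to the difference.

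I expect the main obstacle to be the rough coefficient $a$, which is only $L^\infty$ and not in $L^2$. Lemma~\ref{lem: Product Estimates}(iv) addresses this at the $H^3$ level, but the lowest-order term $\int aR\cdot\nabla R$ must also be checked: it is bounded by $\|a\|_{L^\infty}\|R\|_{L^2}\|\nabla R\|_{L^2}$, which is fine. The approximation step also needs care, since mollified $a$ must converge in $L^2(0,T;L^\infty)$ only weakly-$*$. I would therefore pass to the limit in the weak formulation using strong local $L^2$ convergence of $R$ from Aubin--Lions.
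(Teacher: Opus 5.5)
Your proposal is correct and follows the route the paper indicates: the paper omits this proof, citing basic energy estimates, Gronwall's inequality and standard approximation, and your $4N^2$-weighted absorption of $\nabla\operatorname{div}z$ is the same mechanism as the $I_4$ term in Step~3 of Proposition~\ref{higherordergamma>1}. One small simplification for the approximation step in (ii): since $\nabla a\in L^2(0,T;H^2)\subset L^2(0,T;L^\infty)$, spatial mollification satisfies $\|\eta_\varepsilon*a-a\|_{L^\infty}\le\varepsilon\|\nabla a\|_{L^\infty}$, so the mollified coefficients converge strongly in $L^2(0,T;L^\infty)$ and the weak-$*$ argument you were worried about is not needed.
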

\begin{remark}\label{lem: sol of zero source terms}
    The above conclusion also applies to nonzero $H^m$ initial data $(m=1,2)$. Specifically, let $(z_0,w_0)\in H^m(\mathbb{R}^N)\times H^m(\mathbb{R}^N)$, $m=1,2$. Consider 
    \begin{equation}
        \begin{cases}
z_t-\Delta z=0,\\
w_t-\Delta w+\nabla\operatorname{div}z=0,\\
(z,w)(0)=(z_0,w_0).
\end{cases}
    \end{equation}
    Then there exists a unique solution $(z,w)$ with $z,w\in C([0,T];H^m)\cap L^2(0,T;H^{m+1})$, and
\begin{equation}
    \sup_{t\le T}\bigl(\|z(t)\|_{H^m}^2+\|w(t)\|_{H^m}^2\bigr)
+\int_0^T\bigl(\|\nabla z\|_{H^m}^2+\|\nabla w\|_{H^m}^2\bigr)\,dt
\le C\bigl(\|z_0\|_{H^m}^2+\|w_0\|_{H^m}^2\bigr),
\end{equation}
where
$$
\|(z,w)\|_{H^m}^2:=\|z\|_{H^m}^2+\|w\|_{H^m}^2,
$$
and $C>0$ depends only on $N$ and is independent of $T.$

In addition, if a sequence of initial data $(z_0^{(n)},w_0^{(n)})$ converges in $H^m$ to $(z_0,w_0)$, then the corresponding solutions $(z^{(n)},w^{(n)})$ converge in $C([0,T];H^m)$ to $(z,w).$ 
\end{remark}

\subsection{Proof of Lemma \ref{lem: loc sol for gamma>1}}\label{subsec: proof of loc sol}

\begin{proof}
\textbf{Step 1: Rewrite the System}

For $\rho>0$, define
$$
v:=u+\nabla\log\rho,\quad g:=\nabla\log\rho,\quad h:=\log\rho,\quad \mathfrak{p}:=\gamma\rho^{\gamma-1}g,
$$
such that 
\begin{equation}\label{rhovghF}
    \begin{cases}
\rho_t-\Delta\rho=-\operatorname{div}(\rho v),\\
v_t-\Delta v=-(v-2g)\cdot\nabla v-\mathfrak{p},\\
g_t-\Delta g+\nabla\operatorname{div}v=\nabla F,\qquad F:=|g|^2-v\cdot g,\\
h_t-\Delta h=F-\operatorname{div}v,\qquad \nabla h=g.
\end{cases}
\end{equation}

\textbf{Step 2: Construction of reference fields}

For $\nabla v_0,\nabla g_0\in H^1$, consider the derivative system
\begin{equation}
    \begin{cases}
(A_k)_t-\Delta A_k=0,\\
(B_k)_t-\Delta B_k+\nabla\operatorname{div}A_k=0,\\
A_k(0)=\partial_k v_0,\\
B_k(0)=\partial_k g_0.
\end{cases}
\end{equation}
By Remark \ref{lem: sol of zero source terms},  there exists a solution $(A_k,B_k)\in C([0,T];H^1)\cap L^2(0,T;H^2)$ such that
\begin{equation}\label{est: AkBk}
    \sum_{k=1}^{N}\Bigl[\sup_{0\le t\le T}\bigl(\|A_k\|_{H^1}^2+\|B_k\|_{H^1}^2\bigr)
+\int_0^T\bigl(\|\nabla A_k\|_{H^1}^2+\|\nabla B_k\|_{H^1}^2\bigr)\,dt\Bigr]
\le C\bigl(\|\nabla v_0\|_{H^1}^2+\|\nabla g_0\|_{H^1}^2\bigr).
\end{equation}
One can verify that
\begin{equation}\label{eq: curl-free of AB}
    \partial_l A_k=\partial_k A_l,\qquad \partial_l B_k=\partial_k B_l.
\end{equation}
Indeed, for
$$
D_{kl}:=\partial_l A_k-\partial_k A_l,E_{kl}:= \partial_{l}B_{k}-\partial_{k}B_{l}.
$$
we have
$$
\begin{cases}
(D_{kl})_t-\Delta D_{kl}=0\\
D_{kl}(0)=\partial_{lk}v_0-\partial_{kl}v_0=0,
\end{cases}
\implies D_{kl}=0,
$$
and $$\begin{cases}
(E_{kl})_{t}-\Delta E_{kl}=-\nabla \text{div}\left(D_{kl}\right)=0 \\
E_{kl}(0)=0
\end{cases}\implies E_{kl}=0.$$ Recover $\bar v,\bar g$ by time integration. Let 
\begin{equation}
    \begin{aligned}
\bar v(t)&:=v_0+\int_0^t\sum_{k=1}^{N}\partial_k A_k(s)\,ds,\\
\bar g(t)&:=g_0+\int_0^t\Bigl[\sum_{k=1}^{N}\partial_k B_k(s)
-\nabla\Bigl(\sum_{j=1}^{N}(A_j)_j(s)\Bigr)\Bigr]\,ds.
\end{aligned}
\end{equation}
By \eqref{eq: curl-free of AB}, we have
\begin{equation}
    \begin{aligned}
\partial_l\bar v
&=\partial_l v_0+\int_0^t\sum_{k=1}^{N}\partial_l\partial_k A_k(s)\,ds\\
&=\partial_l v_0+\int_0^t\sum_{k=1}^{N}\partial_k\partial_k A_l(s)\,ds\\
&=\partial_l v_0+\int_0^t\Delta A_l(s)\,ds\\
&=\partial_l v_0+\int_0^t\partial_s A_l(s)\,ds\\
&=A_l(t),
\end{aligned}
\end{equation}
and
\begin{equation}
    \begin{aligned}
\partial_l\bar g
&=\partial_l g_0+\int_0^t\left[\sum_{k=1}^{N}\partial_l\partial_k B_k(s)
-\nabla\Bigl(\sum_{j=1}^{N}\partial_l(A_j)_j(s)\Bigr)\right]ds\\
&=\partial_l g_0+\int_0^t\left[\Delta B_l(s)
-\nabla\Bigl(\sum_{j=1}^{N}\partial_j(A_l)_j(s)\Bigr)\right]ds\\
&=\partial_l g_0+\int_0^t\left[\Delta B_l(s)-\nabla\operatorname{div}A_l(s)\right]ds\\
&=\partial_l g_0+\int_0^t\partial_s B_l(s)\,ds\\
&=B_l(t).
\end{aligned}
\end{equation}
Therefore,
\begin{equation}
    \begin{cases}
\bar v_t-\Delta\bar v=0,\\
\bar g_t-\Delta\bar g+\nabla\operatorname{div}\bar v=0,\\
(\bar v,\bar g)(0)=(v_0,g_0).
\end{cases}
\end{equation}

Combining \eqref{est: AkBk}, we have $$\begin{aligned}
\|\bar v(t)-v_0\|_{L^2}
&\le\int_0^t\sum_k\|\partial_kA_k(s)\|_{L^2}\,ds\\
&\le t\sum_k\|A_k\|_{C([0,T];H^1)}
\le Ct\bigl(\|\nabla v_0\|_{H^1}+\|\nabla g_0\|_{H^1}\bigr),
\end{aligned}$$
and 
$$\begin{aligned}
\|\bar g(t)-g_0\|_{L^2}
&\le\int_0^t\left(
\sum_k\|\partial_kB_k(s)\|_{L^2}
+\sum_j\|\nabla(A_j)_j(s)\|_{L^2}\right)ds\\
&\le t\sum_k\bigl(\|A_k\|_{C([0,T];H^1)}+\|B_k\|_{C([0,T];H^1)}\bigr)\\
&\le Ct\bigl(\|\nabla v_0\|_{H^1}+\|\nabla g_0\|_{H^1}\bigr).
\end{aligned}$$
Since $\bar v-v_0,$ $\bar g-g_0$ are indefinite integrals with values in $L^2$, we have
$\bar v-v_0,\bar g-g_0\in C([0,T];L^2)$. We also proved $\partial_j(\bar v-v_0)=A_j-A_j(0)\in C([0,T];H^1),$ and $\partial_j(\bar g-g_0)=B_j-B_j(0)\in C([0,T];H^1), $ which imply  
$$\bar v-v_0,\ \bar g-g_0\in C([0,T];H^2).$$
Since $\nabla g_0\in H^1(\mathbb R^N)\hookrightarrow L^6(\mathbb R^N)$ for $N=2,3$, and $g_0\in L^\infty$, the $W^{1,6}(B_1(y))$ norms of $g_0$ are bounded uniformly in $y$. The local Morrey inequality therefore gives a uniformly continuous representative of $g_0$. Hence $g_0\in\mathcal B$. Since $u_0\in H^2\hookrightarrow\mathcal B$, we also have $v_0=u_0+g_0\in\mathcal B$. Combining these facts with $H^2\hookrightarrow\mathcal B$, we obtain
\begin{equation}
    \bar v=v_0+(\bar v-v_0),
\bar g=g_0+(\bar g-g_0)
\in C([0,T];\mathcal{B}). 
\end{equation}
At the same time, $\partial_j\bar v=A_j$, $\partial_j\bar g=B_j$. By \eqref{est: AkBk}, $A_j,B_j\in L^2(0,T;H^2)$, so $D\bar v,D\bar g\in L^2(0,T;H^2).$ Finally, from the definitions
$$
\bar v_t=\sum_k\partial_kA_k,
\bar g_t=\sum_k\partial_kB_k-\nabla\sum_j(A_j)_j,
$$
and $A_k,B_k\in C([0,T];H^1)\cap L^2(0,T;H^2)$, we obtain
$$
\bar v_t,\bar g_t\in C([0,T];L^2)\cap L^2(0,T;H^1). 
$$
Combining these bounds yields 
\begin{equation}\label{est: reference field}
    \begin{aligned}
&\|\bar{v}\|_{C([0,T];\mathcal{B})}
+\|\bar{g}\|_{C([0,T];\mathcal{B})}
+\|\nabla\bar{v}\|_{L^2(0,T;H^2)}
+\|\nabla\bar{g}\|_{L^2(0,T;H^2)}\\
+&\|\bar{v}-v_0\|_{C([0,T];H^2)}
+\|\bar{g}-g_0\|_{C([0,T];H^2)}
\le C\bigl(\|v_0\|_{\mathcal{B}}+\|g_0\|_{\mathcal{B}}\bigr).
\end{aligned}
\end{equation}

In addition, by the same reasoning as \eqref{eq: curl-free of AB}, one can see that $\bar g$ is curl-free. Define
\begin{equation}
    \bar h(t):=h_0+\int_0^t(\operatorname{div}\bar g-\operatorname{div}\bar v)\,ds,\qquad h_0:=\log\rho_0.
\end{equation}
Then the compatibility condition is satisfied:
\begin{equation}
    \begin{aligned}
\nabla\bar h
&=\nabla h_0+\int_0^t\nabla\operatorname{div}\bar g-\nabla\operatorname{div}\bar v\,ds\\
&=g_0+\int_0^t\Delta\bar g-\nabla\operatorname{div}\bar v\,ds\\
&=g_0+\int_0^t\partial_s\bar g(s)\,ds=\bar g(t).
\end{aligned}
\end{equation}
Thus,\begin{equation}
    \bar h_t-\Delta\bar h
=\operatorname{div}\bar g-\operatorname{div}\bar v
-\operatorname{div}\bar g
=-\operatorname{div}\bar v.
\end{equation}
We also have the regularity estimate
\begin{equation}
    \begin{aligned}
\|\bar h(t)-h_0\|_{L^\infty}
&{\le} C\|\bar h(t)-h_0\|_{H^2}\\
&\le C\int_0^t\bigl(\|D\bar g(s)\|_{H^2}+\|D\bar v(s)\|_{H^2}\bigr)\,ds\\
&{\le}
C\|D\bar g\|_{L^2(0,T;H^2)}\sqrt{t}
+C\|D\bar v\|_{L^2(0,T;H^2)}\sqrt{t}
\le B_0\sqrt{t},
\end{aligned}
\end{equation}
and the time continuity
\begin{equation}
    \bar h-h_0\in C([0,T];BUC),\qquad \nabla\bar h=\bar g\in C([0,T];\mathcal{B}).
\end{equation}
\textbf{Step 3: Solving the perturbation quantities via the fixed point theorem}

\textbf{Step 3.1: The fixed-point space and preliminary estimates.}

Consider the subset of $X_T\times C([0,T];BUC)$ defined by
\begin{equation}\label{def: perturbed field}
    \mathcal{K}_T=\bigl\{(z,w,q):\ q(0)=0,\ \nabla q=w,\ \|(z,w)\|_{X_T}+\|q\|_{C([0,T];BUC)}\le1\bigr\}.
\end{equation}Here $\nabla q = \boldsymbol{w}$ holds in the sense of distributions. This is a closed subset: suppose $(z_{n},w_{n},q_{n})\in \mathcal{K}_{T}$ converges to $(z,w,q)$. The norm bound and the condition $q(0)=0$ are obviously preserved under taking limits. For every $t$ and every $\varphi\in C_c^\infty(\mathbb{R}^N)$, since $\nabla q_n=w_n$, $q_n\to q$ in $C([0,T];BUC)$, and $w_n\to w$ in $C([0,T];H^2)$, 
\begin{equation}
    \begin{aligned}
\langle\partial_j q(t),\varphi\rangle
&=-\int q(t)\,\partial_j\varphi=-\lim_{n\to\infty}\int q_n(t)\,\partial_j\varphi
=\lim_{n\to\infty}\int (w_n)_j(t)\varphi
=\int w_j(t)\varphi.
\end{aligned}
\end{equation}
That is, $\nabla q(t)=w(t),$ $\forall t\in[0,T]$ in $\mathcal{D}'(\mathbb{R}^N).$ 
Take any $(z,w,q)\in \mathcal{K}_T$ and set
\begin{equation}
    v=\bar v+z,\quad g=\bar g+w,\quad h=\bar h+q,\quad \rho=e^h,
\end{equation}
so that
\begin{equation}
    \rho= e^{\bar h}e^{q}= e^{h_{0}}e^{\bar h-h_{0}}e^{q}\in[e^{-(B_{0}+1)}\rho_{0},e^{B_{0}+1}\rho_{0}],
\qquad
\|\rho\|_{L^\infty}\leq e^{B_{0}+1}\|\rho_{0}\|_{L^\infty}.
\end{equation}
Since $\bar v\in C([0,T];\mathcal{B})$ and $z\in X_T\subset C([0,T];H^2)\subset C([0,T];\mathcal{B})$, we have
\begin{equation}
    \|v\|_{C([0,T];\mathcal{B})}
\le \|\bar v\|_{C([0,T];\mathcal{B})}+\|z\|_{C([0,T];\mathcal{B})}
\le C(\|v_0\|_{\mathcal{B}}+\|g_0\|_{\mathcal{B}})+C.
\end{equation}
Similarly, since $\bar g\in C([0,T];\mathcal{B})$ and $w\in X_T\subset C([0,T];H^2)\subset C([0,T];\mathcal{B})$, we have
\begin{equation}
    \|g\|_{C([0,T];\mathcal{B})}
\le \|\bar g\|_{C([0,T];\mathcal{B})}+\|w\|_{C([0,T];\mathcal{B})}
\le C(\|v_0\|_{\mathcal{B}}+\|g_0\|_{\mathcal{B}})+C.
\end{equation}
For $\nabla v$, since $\nabla v=\nabla\bar v+\nabla z$, with $\nabla\bar v\in L^2(0,T;H^2)$ and $\nabla z\in L^2(0,T;H^2),$ we have $\nabla v\in L^2(0,T;H^2)$. Similarly, $\nabla g\in L^2(0,T;H^2)$. For $g-g_0$, since $g-g_0=(\bar g-g_0)+w$, with $\bar g-g_0\in C([0,T];H^2)\subset C([0,T];L^2)$ and $w\in C([0,T];H^2),$ we have $g-g_0\in C([0,T];H^2)$. Therefore,  
\begin{equation}\label{est: regular of vg}
    \|v\|_{C([0,T];\mathcal{B})}
+\|g\|_{C([0,T];\mathcal{B})}
+\|\nabla v\|_{L^2(0,T;H^2)}
+\|\nabla g\|_{L^2(0,T;H^2)}
+\|g-g_0\|_{C([0,T];H^2)}
\le C,
\end{equation}where $C$ depends only on the initial data.

We have $h-h_0=(\bar h-h_0)+q\in C([0,T];BUC)$. Since $\|h-h_0\|_{L^\infty}\le C$ and the exponential function is Lipschitz on bounded balls in $BUC$, it follows that $e^{h-h_0}\in C([0,T];BUC)$.  
Also, $\rho=e^h=e^{h_0}e^{h-h_0}=\rho_0 e^{h-h_0}$. Since $\rho_0\in H^3\hookrightarrow L^2\cap BUC$, we obtain $\rho\in C([0,T];L^2\cap BUC)$.  
Moreover, $\nabla\rho=\rho g=\rho g_0+\rho(g-g_0)$. Because $\rho\in C([0,T];L^2\cap BUC)$, $g_0\in L^\infty$, and $g-g_0\in C([0,T];H^2)\hookrightarrow C([0,T];BUC)\to C([0,T];L^\infty)$, it follows that $\nabla\rho\in C([0,T];L^2)$. We next estimate the uniform $H^1$ bound of the pressure. We have
$$
\mathfrak{p}= \gamma \rho^{\gamma-1}g
= \gamma \rho^{\gamma-1}(g-g_0)+\gamma \rho^{\gamma-1}g_0
= \gamma \rho^{\gamma-1}(g-g_0)+\left(\frac{\rho}{\rho_0}\right)^{\gamma-1}\mathfrak{p}_0,
$$
and
$$
\partial_j \mathfrak{p}_i
= \gamma \rho^{\gamma-1} \partial_j g_i+(\gamma-1)\gamma \rho^{\gamma-2}\partial_j\rho\, g_i
= \gamma \rho^{\gamma-1} \partial_j g_i+(\gamma-1)g_j \mathfrak{p}_i.
$$
Therefore,
\begin{equation}\label{est: regular-pressure-CL2}
    \|\mathfrak{p}\|_{C([0,T];L^2)}
\le \gamma\left(e^{B_0+1}\|\rho_0\|_{L^\infty}\right)^{\gamma-1}
\|g-g_0\|_{C([0,T];L^2)}
+e^{(B_0+1)(\gamma-1)}\|\mathfrak{p}_0\|_{L^2}
\le C,
\end{equation}
where $C$ depends only on the initial data. Also,
\begin{equation}\label{est: regular-pressure-CH1}
    \|\nabla \mathfrak{p}\|_{C([0,T];L^2)}
\le \gamma\|\rho\|_{C([0,T];L^\infty)}^{\gamma-1}
\|\nabla g\|_{C([0,T];L^2)}
+(\gamma-1)\|g\|_{C([0,T];L^\infty)}
\|\mathfrak{p}\|_{C([0,T];L^2)}
\le C,
\end{equation}
where $C$ depends only on the initial data. Furthermore,
\begin{equation}\label{est: regular-pressure-CLinfty}
    \|\mathfrak{p}\|_{C([0,T];L^\infty)}
\le \gamma\|\rho\|_{C([0,T];L^\infty)}^{\gamma-1}
\|g\|_{C([0,T];L^\infty)}
\le \gamma\|\rho\|_{C([0,T];L^\infty)}^{\gamma-1}
\|g\|_{C([0,T];\mathcal{B})}
\le C,
\end{equation}
where $C$ depends only on the initial data. Hence, it holds that $\mathfrak{p}\in C([0,T];H^1)\cap C([0,T];L^\infty).$

We also need to close the pressure difference. We have
\begin{equation}
    \begin{aligned}
\mathfrak{p}_1-\mathfrak{p}_2
&= \gamma\rho_1^{\gamma-1} g_1-\gamma\rho_2^{\gamma-1} g_2\\
&= \gamma\rho_1^{\gamma-1}(g_1-g_2)+\gamma(\rho_1^{\gamma-1}-\rho_2^{\gamma-1})g_2\\
&= \gamma\rho_1^{\gamma-1}(g_1-g_2)
+\gamma\bigl(e^{(\gamma-1)h_1}-e^{(\gamma-1)h_2}\bigr)g_2\\
&= \gamma\rho_1^{\gamma-1}(g_1-g_2)
+\gamma e^{(\gamma-1)h_2}\bigl(e^{(\gamma-1)(h_1-h_2)}-1\bigr)g_2\\
&= \gamma\rho_1^{\gamma-1}(g_1-g_2)
+\gamma\rho_2^{\gamma-1}\bigl(e^{(\gamma-1)(q_1-q_2)}-1\bigr)g_2\\
&= \gamma\rho_1^{\gamma-1}(g_1-g_2)
+\bigl(e^{(\gamma-1)(q_1-q_2)}-1\bigr)\mathfrak{p}_2.
\end{aligned}
\end{equation}
Since $\nabla\delta q=\delta g$ ($\delta(\cdot):= (\cdot)_{1}-(\cdot)_2$), we obtain
\begin{equation}
    \begin{aligned}
\nabla(\mathfrak{p}_1-\mathfrak{p}_2)
&= \gamma\nabla(\rho_1^{\gamma-1}\delta g)
+\nabla\bigl((e^{(\gamma-1)\delta q}-1)\mathfrak{p}_2\bigr)\\
&= \gamma(\gamma-1)\rho_1^{\gamma-1} g_1\delta g
+\gamma\rho_1^{\gamma-1}\nabla\delta g
+(\gamma-1)e^{(\gamma-1)\delta q}\mathfrak{p}_2\nabla\delta q
+(e^{(\gamma-1)\delta q}-1)\nabla\mathfrak{p}_2.
\end{aligned}
\end{equation}
Thus,
\begin{equation}\label{est: pressure L2}
    \|\mathfrak{p}_1-\mathfrak{p}_2\|_{L^2}
\le \gamma\|\rho_1\|_{L^\infty}^{\gamma-1}\|g_1-g_2\|_{L^2}
+\|e^{(\gamma-1)(q_1-q_2)}-1\|_{L^\infty}\|\mathfrak{p}_2\|_{L^2}
\le C\|g_1-g_2\|_{L^2}+C\|q_1-q_2\|_{L^\infty},
\end{equation}
and
\begin{equation}\label{est: pressure H1}
    \|\nabla(\mathfrak{p}_1-\mathfrak{p}_2)\|_{L^2}
\le  C\|g_1-g_2\|_{H^1}+C\|q_1-q_2\|_{L^\infty}.
\end{equation}
\textbf{Step 3.2: Construction of a contraction self-map}
Let
$$
\Phi:\mathcal{K}_T\to X_T\times C([0,T];BUC),\quad (z,w,q)\mapsto(z',w',q'),
$$
where $(z',w')\in X_T$ satisfies
$$
\begin{cases}
z'_t-\Delta z'=f:=-(v-2g)\cdot\nabla v-\mathfrak{p},\\
w'_t-\Delta w'+\nabla\operatorname{div}z'=\nabla F,\quad F:=|g|^2-v\cdot g,\\
(z',w')(0)=0.
\end{cases}
$$
The existence of a solution $(z',w')\in X_T$ is guaranteed by Lemma \ref{lem: sol of linear eq}. Let
$$
q'(t):=\int_0^t\bigl(\operatorname{div}w'(s)-\operatorname{div}z'(s)+F(s)\bigr)\,ds.
$$
Since $g,v\in C([0,T];\mathcal{B})\subset C([0,T];BUC)$, we have $F=|g|^2-v\cdot g\in C([0,T];BUC)$. Also, since $w',z'\in X_T$, we have $\operatorname{div}w',\operatorname{div}z'\in L^2(0,T;H^2)\subset L^1(0,T;BUC)$. Hence $q'\in C([0,T];BUC)$ and $q'(0)=0$.

Next, we verify that $\Phi$ is a self-map on the closed ball $\mathcal{K}_T$.

(1) $q'(0)=0$ and $\nabla q'=w'$. Since $\Delta w'-\nabla\operatorname{div}z'+\nabla F=w'_t$, we have
$$
\begin{aligned}
\nabla q'(t)
&= \int_0^t\bigl(\Delta w'-\nabla\operatorname{div}z'+\nabla F\bigr)\,ds\\
&= \int_0^t w'_t(s)\,ds
= w'(t)-w'(0)=w'(t).
\end{aligned}
$$
Here we use the fact that 
$$\begin{cases} \partial_t(\partial_iw'_j-\partial_jw'_i) -\Delta(\partial_iw'_j-\partial_jw'_i)=0\\ (\partial_iw'_j-\partial_jw'_i)(0)=0 \end{cases}\implies \partial_iw'_j=\partial_jw'_i, \qquad \nabla\operatorname{div}w'=\Delta w'.$$
(2) $\|(z',w')\|_{X_T}\le C\sqrt{T}$, $\|q'\|_{C([0,T];BUC)}\le CT$. Thus, when $C(\sqrt{T}+T)\le 1$, we have $\Phi(\mathcal{K}_T)\subset \mathcal{K}_T$.

By Lemma \ref{lem: Product Estimates}, one has 
\begin{equation}
    \begin{aligned}
\|f\|_{C([0,T];H^1)}
&\le \|(v-2g)\cdot\nabla v\|_{C([0,T];H^1)}+\|\mathfrak{p}\|_{C([0,T];H^1)}\\
&\le C(\|v\|_{C([0,T];\mathcal{B})}+\|g\|_{C([0,T];\mathcal{B})})\|v\|_{C([0,T];\mathcal{B})}+\|\mathfrak{p}\|_{C([0,T];H^1)}\le C,
\end{aligned}
\end{equation}
\begin{equation}
    \|F\|_{C([0,T];L^\infty)}
\le C(\|g\|_{C([0,T];\mathcal{B})}^2+\|v\|_{C([0,T];\mathcal{B})}\|g\|_{C([0,T];\mathcal{B})})\le C,
\end{equation}
and 
\begin{equation}
    \begin{aligned}
\|\nabla F\|_{C([0,T];H^1)}
&\le C(\|\nabla(g\cdot g)\|_{C([0,T];H^1)}+\|\nabla(v\cdot g)\|_{C([0,T];H^1)})\\
&\le C(\|g\|_{C([0,T];\mathcal{B})}^2+\|v\|_{C([0,T];\mathcal{B})}\|g\|_{C([0,T];\mathcal{B})})\\
&\le C.
\end{aligned}
\end{equation}
Therefore, by Lemma \ref{lem: sol of linear eq}, it holds that 
\begin{equation}
    \|(z',w')\|_{X_T}\le C\|(f,\nabla F)\|_{L^2(0,T;H^1)}
\le C\sqrt{T}(\|f\|_{C([0,T];H^1)}+\|\nabla F\|_{C([0,T];H^1)})
\le C\sqrt{T},
\end{equation}
and
\begin{equation}
    \begin{aligned}
\|q'\|_{C([0,T];L^\infty)}
&\le T\|F\|_{C([0,T];L^\infty)}
+C\int_0^T(\|z'\|_{H^3}+\|w'\|_{H^3})\,ds\\
&\le CT+C\sqrt{T}\|(z',w')\|_{X_T}\le CT.
\end{aligned}
\end{equation}
Next, we verify that $\Phi$ is a contraction. We need to show that
$$
\|\Phi(z_1,w_1,q_1)-\Phi(z_2,w_2,q_2)\|
\le\frac12\|(z_1,w_1,q_1)-(z_2,w_2,q_2)\|.
$$
Let
$$
d:
=\|(z_1-z_2,w_1-w_2)\|_{X_T}+\|q_1-q_2\|_{C([0,T];BUC)}.
$$
By Lemma \ref{lem: Product Estimates} and \ref{lem: sol of linear eq}, 
\begin{equation}
    \begin{aligned}
\|(z'_1-z'_2,w'_1-w'_2)\|_{X_T}
&\le C\|(f_1-f_2,\nabla(F_1-F_2))\|_{L^2(0,T;H^1)}\\
&\le C\sqrt{T}\bigl(\|f_1-f_2\|_{C([0,T];H^1)}+\|\nabla(F_1-F_2)\|_{C([0,T];H^1)}\bigr)\\
&\le C\sqrt{T}d,
\end{aligned}
\end{equation}
and
\begin{equation}
    \begin{aligned}
\|q_1'-q_2'\|_{C([0,T];L^\infty)}
&\le \int_0^T\|\nabla(z'_1-z'_2)(s)\|_{L^\infty}+\|\nabla(w'_1-w'_2)(s)\|_{L^\infty}\,ds
+T\|F_1-F_2\|_{C([0,T];L^\infty)}\\
&\le C\sqrt{T}\|(z'_1-z'_2,w'_1-w'_2)\|_{X_T}
+T\|F_1-F_2\|_{C([0,T];L^\infty)}\\
&\le CTd.
\end{aligned}
\end{equation}
Therefore,
\begin{equation}
    \begin{aligned}
\|\Phi(z_1,w_1,q_1)-\Phi(z_2,w_2,q_2)\|_{X_T\times C([0,T];BUC)}
&=\|(z'_1-z'_2,w'_1-w'_2)\|_{X_T}+\|q'_1-q'_2\|_{C([0,T];BUC)}\\
&\le C\sqrt{T}d+CTd
=C_*(\sqrt{T}+T)d\\
&\le \frac12\|(z_1,w_1,q_1)-(z_2,w_2,q_2)\|.
\end{aligned}
\end{equation}
By the Banach fixed point theorem, $\Phi$ has a unique fixed point $(z,w,q)$ in $\mathcal{K}_T$.

\textbf{Step 4: Density equation and higher-order regularity}

Let $(z_{0},w_{0},q_{0})=(0,0,0)$ and $(z_{n},w_{n},q_{n})=\Phi^{n}(z_{0},w_{0},q_{0})$. For $n\ge 1,$ set
\begin{equation}
    v_n=\bar v+z_n,\quad g_n=\bar g+w_n,\quad h_n=\bar h+q_n,\quad \rho_n=e^{h_n}.
\end{equation}
By the definition of $\Phi$, $(z_{n+1},w_{n+1},q_{n+1})$ satisfies
\begin{equation}\label{eq: aux-iter}
    \begin{cases}
(v_{n+1})_{t}-\Delta v_{n+1}=f_{n},\\
(g_{n+1})_{t}-\Delta g_{n+1}+\nabla\operatorname{div}v_{n+1}=\nabla F_n,\\
(q_{n+1})_{t}-\Delta q_{n+1}=F_n-\operatorname{div}z_{n+1}.
\end{cases}
\end{equation}
Here\begin{equation}
    f_{n}=-(v_n-2g_n)\cdot\nabla v_n-\mathfrak{p}_n,\quad
F_n=|g_n|^2-v_n\cdot g_n,\quad
\mathfrak{p}_{n}=\gamma \rho_{n}^{\gamma-1}g_{n}.
\end{equation}
Thus $h_{n+1}=\bar h+q_{n+1}$ satisfies
\begin{equation}
    (h_{n+1})_{t}-\Delta h_{n+1}=F_n-\operatorname{div}v_{n+1}.
\end{equation}
By the definition of $\mathcal{K}_{T}$, $\nabla q=w$, and by the definition of $\Phi$, the map $q\mapsto q'$ satisfies $\nabla q'=w'$. Hence, by induction, $\nabla q_{n}=w_{n}$. Combining this with $\nabla \bar h=\bar g$, we obtain
\begin{equation}
    \nabla h_{n}=\nabla q_{n}+\nabla \bar h=w_{n}+\bar g=g_{n}
{\implies}
g_{n}=\nabla h_{n}=\nabla\log\rho_{n}.
\end{equation}
Also,
\begin{equation}
    \rho_{n}=e^{\bar h}e^{q_{n}}=e^{h_{0}}e^{\bar h-h_{0}}e^{q_{n}}
\in[e^{-(B_{0}+1)}\rho_{0},e^{B_{0}+1}\rho_{0}],
\end{equation}
since
$$
\|\bar h(t)-h_{0}\|_{L^{\infty}}\le B_{0}\sqrt{t}\le B_{0}
\quad\text{and}\quad
\|q_n\|_{C([0,T];L^\infty)}\le 1.
$$
By the contraction estimate, $(z_{n},w_{n},q_{n})$ is a Cauchy sequence, such that
\begin{equation}
    \|(z_{n+1}-z_n,w_{n+1}-w_n)\|_{X_T}
+\|q_{n+1}-q_n\|_{C([0,T];L^\infty)}
\le C2^{-n}.
\end{equation}
Since $X_{T}$ and $C([0,T];BUC)$ are complete, there exists a limit $(z,w,q)\in X_T\times C([0,T];BUC)$ and 
\begin{equation}
    (z_n,w_n)\to(z,w)\quad\text{in } C([0,T];H^2)\cap L^2(0,T;H^3),
\qquad
q_n\to q\quad\text{in } C([0,T];BUC).
\end{equation}
Define the limits 
\begin{equation}
    v=\bar v+z,\quad g=\bar g+w,\quad h=\bar h+q,\quad \rho=e^h.
\end{equation}
Noting that $$|\rho_{n}-\rho|
=|e^{\bar h+q_{n}}-e^{\bar h+q}|
=|\rho(e^{q_{n}-q}-1)|
=\rho e^{\xi}|q_{n}-q|,
$$ for some $\xi$ between $0$ and $q_{n}-q,$ we have 
$$ \left\lVert \rho_{n}-\rho \right\rVert _{CL^{\infty}}\leq e^{2}\left\lVert \rho \right\rVert_{CL^{\infty}}\left\lVert q_{n}-q \right\rVert _{CL^{\infty}}\leq e^{2+B}\left\lVert \rho_{0} \right\rVert _{L^{\infty}}\left\lVert q_{n}-q \right\rVert _{CL^{\infty}}, $$
and 
$$ \|\rho_n-\rho\|_{CL^2}\le e^{2+B}\|\rho_0\|_{L^2}\left\lVert q_{n}-q \right\rVert _{CL^{\infty}}.$$
Therefore, 
\begin{equation}
    \|\rho_{n}-\rho\|_{C([0,T];L^2\cap L^\infty)}
\le C_B(\|\rho_0\|_{L^2}+\|\rho_0\|_{L^\infty})
\|q_{n}-q\|_{C([0,T];L^\infty)}.
\end{equation}
We will next take the limit of $\eqref{eq: aux-iter}_{1,2}$ to derive the equations for $v,g$. We already know that$$(v_n-v,g_n-g)=(z_n-z,w_n-w)\to0 
\text{ in } X_{T},$$
and they are uniformly bounded in $C([0,T];\mathcal{B})$:
$$
\|v_{n}\|_{C([0,T];\mathcal{B})}
\le \|\bar v\|_{C([0,T];\mathcal{B})}+\|z_{n}\|_{C([0,T];\mathcal{B})}
\le C(\|v_{0}\|_{\mathcal{B}}+\|g_{0}\|_{\mathcal{B}}+1),
$$
$$
\|g_{n}\|_{C([0,T];\mathcal{B})}
\le \|\bar g\|_{C([0,T];\mathcal{B})}+\|w_{n}\|_{C([0,T];\mathcal{B})}
\le C(\|v_{0}\|_{\mathcal{B}}+\|g_{0}\|_{\mathcal{B}}+1).
$$
For the convergence of the right hand side of  $\eqref{eq: aux-iter}_{1,2}$, by Lemma \ref{lem: Product Estimates}, it holds that 
\begin{equation}
\begin{aligned}
&\|(v_n-2g_n)\cdot\nabla v_n
  -(v-2g)\cdot\nabla v\|_{C([0,T];H^1)}
\\
=& \Bigl\|
   \bigl((v_n-v)-2(g_n-g)\bigr)\cdot\nabla v_n
\\
   &+(v-2g)\cdot\nabla(v_n-v)
   \Bigr\|_{C([0,T];H^1)}
\\
\le& C\Bigl(
   \|v_n-v\|_{C([0,T];\mathcal{B})}
   +\|g_n-g\|_{C([0,T];\mathcal{B})}
   \Bigr)
\\
&\times
   \|v_n\|_{C([0,T];\mathcal{B})}
\\
&+C\|v-2g\|_{C([0,T];\mathcal{B})}
   \|v_n-v\|_{C([0,T];\mathcal{B})}
   \longrightarrow 0.
\end{aligned}
\end{equation}
and 
\begin{equation}
    \begin{aligned}
\left\lVert F_{n}-F \right\rVert _{C([0,T];H^{2})}&= \left\lVert |g_n|^2-v_n\cdot g_n-\left| g \right| ^{2}+v\cdot g \right\rVert _{C([0,T];H^{2})}\\
&= \left\lVert (g_n+g)\cdot(g_n-g)-(v_n-v)\cdot g_n-v\cdot(g_n-g) \right\rVert _{C([0,T];H^{2})}\\
&\leq C(\left\lVert g_{n}\right\rVert_{C([0,T];\mathcal{B})} +\left\lVert g\right\rVert_{C([0,T];\mathcal{B})} +\left\lVert v\right\rVert_{C([0,T];\mathcal{B})} )\\
&\quad\times(\left\lVert g_{n}-g \right\rVert_{C([0,T];H^{2})}+\left\lVert v_{n}-v \right\rVert _{C([0,T];H^{2})})\\
&\leq C\bigl(\|v_n-v\|_{C([0,T];H^2)}+\|g_n-g\|_{C([0,T];H^2)}\bigr)\to 0.
\end{aligned}
\end{equation}
As for the pressure term, we obtain from \eqref{est: regular-pressure-CL2}-\eqref{est: regular-pressure-CLinfty}, \eqref{est: pressure L2} and \eqref{est: pressure H1} that $\mathfrak{p}\in C([0,T];H^{1})\cap C([0,T];L^{\infty})$ s.t. 
\begin{equation}
    \|\mathfrak{p}_n-\mathfrak{p}\|_{C([0,T];H^1)}
\le C\bigl(\|g_n-g\|_{C([0,T];H^1)}+\|q_n-q\|_{C([0,T];L^\infty)}\bigr)\to 0.
\end{equation}
Since the right-hand sides of $\eqref{eq: aux-iter}_{1,2}$ converge in $C([0,T];L^2)\cap L^2(0,T;H^1)$, we integrate these equations over $[0,t]$ and then take the limit $n \to \infty$. This yields, for every $t \in [0,T]$,
\begin{equation}
    \begin{aligned}
v(t)-v_0 &= \int_0^t \big(\Delta v - (v-2g)\cdot\nabla v - \mathfrak{p}\big)(s)\,ds, \\
 g(t)-g_0 &= \int_0^t \big(\Delta g - \nabla\mathrm{div}v + \nabla F\big)(s)\,ds,
 \end{aligned}
\quad \text{in } L^2(\mathbb{R}^N).
\end{equation}
The integrands belong to $C([0,T];L^2)\cap L^2(0,T;H^1)$. By the fundamental theorem of Bochner integrals, we have $v-v_0, g-g_0 \in C^1([0,T];L^2) \cap W^{1,2}(0,T;H^1)$, and their time derivatives satisfy
\begin{equation}
    \begin{cases}
 v_t-\Delta v=-(v-2g)\cdot\nabla v-\mathfrak{p},\\
 g_t-\Delta g+\nabla\operatorname{div}v=\nabla F,
 \end{cases}
 \quad \text{in } C([0,T];L^2)\cap L^2(0,T;H^1).
\end{equation}

We then take the limit of $\eqref{eq: aux-iter}_3$ to obtain the equation for $h.$ From the definitions of $\bar h$ and $q_{n+1}$, 
\begin{equation}
    \begin{aligned}
h_{n+1}-h_{0}&= \overline{h}-h_{0}+q_{n+1}\\
&=\int^{t}_{0}\left( \text{div}\left(\overline{g}\right) -\text{div}\left(\overline{v}\right)\right)ds+\int^{t}_{0}\text{div}\left(w_{n+1}\right)-\text{div}\left(z_{n+1}\right)+F_{n}ds \\
&= \int^{t}_{0}(\text{div}\left(g_{n+1}\right)-\text{div}\left(v_{n+1}\right)+F_{n})ds,
\end{aligned}
\end{equation}
Taking the limit in $C([0,T];BUC)$ on both sides, the left-hand side converges because $q_{n+1}\to q$ in $C([0,T];BUC)$, hence $h_{n+1}-h=q_{n+1}-q\to0$ in $C([0,T];BUC)$. For the right-hand side, since 
\begin{equation}
    \begin{aligned}
&\sup_{0\le t\le T}\left\|\int_0^t[\operatorname{div}(g_{n+1}-g)-\operatorname{div}(v_{n+1}-v)+F_n-F]\,ds\right\|_{BUC}\\
\le& C\sqrt{T}(\|\nabla(g_{n+1}-g)\|_{L^2(0,T;BUC)}+\|\nabla(v_{n+1}-v)\|_{L^2(0,T;BUC)})+CT\|F_n-F\|_{C([0,T];BUC)}\\
\le& C\sqrt{T}(\|g_{n+1}-g\|_{L^2(0,T;H^3)}+\|v_{n+1}-v\|_{L^2(0,T;H^3)})+CT\|F_n-F\|_{C([0,T];H^2)}\to0.
\end{aligned}
\end{equation}
We obtain
\begin{equation}
    h-h_0=\int_0^t(\operatorname{div}g-\operatorname{div}v+F)\,ds.
\end{equation}
Next we determine the regularity of the integrand. \eqref{est: regular of vg} implies that $\operatorname{div}v,\operatorname{div}g\in L^2(0,T;H^2).$ From Lemma \ref{lem: Product Estimates}, $v-g=(v_0-g_0)+(v-v_0)-(g-g_0)\in C([0,T];H^2),$ and $g\in C([0,T];\mathcal{B})$ yield $F=-(v-g)\cdot g\in C([0,T];H^2).$ Since the integrand $\operatorname{div}g - \operatorname{div}v + F \in L^2(0,T;H^2)$, the fundamental theorem for Bochner integrals yields $h-h_0 \in W^{1,2}(0,T;H^2) \subset C([0,T];H^2),$ and 
\begin{equation}\label{eq: h_t}
    h_t=\operatorname{div}g-\operatorname{div}v+F \in L^2(0,T;H^2).
\end{equation}

 We now verify that $\Delta h=\operatorname{div}g$ in the distributional sense. As $\mathcal{K}_{T}$ is closed, $\nabla q=\boldsymbol{w}$ in the sense of distributions. Combining this with $\nabla\bar h(t)=\bar g(t)$, we have
$$
\nabla h(t)=\nabla(\bar h+q)(t)=\bar g(t)+w(t)=g(t)\quad\text{in }\mathcal{D}'(\mathbb{R}^N),
$$
so
$$
\langle\Delta h(t),\varphi\rangle=-\langle\nabla h,\nabla\varphi\rangle=-\langle g,\nabla\varphi\rangle=-\int g\cdot\nabla\varphi=\langle\operatorname{div}g(t),\varphi\rangle,
$$
i.e.
$$
\Delta h=\operatorname{div}g,\quad\forall t\in[0,T]\text{ in }\mathcal{D}'(\mathbb{R}^N).
$$
Since $\operatorname{div}g\in L^2(0,T;H^2)$, this shows that the weak derivative $\Delta h=\operatorname{div}g\in L^2(0,T;H^2).$ Thus
\begin{equation}
    h_t-\Delta h=F-\operatorname{div}v \quad\text{in }L^2(0,T;H^2).
\end{equation}
Using the chain rule, we obtain the equation for $\rho$. Let $\rho=e^h$. Since $\rho_0 \in H^3$ is strictly positive, the local composition rule gives $h_0=\log\rho_0 \in H^3_{\mathrm{loc}}$. We have already established $h-h_0\in C([0,T];H^2)$. Moreover, \eqref{est: regular of vg} yields $\nabla(h-h_0)=g-g_0\in C([0,T];H^2).$ Thus we have $h-h_0\in C([0,T];H^3).$ Together with $h_t\in L^2(0,T;H^2)$ from \eqref{eq: h_t}, this provides the regularity needed for the following chain-rule calculation
$$
\rho_t=\rho h_t,\qquad \nabla\rho=\rho g,\qquad \Delta\rho=\rho(\operatorname{div}g+|g|^2).
$$
Therefore,
$$
\begin{aligned}
\rho_t-\Delta\rho
&=\rho(h_t-\operatorname{div}g-|g|^2)\\
&=\rho(F-\operatorname{div}v-|g|^2)\\
&=-\rho(v\cdot g+\operatorname{div}v)\\
&=-v\cdot\nabla\rho-\rho\operatorname{div}v\\
&=-\operatorname{div}(\rho v).
\end{aligned}
$$
In Step 3 we deduce $\rho\in C([0,T];L^2\cap L^\infty),$ and since $g\in C([0,T];L^\infty)$, we have $\nabla\rho=\rho g\in C([0,T];L^2)\cap C([0,T];L^\infty)$. Combining $h_t$, $\operatorname{div}g$, $\operatorname{div}v\in L^2(0,T;H^2)$ and $v,g\in C([0,T];L^\infty)$, we get
$$
\begin{aligned}
\|\rho_t\|_{L^2(0,T;L^2)} &\le \|\rho\|_{C([0,T];L^\infty)}\|h_t\|_{L^2(0,T;L^2)},\\
\|\Delta\rho\|_{L^2(0,T;L^2)} &\le \|\rho\|_{C([0,T];L^\infty)}\|\operatorname{div}g\|_{L^2(0,T;L^2)} +\sqrt{T}\|\rho\|_{C([0,T];L^2)}\|g\|_{C([0,T];L^\infty)}^2,\\
\|\operatorname{div}(\rho v)\|_{L^2(0,T;L^2)} &\le \sqrt{T}\|v\|_{C([0,T];L^\infty)}\|\nabla\rho\|_{C([0,T];L^2)} +\|\rho\|_{C([0,T];L^\infty)}\|\operatorname{div}v\|_{L^2(0,T;L^2)}.
\end{aligned}
$$
Hence
\begin{equation}\label{eq: rho in auxi form}
    \rho_t-\Delta\rho=-\operatorname{div}(\rho v) \quad\text{in }L^2(0,T;L^2).
\end{equation}
We proceed to estimate higher-order regularity of the density. By Lemma \ref{lem: sol of linear eq}, from $v\in C([0,T];L^\infty)\subset L^2(0,T;L^\infty),$ $
\nabla v\in L^2(0,T;H^2),$ $
\rho_0\in H^3,$
it follows that the density equation has a solution
$$
R\in C([0,T];H^3)\cap L^2(0,T;H^4).
$$
We then prove that this is indeed the limit $\rho$ obtained by iteration. Let
$$
d=R-\rho\in C([0,T];L^2)\cap L^2(0,T;H^1),
$$
which satisfies
$$
\begin{cases}
d_t-\Delta d=-\operatorname{div}(dv),\\
d(0)=0.
\end{cases}
$$
Testing with $d$ gives
$$
\frac12\frac{d}{dt}\|d\|_{L^2}^2+\|\nabla d\|_{L^2}^2
=\int vd\cdot\nabla d
\le \frac12\|\nabla d\|_{L^2}^2+C\|v\|_{L^\infty}^2\|d\|_{L^2}^2.
$$
By Gronwall's inequality, $d=0$, hence
$$
\rho=R\in C([0,T];H^3)\cap L^2(0,T;H^4),
$$
such that
\begin{equation}\label{est: ho regular of rho}
   \begin{aligned}
        \sup_{0\le t\le T}\|\rho(t)\|_{H^3}^2
+\int_0^T\|\nabla\rho\|_{H^3}^2\,dt
&\le C\|\rho_0\|_{H^3}^2
\exp\left(C\int_0^T\left(1+\|v\|_{L^\infty}^2+\|\nabla v\|_{H^2}^2\right)dt\right)\le C,
   \end{aligned}
\end{equation}
where we use $\|\rho_0\|_{H^3}\le A_0$ and \eqref{est: regular of vg}.

\textbf{Step 5: Return from the auxiliary variables to $(\rho,u)$}

\textbf{Step 5.1: Time regularity of the auxiliary variables}

We want to show that
\begin{equation}\label{regular of v_t g_t rho_t}
    v_t,g_t\in C([0,T];L^2)\cap L^2(0,T;H^1),\quad
\rho_t\in C([0,T];H^1)\cap L^2(0,T;H^2).
\end{equation}
It follows from \eqref{est: regular of vg} that $v\in C([0,T];\mathcal{B})$ and $\nabla v\in L^2(0,T;H^2).$ Thus we have $\Delta v\in C([0,T];L^2)\cap L^2([0,T];H^1).$ By Lemma \ref{lem: Product Estimates}, $v,g\in C([0,T];\mathcal{B})$ yields $(v-2g)\cdot\nabla v\in C([0,T];H^1),$ and $\mathfrak{p}\in C([0,T];H^1)$ by \eqref{est: regular-pressure-CL2} and \eqref{est: regular-pressure-CH1}.Hence
$$
v_t=\Delta v
-(v-2g)\cdot\nabla v
-\mathfrak{p}
\in C([0,T];L^2)\cap L^2(0,T;H^1).
$$
In the same way as for $\Delta v,$ it holds that $\Delta g,\nabla\operatorname{div}v\in C([0,T];L^2)\cap L^2(0,T;H^1).$ By Lemma \ref{lem: Product Estimates}, $$F=|g|^2-v\cdot g\implies \nabla F\in C([0,T];H^1)\subset C([0,T];L^2)\cap L^2(0,T;H^1).$$Therefore,
$$g_t=\Delta g-\nabla\operatorname{div}v+\nabla F\in C([0,T];L^2)\cap L^2(0,T;H^1).$$ Recall from \eqref{est: ho regular of rho} that $\Delta\rho\in C([0,T];H^1)\cap L^2(0,T;H^2).$ Lemma \ref{lem: Product Estimates} shows that $\rho v\in C([0,T];H^2),$ and $$\|\rho v\|_{L^2(0,T;H^3)}
\le C\|\rho\|_{C([0,T];H^3)}
\bigl(\sqrt{T}\|v\|_{C([0,T];L^\infty)}+\|\nabla v\|_{L^2(0,T;H^2)}\bigr)<\infty.$$Hence $$\rho_t=\Delta\rho-\operatorname{div}(\rho v)\in C([0,T];H^1)\cap L^2(0,T;H^2).$$

\textbf{Step 5.2: Recovery of the regularity of $u,u_t$ and verification of the equations}

\eqref{ass: initial data  of local sol}, \eqref{est: reference field}, \eqref{def: perturbed field} give 
\begin{equation}
    \begin{aligned}
u&= v-g=(\bar v+z)-(\bar g+w)=(\bar v-\bar g)+(z-w)\\
&=(\bar v-v_0)-(\bar g-g_0)
+(v_0-g_0)
+(z-w)
\in C([0,T];H^2).
\end{aligned}
\end{equation}
Combining \eqref{est: regular of vg}, we derive $u\in L^2(0,T;H^3).$ By virtue of \eqref{regular of v_t g_t rho_t}, we get $u_t=v_t-g_t\in C([0,T];L^2)\cap L^2(0,T;H^1).$

We then recover the original equations. Substituting $\nabla\rho=\rho g,$ $u=v-g$ into \eqref{eq: rho in auxi form}, we recover
\begin{equation}\label{eq: rho}
\rho_t+\operatorname{div}(\rho u)=0.
\end{equation}
Multiplying $\eqref{rhovghF}_2$ by $\rho$, substituting $\nabla\rho=\rho g$, $\mathfrak{p}=\gamma\rho^{\gamma-1}g$, we get
\begin{equation}\label{eq: v tmp}
    \rho(v_t+(u\cdot\nabla)v)+\nabla(\rho^\gamma)=\operatorname{div}(\rho\nabla v).
\end{equation}
We also need the equation for $g$. From \eqref{eq: h_t}, 
$$\begin{aligned}
h_t&=\operatorname{div}g-\operatorname{div}v+|g|^2-v\cdot g\\
&=-\operatorname{div}u-u\cdot g.
\end{aligned}$$
 Taking the gradient and multiplying by $\rho$, we obtain$$\rho g_t+\rho\nabla(u\cdot g)+\rho\nabla\operatorname{div}u=0.$$
It is noted that $$g=\nabla h\implies \nabla g=(\partial_j g_i)_{ij}=(\partial_j\partial_i h)_{ij}=(\partial_i\partial_j h)_{ij}=(\nabla g)^\top,$$and $\rho g_j=\rho(\nabla\log\rho)_j=\partial_j\rho,$ which gives
\begin{equation}
\begin{aligned}
\rho\nabla(u\cdot g)+\rho\nabla\operatorname{div}u
&=\left[ \rho\cdot\partial_i(u_j g_j)+\rho\partial_i(\operatorname{div}u) \right] _{i}\\
&=\left[ \partial_i u_j{\rho g_j}_{}
+\rho u_j\partial_i g_j+\rho\partial_i\operatorname{div}u \right] _{i}\\
&=\left[ \rho u_j\partial_j g_i+\rho\partial_i\operatorname{div}u+\partial_i u_j\partial_j\rho \right] _{i}\\
&=\rho\nabla g\cdot u+\operatorname{div}\bigl(\rho(\nabla u)^\top\bigr).
\end{aligned}
\end{equation}
Hence, we arrive at 
\begin{equation}\label{eq: g tmp}
    \rho(g_t+(u\cdot\nabla)g)=-\operatorname{div}\bigl(\rho(\nabla u)^\top\bigr).
\end{equation}
Subtracting \eqref{eq: g tmp} from \eqref{eq: v tmp} , and using $u=v-g$, we get
$$
\begin{aligned} \rho(u_t+u\cdot\nabla u)+\nabla(\rho^\gamma) &=\operatorname{div}(\rho \nabla v) +\operatorname{div}(\rho(\nabla u)^{\top})\\ &=\operatorname{div}(2\rho\mathcal{D} u) +\operatorname{div}(\rho \nabla g)\\ &=\operatorname{div}(2\rho\mathcal{D} u) +\operatorname{div}(\rho\nabla^2\log\rho). \end{aligned}$$
Using \eqref{eq: rho}, we deduce 
\begin{equation}
    (\rho u)_t+\mathrm{div}(\rho u\otimes u)+\nabla(\rho^\gamma)=\mathrm{div}(2\rho\mathcal{D}u)+\mathrm{div}(\rho\nabla^2\log\rho).
\end{equation}
Next we verify the far-field property. 
Since $H^2,H^3\hookrightarrow C_0$ in $\mathbb{R}^N$ ($N=2,3$), we have
$$
\rho\in C([0,T];H^3),\quad u\in C([0,T];H^2)
\implies
\lim_{|x|\to\infty}\rho(t,x)=0,\quad
\lim_{|x|\to\infty}u(t,x)=0.
$$
The uniformity in $t\in[0,T]$ follows because $t\mapsto\rho(t),u(t)$ are continuous maps from $[0,T]$ into $H^3,H^2$, respectively. And $H^2,H^3\hookrightarrow C_0$ are continuous embeddings, so the images $\{\rho(t):t\in[0,T]\}$, $\{u(t):t\in[0,T]\}\subset C_0$ are compact, which implies that they can be covered by finitely many balls in the uniform norm.

\textbf{Step 6: Uniqueness of local solutions}

Let $(\rho_1,u_1)$ and $(\rho_2,u_2)$ be any two solutions satisfying regularity \eqref{regular of local sol},  and define the auxiliary variables $$h_i=\log\rho_i,\quad g_i=\nabla h_i,\quad v_i=u_i+g_i,\quad
F_i=|g_i|^2-v_i\cdot g_i,\quad
\mathfrak{p}_i=\gamma\rho_i^{\gamma-1}g_i\quad(i=1,2).$$
Take $I=[t_0,t_0+\tau]$, and suppose the two solutions coincide at $t_0$. Let $X_I$ denote the time-translated version of $X_\tau$, with zero initial trace at $t_0$ and the same norm taken over $I$. Then 
\begin{equation}\label{eq: difference of sol}
    \begin{cases}
(v_1-v_2)_t-\Delta(v_1-v_2)
=-\bigl[(v_1-2g_1)\cdot\nabla v_1-(v_2-2g_2)\cdot\nabla v_2\bigr]
-(\mathfrak{p}_1-\mathfrak{p}_2),\\
(g_1-g_2)_t-\Delta(g_1-g_2)+\nabla\operatorname{div}(v_1-v_2)=\nabla(F_1-F_2),\\
(v_1-v_2)(t_0)=0,\quad (g_1-g_2)(t_0)=0,
\end{cases}
\end{equation}
and
$$
(v_i-v_0,g_i-g_0)\in C([0,T];H^2),\quad
\nabla v_i,\nabla g_i\in L^2(0,T;H^2)
\implies
(v_1-v_2,g_1-g_2)\in L^2(0,T;H^3).
$$
Next we estimate the $L^2(I;H^1)$ norm of the right hand side of \eqref{eq: difference of sol}. Since $\left\lVert (v_i,g_i) \right\rVert_{C([0,T];\mathcal{B})}$ are uniformly bounded, together with Lemma \ref{lem: Product Estimates} and the estimate$$\|f\|_{\mathcal{B}}=\|f\|_{L^\infty}+\|\nabla f\|_{H^1}\le C\|f\|_{H^2}\text{ in }\mathbb{R}^N,N=2,3,$$
we have 
\begin{equation}\label{est: diff eq RHS 1}
    \begin{aligned} &\left\lVert (v_1-2g_1)\cdot\nabla v_1-(v_2-2g_2)\cdot\nabla v_2 \right\rVert _{C(I;H^{1})}\\ =&\left\lVert \left[ (v_1-v_2)-2(g_1-g_2) \right] \cdot\nabla v_1 +(v_2-2g_2)\cdot\nabla(v_1-v_2) \right\rVert_{C(I;H^{1})} \\
\leq&C\left( \left\lVert v_{1}-v_{2} \right\rVert_{C(I;\mathcal{B})}  +\left\lVert g_{1}-g_{2} \right\rVert _{C(I;\mathcal{B})} \right) \left\lVert v_{1} \right\rVert _{C(I;\mathcal{B})}+C\left( \left\lVert v_{2} \right\rVert _{C(I;\mathcal{B})}+\left\lVert g_{2} \right\rVert _{C(I;\mathcal{B})} \right) \left\lVert v_{1}-v_{2} \right\rVert _{C(I;\mathcal{B})}\\
\le& C\|(v_1-v_2,g_1-g_2)\|_{X_I}. 
 \end{aligned}
\end{equation}
Applying Lemma \ref{lem: Product Estimates}, 
\begin{equation}\label{est: diff eq RHS 2}
    \|\nabla(F_1-F_2)\|_{C(I;H^1)}+\|F_1-F_2\|_{C(I;L^\infty)}
\le C\|(v_1-v_2,g_1-g_2)\|_{X_I}.
\end{equation}
For the pressure term, \eqref{est: pressure L2} and \eqref{est: pressure H1} give 
\begin{equation}\label{est: diff eq RHS 3}
    \begin{aligned}
\|\mathfrak{p}_1-\mathfrak{p}_2\|_{C(I;H^1)}
&\le C(\|g_1-g_2\|_{C(I;H^1)}+\|h_1-h_2\|_{C(I;L^\infty)})\\
&\le C(\|(v_1-v_2,g_1-g_2)\|_{X_I}+\|h_1-h_2\|_{C(I;L^\infty)}).
\end{aligned}
\end{equation}
Since
$$
(\rho_i)_t+\operatorname{div}(\rho_i u_i)=0
\implies
(h_i)_t=-\operatorname{div}u_i-u_i\cdot g_i
=\operatorname{div}g_i-\operatorname{div}v_i+F_i,
$$
and $\nabla v_i,$ $\nabla g_i\in L^2(0,T;H^2)$, $F_i=-u_i\cdot g_i\in C([0,T];H^2)$, we have
$$
(h_i)_t=\operatorname{div}g_i-\operatorname{div}v_i+F_i\in L^2(0,T;H^2)\subset L^1(0,T;H^2).
$$
Consequently, $$h_i(t)-h_i(t_0)=\int_{t_0}^t(\operatorname{div}g_i-\operatorname{div}v_i+F_i)\,ds.$$
As $\rho_1(t_0)=\rho_2(t_0)>0$, we have $h_1(t_0)=h_2(t_0)$, and hence
\begin{equation}\label{est: diff eq RHS 4}
    \begin{aligned}
\|h_1(t)-h_2(t)\|_{C(I;L^\infty)}
&=\left\|\int_{t_0}^t[\operatorname{div}(g_1-g_2)-\operatorname{div}(v_1-v_2)+F_1-F_2]\,ds\right\|_{C(I;L^\infty)}\\
&\le C\left(\int_{t_0}^{t_0+\tau}\|\nabla(g_1-g_2,v_1-v_2)\|_{L^\infty}+\|F_1-F_2\|_{L^\infty}\,ds\right)\\
&\le C\left(\int_{t_0}^{t_0+\tau}\|(g_1-g_2,v_1-v_2)\|_{H^3}+\|F_1-F_2\|_{L^\infty}\,ds\right)\\
&\le C\sqrt{\tau}\|(g_1-g_2,v_1-v_2)\|_{L^2(I;H^3)}+C\tau\|F_1-F_2\|_{C(I;L^\infty)}\\
&\le C(\sqrt{\tau}+\tau)\|(v_1-v_2,g_1-g_2)\|_{X_I}.
\end{aligned}
\end{equation}
In view of Lemma \ref{lem: sol of linear eq} and \eqref{est: diff eq RHS 1}-\eqref{est: diff eq RHS 4}, we obtain 
\begin{equation}
    \begin{aligned}
\|(v_1-v_2,g_1-g_2)\|_{X_I}
&\le C(\|(v_1-2g_1)\cdot\nabla v_1-(v_2-2g_2)\cdot\nabla v_2\|_{L^2(I;H^1)}
+\|\mathfrak{p}_1-\mathfrak{p}_2\|_{L^2(I;H^1)}\\
&\qquad+\|\nabla(F_1-F_2)\|_{L^2(I;H^1)})\\
&\le C\sqrt{\tau}(\|(v_1-2g_1)\cdot\nabla v_1-(v_2-2g_2)\cdot\nabla v_2\|_{C(I;H^1)}
+\|\mathfrak{p}_1-\mathfrak{p}_2\|_{C(I;H^1)}\\
&\qquad+\|\nabla(F_1-F_2)\|_{C(I;H^1)})\\
&\le C\sqrt{\tau}(1+\sqrt{\tau}+\tau)\|(v_1-v_2,g_1-g_2)\|_{X_I}.
\end{aligned}
\end{equation}
Choose $\tau\ll1$ such that $C\sqrt{\tau}(1+\sqrt{\tau}+\tau)<1$, then $\|(v_1-v_2,g_1-g_2)\|_{X_I}=0$, hence
$$
v_1=v_2,\quad g_1=g_2,\quad h_1=h_2
\implies \rho_1=\rho_2,\quad u_1=u_2 \text{ on }[t_0,t_0+\tau].
$$
Taking $t_0+\tau$ as the new starting point and repeating the above argument, since the constants in the estimates depend only on the finite norms of the two solutions on $[0,T]$, the same short time length $\tau$ can be chosen, and finitely many steps suffice to cover $[0,T]$.
\end{proof}
\subsection{Proof of Lemma \ref{lem:local-isothermal} }
\label{subsec:local-isothermal}

We prove Lemma~\ref{lem:local-isothermal} by modifying the reference
fields in the preceding construction. The pressure acceleration
is $g$, whose initial value need not belong to $L^2$. We therefore
use the decomposition $-g=-g_{0}-(g-g_{0})$ and solve
\begin{equation}\label{eq:iso-local-reference}
\begin{cases}
\bar v_t-\Delta\bar v=-g_{0},\\
\bar g_t-\Delta\bar g+\nabla\operatorname{div}\bar v=0,\\
(\bar v,\bar g)(0)=(v_{0},g_{0}).
\end{cases}
\end{equation}
After differentiation, the additional source is
$-\partial_k g_{0}\in H^1$. The derivative construction in Step~2
of the preceding proof, with this source included, gives
\[
\nabla\bar v,\nabla\bar g
\in C([0,T];H^1)\cap L^2(0,T;H^2)
\]
with norm at most $C_N \tilde{A}_{0}$ for $T\le1$. Moreover,
\begin{equation}\label{eq:iso-local-reference-differences}
\begin{aligned}
\bar v-v_{0}+t g_{0}&=\int_0^t\Delta\bar v(s)\,ds,\\
\bar g-g_{0}&=\int_0^t
 (\Delta\bar g-\nabla\operatorname{div}\bar v)(s)\,ds.
\end{aligned}
\end{equation}
Arguing as in the derivation of \eqref{est: reference field}, with \(\bar v-v_0\) replaced by \(\bar v-v_{0}+tg_{0}\), and using \(T\le1\), we obtain
\begin{equation}\label{eq:iso-local-reference-bound}
\begin{aligned}
&\|\bar v\|_{C([0,T];\mathcal B)}
 +\|\bar g\|_{C([0,T];\mathcal B)}
 +\|\nabla\bar v\|_{L^2(0,T;H^2)}
 +\|\nabla\bar g\|_{L^2(0,T;H^2)}\\
 +&\|\bar v-v_{0}+t g_{0}\|_{C([0,T];H^2)}
 +\|\bar g-g_{0}\|_{C([0,T];H^2)}
 \le C_N \tilde{A}_{0}.
\end{aligned}
\end{equation}
The additional source requires only \(\nabla g_{0}\in H^1\) and no \(L^2\) assumption on \(g_{0}\) is needed.

On the closed unit ball of $X_T$, write $v=\bar v+z$ and
$g=\bar g+w$, and define $(z',w')$ by
\begin{equation}\label{eq:iso-local-correction}
\begin{cases}
z'_t-\Delta z'=-(v-2g)\cdot\nabla v-(g-g_{0}),\\
w'_t-\Delta w'+\nabla\operatorname{div}z'
 =\nabla F(v,g),\qquad F(v,g)=|g|^2-v\cdot g,\\
(z',w')(0)=0.
\end{cases}
\end{equation}
Here $g-g_{0}=(\bar g-g_{0})+w\in C([0,T];H^2)$.
For two inputs, the difference of this pressure source is simply
$w_1-w_2$. Lemmas~\ref{lem: Product Estimates} and
\ref{lem: sol of linear eq} therefore give
\begin{align*}
\|(z',w')\|_{X_T}&\le C(\tilde{A}_{0})\sqrt{T},\\
\|(z'_1-z'_2,w'_1-w'_2)\|_{X_{T}}
 &\le C(\tilde{A}_{0})\sqrt{T}\,
       \|(z_1-z_2,w_1-w_2)\|_{X_{T}}.
\end{align*}
For sufficiently small ${T}={T}(N,\tilde{A}_{0})$, this map sends the closed unit ball of \(X_{T}\) into itself and is a contraction. Hence it has a unique fixed point. Let $(z,w)\in X_{T}$ be the fixed point. By the reference-field estimate \eqref{eq:iso-local-reference-bound} and the definition of $X_{T}$, $v=\bar v+z$ and $g=\bar g+w$ satisfy $$v,g\in C([0,{T}];\mathcal B),\qquad \nabla v,\nabla g \in C([0,{T}];H^1)\cap L^2(0,{T};H^2).$$The scalar variable $q$ is unnecessary here,
since the effective equations for $v,g$ are independent of $\rho$.

At the fixed point, the curl of $g$ solves the heat equation
with zero initial data, and hence $\operatorname{curl}g=0$.
Set $h_{0}:=\log\rho_{0}$ and define
\begin{equation}\label{eq:iso-local-log-recovery}
h=h_{0}+\int_0^t
\bigl(\operatorname{div}g-\operatorname{div}v+F(v,g)\bigr)(s)\,ds.
\end{equation}
The divergence terms belong to $L^2(0,{T};H^2)$ and
$F(v,g)\in C([0,{T}];BUC)$, so $h-h_{0}\in C([0,{T}];BUC)$ and is bounded by a constant depending only
on $N,\tilde{A}_{0}$. Taking the gradient in
\eqref{eq:iso-local-log-recovery}, using
$\nabla\operatorname{div}g=\Delta g$ and the equation for $g$,
gives $\nabla h=g$. Let \(\rho=e^h=\rho_{0}e^{h-h_{0}}\). Since \(h-h_{0}\in C([0,{T}];BUC)\) is bounded, and $\rho_{0}\in H^{3}\hookrightarrow L^2\cap BUC,$ we have $\rho\in C([0,{T}];L^2\cap L^\infty),$ and $0<e^{-C}\rho_{0}\le\rho\le e^C\rho_{0},$ where \(C\) depends only on \(N,\tilde{A}_{0}\). Moreover, \[
h_t=\operatorname{div}g-\operatorname{div}v+F
\in L^2(0,{T};BUC).
\]Since \(h_{0}\in H^3_{\mathrm{loc}}\) and \(\nabla h=g\), the Sobolev chain rule applies locally in space, as in $\gamma>1$ case, and yields
\[
\nabla\rho=\rho g,\qquad
\rho_t=\rho h_t,\qquad
\Delta\rho=\rho\bigl(\operatorname{div}g+|g|^2\bigr).
\]These identities give \(\nabla\rho\in C([0,{T}];L^2)\) and \(\Delta\rho\in L^2(0,{T};L^2)\). For the time derivative, we use $\|\rho_t\|_{L^2(0,{T};L^2)}
\le
\|\rho\|_{C([0,{T}];L^2)}
\|h_t\|_{L^2(0,{T};L^\infty)}
<\infty.$ The same calculation as in \eqref{eq: rho in auxi form}, gives
\[
\rho_t-\Delta\rho=-\operatorname{div}(\rho v)
\quad\text{in }L^2(0,{T};L^2).
\]Applying Lemma \ref{lem: sol of linear eq}(ii) and repeating the uniqueness argument following  \eqref{eq: rho in auxi form} we obtain
\[
\rho\in C([0,{T}];H^3)\cap L^2(0,{T};H^4),
\]with the same estimate as \eqref{est: ho regular of rho}. The argument for \(\rho_t\) in \eqref{regular of v_t g_t rho_t} then gives $\rho_t\in C([0,{T}];H^1)\cap L^2(0,{T};H^2).$ This argument does not require $F\in L^2.$

By the reference-field estimate \eqref{eq:iso-local-reference-bound} and $(z,w)\in X_{T},$ we have $\bar v-v_{0}+tg_{0},$ $\bar g-g_{0},$ $z, w\in C([0,{T}];H^2).$ Consequently, 
$$
\begin{aligned} v-v_{0}+tg_{0}&=(\bar v-v_{0}+tg_{0})+z \in C([0,{T}];H^2),\\ g-g_{0}&=(\bar g-g_{0})+w \in C([0,{T}];H^2). \end{aligned}
$$
For the time regularity, arguing as in the proof of \eqref{regular of v_t g_t rho_t}, but using $g-g_{0}\in C([0,{T}];H^2)$ in place of the pressure term, the equations yield
$$
\begin{aligned} v_t+g_{0}&=\Delta v-(v-2g)\cdot\nabla v-(g-g_{0}) \in C([0,{T}];L^2)\cap L^2(0,{T};H^1),\\ g_t&=\Delta g-\nabla\operatorname{div}v+\nabla F \in C([0,{T}];L^2)\cap L^2(0,{T};H^1). \end{aligned}
$$
Set $u=v-g$ and $u_{0}=v_{0}-g_{0}$. Then we have $u-u_{0}+tg_{0} =(v-v_{0}+tg_{0})-(g-g_{0}) \in C([0,{T}];H^2)$ and $u_t+g_{0}=(v_t+g_{0})-g_t \in C([0,{T}];L^2)\cap L^2(0,{T};H^1).$

The verification of the original equations is exactly
the calculation in Step~5.2 above, with $\rho^\gamma=\rho$.
If $v_{0},g_{0}\in C_0$, the identities $ v=v_{0}-tg_{0}+(v-v_{0}+tg_{0})$ and $g=g_{0}+(g-g_{0}),$ together with $H^2\hookrightarrow C_0$ give uniform far-field limits $v,g,u\in C([0,{T}];C_0).$ Since $\rho\in C([0,{T}];H^3)$, we also have  
$\rho\in C([0,{T}];C_0)$. The compactness argument gives$$
\lim_{R\to\infty} \sup_{\substack{0\le t\le{T}\\ |x|\ge R}} \bigl(|\rho(t,x)|+|u(t,x)|+|v(t,x)|+|g(t,x)|\bigr)=0.$$

For uniqueness, let two solutions with the same data coincide at $t_0$, and set $I=[t_0,t_0+\tau]\subset[0,T]$. Since $v_i-v_0+tg_0,$ $g_i-g_0\in CH^2,$ $\nabla v_i,\nabla g_i\in L^2H^2,$ their differences belong to $X_I$, with zero initial values at $t_0$. The difference equations are \eqref{eq: difference of sol}, with the pressure difference replaced by $g_1-g_2$. Estimates \eqref{est: diff eq RHS 1}, \eqref{est: diff eq RHS 2} remain valid, and $\|g_1-g_2\|_{C(I;H^1)} \le C\|(v_1-v_2,g_1-g_2)\|_{X_I}.$ Hence Lemma \ref{lem: sol of linear eq} give
$$\|(v_1-v_2,g_1-g_2)\|_{X_I} \le C\sqrt\tau\, \|(v_1-v_2,g_1-g_2)\|_{X_I}.$$
Choosing $C\sqrt{\tau}<1$ gives $v_1=v_2$ and $g_1=g_2$ on $I$. \eqref{eq:iso-local-log-recovery} then gives equality of $\rho,u$. A uniform choice of $\tau$ and finitely many intervals prove uniqueness on $[0,T]$.


	\section*{Acknowledgments}
	X. Huang is partially supported by Chinese Academy of Sciences Project for Young Scientists in Basic Research (Grant No. YSBR-031), National Natural Science Foundation of China (Grant Nos. 12494542, 11688101) and National Key R\&D Program of China (Grant No. 2021YFA1000801). 
	\vspace{1cm}
    
	\noindent\textbf{Data availability statement.} Data sharing is not applicable to this article.
	
	\vspace{0.3cm}
	\noindent\textbf{Conflict of interest.} The authors declare that they have no conflict of interest.


\begin{thebibliography}{99}
\bibitem{Antonelli-Bresch-Spirito}
P. Antonelli, D. Bresch, S. Spirito, Global weak solutions of the Navier--Stokes--Korteweg equations in one dimension, arXiv:2502.17147 (2025).

\bibitem{Bresch-Vasseur-Yu}
D. Bresch, A. F. Vasseur, C. Yu, Global existence of entropy-weak solutions to the compressible
Navier--Stokes equations with nonlinear density dependent viscosities, J. Eur. Math. Soc. \textbf{24} (2022), no.~5, 1791--1837.

\bibitem{Charve-Haspot}
F. Charve, B. Haspot, Existence of a global strong solution and vanishing capillarity-viscosity
limit in one dimension for the Korteweg system, SIAM J. Math. Anal. \textbf{45} (2013), no.~2, 469--494.

\bibitem{Chen-Zhao}
Z.~Chen and H.~Zhao,
Global existence and vanishing dispersion limit of
strong/classical solutions to the one-dimensional compressible
quantum Navier--Stokes equations with large initial data,
J. Math. Fluid Mech. \textbf{27} (2025), no.~4,
Paper No.~69.

\bibitem{Chen-Zhang-Zhu}
G.-Q. Chen, J. Zhang, S. Zhu, Global regular solutions of the degenerate
compressible Navier--Stokes equations with large initial data of spherical symmetry,
arXiv:2512.18545 (2025).

\bibitem{Dunn-Serrin}
J. E. Dunn, J. Serrin, On the thermomechanics of interstitial working, Arch. Rational
Mech. Anal. \textbf{88} (1985), no.~2, 95--133.

\bibitem{Danchin-Desjardins}
R. Danchin, B. Desjardins, Existence of solutions for compressible fluid models of Korteweg type, Ann. Inst. H. Poincar\'e C Anal. Non Lin\'eaire \textbf{18} (2001), no.~1, 97--133.

\bibitem{Gu-Huang}
Y. Gu, X. Huang, Global spherically symmetric classical solutions for arbitrary large initial data of the multi-dimensional non-isentropic compressible Navier--Stokes equations,
arXiv:2512.24799 (2025).

\bibitem{Gu-Huang-Meng-Zhou}
Y. Gu, X. Huang, W. Meng, H. Zhou, Global-in-time strong solutions for the 2D and
3D generalized compressible Navier--Stokes--Korteweg system with arbitrarily large initial
data, arXiv:2603.11762 (2026).

\bibitem{Germain-LeFloch}
P. Germain, P. LeFloch, Finite energy method for compressible fluids: the Navier--Stokes--Korteweg
model, Comm. Pure Appl. Math. \textbf{69} (2016), no.~1, 3--61.

\bibitem{Haspot-large-energy}
B.~Haspot,
Existence of global strong solution for Korteweg system
with large infinite energy initial data,
J. Math. Anal. Appl. \textbf{438} (2016), no.~1, 395--443.

\bibitem{Haspot-quantum}
B.~Haspot,
Global strong solution for the Korteweg system with
quantum pressure in dimension $N\geq2$,
Math. Ann. \textbf{367} (2017), no.~1--2, 667--700.

\bibitem{Hattori-Li-1}
H. Hattori, D. Li, Solutions for two-dimensional system for materials of Korteweg type, SIAM J. Math. Anal. \textbf{25} (1994), no.~1, 85--98.

\bibitem{Hattori-Li-2}
H. Hattori, D. Li, Global solutions of a high-dimensional system for Korteweg materials, J. Math. Anal. Appl. \textbf{198} (1996), no.~1, 84--97.

\bibitem{Huang-Li}
X. Huang, B. Li, Sharp sign criterion for global existence and blow-up of strong
solutions with arbitrarily large initial data in the 3D spherically
symmetric compressible Navier--Stokes equations, arXiv:2609.11304 (2026).

\bibitem{Huang-Meng-Zhang-111}
X. Huang, W. Meng, X. Zhang, On global classical and weak solutions with arbitrary large initial data to the multi-dimensional viscous Saint--Venant system and compressible
Navier--Stokes equations subject to the BD entropy condition under spherical symmetry,
arXiv:2512.15029 (2025).

\bibitem{Huang-Meng-Zhang}
X. Huang, W. Meng, X. Zhang, Global regularity of the multi-dimensional compressible
Navier--Stokes--Korteweg system with large initial data, arXiv:2602.00455 (2026).

\bibitem{Huang-Gu-Lei}
X. Huang, Y. Gu, M. Lei, Global strong solutions with large initial data for the
Cauchy problem of the multi-dimensional compressible Navier--Stokes--Korteweg system,
arXiv:2602.10700 (2026).

\bibitem{Huang-Lei-Zhou}
X. Huang, M. Lei, H. Zhou, On the Cauchy problem for the multi-dimensional compressible
Navier--Stokes--Korteweg system: Global strong solutions with arbitrarily large initial data,
arXiv:2604.23652 (2026).

\bibitem{Jungel-effective}
A.~J\"ungel,
Effective velocity in compressible Navier--Stokes
equations with third-order derivatives,
Nonlinear Anal. \textbf{74} (2011), no.~8, 2813--2818.

\bibitem{Korteweg}
D. J. Korteweg, Sur la forme que prennent les \'equations du mouvement des fluides
si l'on tient compte des forces capillaires caus\'ees par des variations de densit\'e
consid\'erables mais continues et sur la th\'eorie de la capillarit\'e dans l'hypoth\`ese
d'une variation continue de la densit\'e,
Arch. N\'eer. Sci. Exactes S\'er. II \textbf{6} (1901), 1--24.

\bibitem{Kotschote}
M.~Kotschote, Strong solutions for a compressible fluid model
of Korteweg type,
Ann. Inst. H. Poincar\'e Anal. Non Lin\'eaire
\textbf{25} (2008), no.~4, 679--696.

\bibitem{Li-Xin}
J. Li, Z. P. Xin, Global existence of weak solutions to the barotropic compressible Navier--Stokes
flows with degenerate viscosities, arXiv:1504.06826 (2015).

\bibitem{Vasseur-Yu}
A. F. Vasseur, C. Yu, Existence of global weak solutions for 3D degenerate compressible Navier--Stokes equations, Invent. Math. \textbf{206} (2016), no.~3, 935--974.
\end{thebibliography}
\end{document}